\newtheorem*{thrm}{Theorem}
\newtheorem*{lemma}{Lemma}
\newtheorem{thrm*}{Theorem}
\newtheorem{lemma*}[thrm*]{Lemma}
\newtheorem{prop}[thrm*]{Proposition}
\newtheorem{defn}[thrm*]{Definition}
\newcommand{\mcg}{\operatorname{\ensuremath{MCG}}}
\newcommand{\stab}{\operatorname{Stab}}
\newcommand{\sym}{\operatorname{Sym}}
\newcommand{\LL}{(L_1,\dots,L_n)}
\newcommand{\caA}{\mathcal{A}}
\newcommand{\caB}{\mathcal{B}}
\newcommand{\caC}{\mathcal{C}}
\newcommand{\caD}{\mathcal{D}}
\newcommand{\caH}{\mathcal{H}}
\newcommand{\caP}{\mathcal{P}}
\newcommand{\caM}{\mathcal{M}}
\newcommand{\caR}{\mathcal{R}}
\newcommand{\caS}{\mathcal{S}}
\newcommand{\caT}{\mathcal{T}}
\newcommand{\Tg}{\mathcal{T}_g}
\newcommand{\Tgn}{\mathcal{T}_{g,n}}
\newcommand{\TgL}{\mathcal{T}_g(L)}
\newcommand{\Mg}{\mathcal{M}_g}
\newcommand{\Mgn}{\mathcal{M}_{g,n}}
\newcommand{\Tbar}{\overline{\mathcal{T}}}
\newcommand{\Tbarg}{\overline{\mathcal{T}}_g}
\newcommand{\Tbargn}{\overline{\mathcal{T}}_{g,n}}
\newcommand{\That}{\widehat{\mathcal{T}}_{g,n}}
\newcommand{\Mbar}{\overline{\mathcal{M}}}
\newcommand{\Mbargn}{\overline{\mathcal{M}}_{g,n}}
\newcommand{\MbargL}{\overline{\mathcal{M}}_g(L)}
\newcommand{\Cbar}{\overline{\mathcal{C}}}
\newcommand{\Cg}{\mathcal{C}_g}
\newcommand{\Cgn}{\mathcal{C}_{g,n}}
\newcommand{\omTL}{\omega_{\TgL}}
\newcommand{\omThat}{\omega_{\That}}
\newcommand{\lla}{\ell_{\alpha}}
\begin{document}
 
\title{Lectures and notes: Mirzakhani's volume recursion and approach for the Witten-Kontsevich theorem on moduli tautological intersection numbers}         % Enter your title between curly braces
\author{Scott A. Wolpert\footnote{Partially supported by National Science Foundation grant DMS - 1005852.}}        % Enter your name between curly braces
\date{\today}          % Enter your date or \today between curly braces
\maketitle
\vspace{-.1in}

\begin{figure}[htbp] % float placement: (h)ere, page (t)op, page (b)ottom, other (p)age
  \centering
  % file name: C:/Documents and Settings/Scott/Desktop/MyFiles/CBMS/CBMSbook/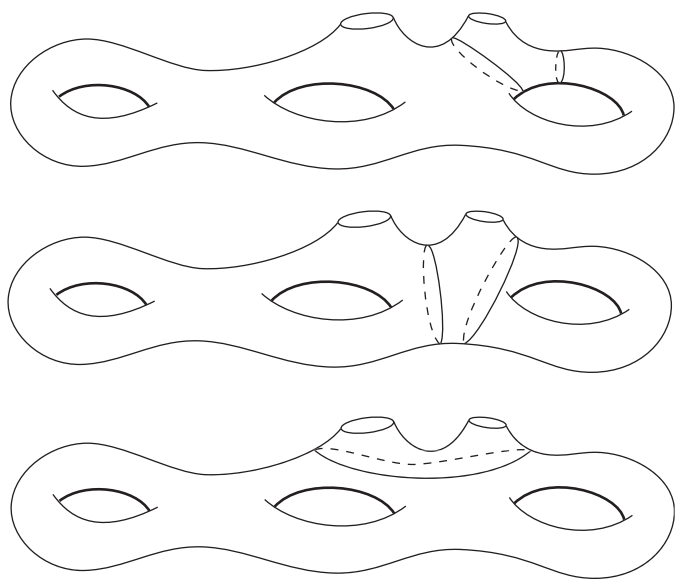
  \includegraphics[bb=0 0 509 432,width=3.5in,height=2.97in,keepaspectratio]{recursion}
  \caption{The boundary pants configurations for the length identity.}
  \label{fig:recursion}
\end{figure}

The following materials were presented in a short course at the 2011 Park City Mathematics Institute, Graduate Summer School on Moduli Spaces of Riemann Surfaces.  Brad Safnuk assisted in the preparation for and running of the course.  It is my pleasure to thank Brad for his assistance.  

\newpage
\section{Introduction.}
\noindent {\bfseries The papers.}

\noindent Simple geodesics and Weil-Petersson volumes of moduli spaces of bordered Riemann surfaces. Invent. Math., 167(1):179-222, 2007.
\vspace{.01in}

\noindent Weil-Petersson volumes and intersection theory on the moduli space of curves. J. Amer. Math. Soc., 20(1):1-23, 2007. 
\vspace{.1in}

\noindent {\bfseries The goals of the papers.} 

\noindent 1. Derive an explicit recursion for  WP moduli space volume polynomials.

\noindent 2. Apply symplectic reduction to show that the polynomial coefficients are intersection numbers.

\noindent 3. Show that the volume recursion satisfies the Virasoro relations - Witten's conjecture.

\vspace{.1in} 

The trio of Maryam Mirzakhani papers \cite{Mirvol,Mirwitt, Mirgrow} are distinguished for involving a large number of highly developed considerations.  The first work requires a detailed description of Teichm\"{u}ller space, the action of the mapping class group, formulas for Weil-Petersson (WP) symplectic geometry, classification of simple geodesic arcs on a pair of pants, the length infinite sum identity and exact calculations of integrals.  The second work involves a description of the moduli space tautological classes $\kappa_1$ and $\psi$, as characteristic classes for $S^1$ principal bundles in hyperbolic geometry, the moment map and exact symplectic reduction, as well as combinatorial calculations.  The third work uses the $PL$ structure of Thurston's space of measured geodesic laminations $\mathcal{MGL}$, the train-track symplectic form and Masur's result that the mapping class group acts ergodically on $\mathcal{MGL}$.  A fine feature of the works is that they showcase important aspects of the geometry, topology and deformation theory of Riemann/hyperbolic surfaces.  Mirzakhani's recursion for volume is applied in all three works and in a current preprint.   A discussion placing Mirzakhani's work into the context of approaches to Witten-Kontsevich theory is given in the first section of the Mulase-Safnuk paper \cite{SafMu}.  A brief exposition of Kontsevich's original solution \cite{Kont} of Witten's conjecture, including the basic geometry of tautological classes on moduli space and the ribbon graph expansion of matrix integrals is given in the Bourbaki Seminar of Looijenga \cite{Looij}.  A brief exposition of Mirzakhani's volume recursion, solution of Witten-Kontsevich and applications of WP volume limits are given in \cite{Dosurv}.  An overall exposition of Mirzakhani's {\em prime simple geodesic theorem} \cite{Mirgrow} is given in \cite[Chaps. 9, 10]{Wlcbms}.  Mirzakhani's work is just one part of a subject with high activity and many active researchers; Google Scholar shows 200 citations to the three Mirzakhani works, and much more generally over 1100 citations to Witten's original papers \cite{Witt1,Witt2} on two-dimensional gravity and gauge theories.  

%MathSciNet shows over 50 citations to the three Mirzakhani works, and much more generally over 250 citations to Witten's original papers \cite{Witt1,Witt2} on two-dimensional gravity and gauge theories.  
\vspace{.1in}

\noindent First paper brief. A symplectic fibered product decomposition for covers of the moduli space of bordered Riemann/hyperbolic surfaces is combined with the $d\ell\wedge d\tau$ formula for the symplectic form and a universal identity for sums of geodesic-lengths to derive an explicit recursion for computing volume.  The volume of the moduli space of genus $g$, $n$ boundaries surfaces  is shown to be a polynomial with positive coefficients in surface boundary lengths of total degree $6g-6+2n$.
\vspace{.1in}

\noindent Second paper brief.  Twist Hamiltonian flows are combined with the boundary length moment map to apply symplectic reduction for the family of moduli spaces of bordered hyperbolic surfaces.   The consequence is that the coefficients of the volume polynomials are moduli space characteristic numbers.  A geometric construction shows that the characteristic numbers are tautological intersection numbers. The volume recursion is shown to satisfy Virasoro algebra constraints. 

\vspace{.1in}
The following theorems are the immediate take away results of the papers. 

\begin{thrm} \textup{The WP volume polynomials.}  The volume polynomials are determined recursively from the volume polynomials of smaller total degree, \cite[Formula (5.1) \& Theorem 8.1]{Mirvol}.   The volume $V_{g,n}(L_1,\dots,L_n)$ of the moduli space of genus $g$, $n$ boundaries, hyperbolic surfaces with boundary lengths $L=(L_1,\dots,L_n)$ is a polynomial
\[
V_{g,n}(L)\,=\,\sum_{\stackrel{\alpha}{|\alpha|\le 3g-3+n}} C_{\alpha}\,L^{2\alpha},
\]
for multi index $\alpha=(\alpha_1,\dots,\alpha_n)$ and where $C_{\alpha}>0$ lies in $\pi^{6g-6+2n-2|\alpha|}\mathbb Q$, \cite[Theorems 1.1 \& 6.1]{Mirvol}.  The coefficients are intersection numbers given as 
\[
C_{\alpha}=\frac{2^{\delta_{1g}\delta_{1n}}}{2^{|\alpha|}\alpha!(3g-3+n-|\alpha|)!}
\,\int_{\overline{\caM}_{g,n}}\psi_1^{\alpha_1}\cdots\psi_n^{\alpha_n}\omega^{3g-3+n-|\alpha|},
\]
where $\psi_j$ is the Chern class for the cotangent line along the $j^{th}$ puncture, $\omega$ is the symplectic form, $\alpha!=\prod_{j=1}^n\alpha_j!$, and $\delta_{**}$ is the Kronecker indicator delta, \cite[Theorem 4.4]{Mirwitt}. 
\end{thrm}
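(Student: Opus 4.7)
I plan to establish the three assertions in sequence: the polynomial form of $V_{g,n}(L)$ together with the recursion, and then the intersection-number formula for the coefficients. The base cases $\mathcal{M}_{0,3}$ (a point) and $\mathcal{M}_{1,1}$ (one-dimensional, with hyperelliptic orbifold structure) are computed directly by hand, yielding $V_{0,3}\equiv 1$ and $V_{1,1}(L)=(L^2+4\pi^2)/48$.

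For the recursion I would start from Mirzakhani's boundary-length identity: on a bordered hyperbolic surface of type $(g,n)$ with distinguished boundary $\beta_1$ of length $L_1$, the number $L_1$ is written as an absolutely convergent sum of explicit functions $\mathcal{D}(L_1,\ell_\alpha,\ell_{\alpha'})$ and $\mathcal{R}(L_1,L_j,\ell_\gamma)$, indexed by isotopy classes of embedded pairs of pants having $\beta_1$ as one boundary component. Integrating this pointwise identity against the WP volume form over $\mathcal{M}_{g,n}(L)$ and unfolding each mapping class group orbit sum to an integral over the intermediate cover $\mathcal{T}_{g,n}/\operatorname{Stab}(\gamma)$, one uses the Fenchel-Nielsen formula $\omega_{WP}=\sum d\ell_\gamma\wedge d\tau_\gamma$ to perform the $\tau$-integral and express each term as
\[
\int_0^\infty \ell\cdot\bigl[\mathcal{D}\text{ or }\mathcal{R}\bigr](L_1,\ldots,\ell)\cdot V_{g',n'}(\ldots,\ell)\cdot V_{g'',n''}(\ldots,\ell)\,d\ell,
\]
with strictly smaller-complexity volume factors (the separating case contributes a product of two polynomials; the nonseparating case contributes a single polynomial with $\ell$ repeated). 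Dividing by $L_1$ and inducting on $3g-3+n$ delivers both the polynomial form and the recursion. The stated degree bound and the denominator $\pi^{6g-6+2n-2|\alpha|}$ are visible because $\int_0^\infty \ell^{2k+1}\mathcal{D}(L,L',\ell)\,d\ell$ and its $\mathcal{R}$ analogue evaluate to polynomials in $L,L'$ with positive rational coefficients times a fixed even power of $\pi$, and positivity propagates through the recursion.

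To identify the coefficients as intersection numbers, I would realize $\{\mathcal{M}_{g,n}(L)\}_L$ as the family of symplectic reductions of a single WP-symplectic orbifold on which the twist at the $j$th boundary is a Hamiltonian $S^1$-action with moment map $\ell_j^2/2$. The reduction at $L=0$ compactifies to $\overline{\mathcal{M}}_{g,n}$ with reduced symplectic form extending to the WP class $\omega$, while the $j$th associated $S^1$-bundle extends across the Deligne-Mumford boundary with first Chern class $\psi_j$, the cotangent line class at the $j$th puncture. The Duistermaat-Heckman theorem then yields the cohomology identity
\[
[\omega_{\mathcal{M}_{g,n}(L)}]\;=\;[\omega]\,+\,\tfrac{1}{2}\sum_{j=1}^n L_j^2\,\psi_j
\]
in $H^2(\overline{\mathcal{M}}_{g,n};\mathbb{R})$, and integration gives
\[
V_{g,n}(L)\;=\;\frac{1}{(3g-3+n)!}\int_{\overline{\mathcal{M}}_{g,n}}\Bigl(\omega+\tfrac{1}{2}\sum_j L_j^2\psi_j\Bigr)^{3g-3+n}.
\]
Expanding the multinomial reproduces the stated formula for $C_\alpha$, and the exceptional factor $2^{\delta_{1g}\delta_{1n}}$ accounts for the hyperelliptic involution that fixes every point of $\overline{\mathcal{M}}_{1,1}$ while acting as $-1$ on the twist circle.

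The main obstacle is the integration-and-unfolding step producing the recursion. Writing the length identity is elementary hyperbolic geometry on a pair of pants, but converting it into a volume identity demands precise bookkeeping with stabilizers of simple multicurves in the mapping class group, a proof that the cover $\mathcal{T}_{g,n}/\operatorname{Stab}(\gamma)$ decomposes as a symplectic fibered product of lower-complexity Teichm\"uller spaces compatibly with the WP form, and convergence control for the improper integrals at $\ell\to 0^+$ and $\ell\to\infty$. A secondary subtlety is the extension of the reduced WP class across the Deligne-Mumford boundary in the symplectic reduction step, which requires identifying the pointwise limit of the reduced form with the expected tautological class on $\overline{\mathcal{M}}_{g,n}$.
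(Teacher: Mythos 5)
Your proposal is correct and follows essentially the same route as the paper: the McShane--Mirzakhani identity combined with the covolume/unfolding formula and the $d\ell\wedge d\tau$ expression of $\omega$ yields the recursion and polynomiality with positive coefficients in even powers of $\pi$, and symplectic reduction of the pointed-boundary Teichm\"uller space with moment map $L_j^2/2$, the Duistermaat--Heckman normal form, and the identification of the boundary circle bundles with the $\psi_j$ classes yields the coefficient formula, with the $2^{\delta_{1g}\delta_{1n}}$ factor coming from the elliptic involution exactly as in your normalization $V_{1,1}(L)=(L^2+4\pi^2)/48$. The obstacles you flag (stabilizer bookkeeping, the symplectic fibered-product structure of $\mathcal{T}/\operatorname{Stab}(\gamma)$, and extending the reduced class over the Deligne--Mumford compactification) are precisely the points the paper's Lectures 3 and 4 address, so no change of strategy is needed.
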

\begin{thrm} \cite[Theorems 6.3 \& 6.4]{Mirvol}.  \textup{Recursive relations for the volume polynomial leading coefficients.} For a multi index $\alpha$, define 
\[
(\alpha_1,\dots,\alpha_n)_g=C_{\alpha}\times2^{-\delta_{1g}\delta_{1n}}\times\prod_{i=1}^n\alpha_i!\times2^{|\alpha|},
\]  
then for $n>0$ and $\sum_i\alpha_i=3g-3+n$,
\[
\mbox{the dilaton equation}\qquad\quad (1,\alpha_1,\dots,\alpha_n)_g\,=\,(2g-2+n)(\alpha_1,\dots,\alpha_n)_g
\]
and for $n>0$ and $\sum_i\alpha_i=3g-2+n$,
\[
\mbox{the string equation}\qquad (0,\alpha_1,\dots,\alpha_n)_g\,=\,\sum_{\alpha_i\ne 0}(\alpha_1,\dots,\alpha_i-1,\dots,\alpha_n)_g.
\]
For the intersection number generating function 
\[
\mathbf{F}(\lambda,t_0,t_1,\dots)\,=\,\sum_{g=0}^{\infty}\lambda^{2g-2}\sum_{\{d_j\}}\,\langle\prod_{j=1}^{\infty}\tau_{d_j}\rangle_g\,\prod_{r\ge0}t_r^{n_r}/n_r!\,,
\]
with $n_r=\#\{j\mid d_j=r\}$, and
\[
\langle \tau_{d_1}\cdots\tau_{d_{n}}\rangle_g\,=\,\int_{\overline{\caM}_{g,n}}\prod_{j=1}^{n}\psi_j^{d_j},
\]
for $\sum_{j=1}^{n}d_j=3g-3+n$ and the product $\langle\tau_*\rangle$ otherwise zero, then the exponential $e^{\mathbf F}$ satisfies Virasoro algebra constraints, \cite[Theorem 6.1]{Mirwitt}.
\end{thrm}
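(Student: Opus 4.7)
The plan is to derive all three assertions from the volume recursion of Theorem 1 (Formula (5.1) of \cite{Mirvol}), specialized to its top-degree part. Since $V_{g,n}(L)$ has total degree $6g-6+2n$ and the coefficients with $|\alpha|=3g-3+n$ are precisely the pure $\psi$-class intersections (the $\omega^{0}$ contribution to Theorem 1), the top-degree part of the recursion gives a closed recursion purely among the rescaled numbers $(\alpha_1,\dots,\alpha_n)_g$. Concretely, one extracts the coefficient of $L_1^{2\alpha_1+1}L_2^{2\alpha_2}\cdots L_n^{2\alpha_n}$ from both sides of Mirzakhani's identity expressing $L_1V_{g,n}(\LL)$ as a sum of integrals against the three standard kernels, discarding all terms that will eventually pair with positive powers of $\omega$. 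The normalization built into the definition of $(\alpha_1,\dots,\alpha_n)_g$ (the factors $2^{|\alpha|}\alpha!$ and the $2^{\delta_{1g}\delta_{1n}}$) is chosen precisely so that this leading-coefficient recursion has integer form.

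Next, for the string and dilaton equations I would feed specific values of $\alpha_1$ into this leading recursion. Setting $\alpha_1=0$ extracts the lowest non-trivial power $L_1^{1}$; here the kernel expansions contribute only through a single combinatorial term that, after bookkeeping, produces the displayed sum $\sum_{\alpha_i\ne 0}(\alpha_1,\dots,\alpha_i-1,\dots,\alpha_n)_g$, giving the string equation. For the dilaton equation take $\alpha_1=1$; now the recursion's non-separating and separating pair-of-pants contributions combine with the universal constant from the Mirzakhani kernels, and the total combinatorial weight that emerges is the Euler characteristic factor $2g-2+n$ (the number of ways an extra pair of pants carrying $L_1$ can be attached, weighted by the mapping-class-group symmetry). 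As a cross-check, the resulting identities coincide with the classical Witten string and dilaton equations, which they must, since the $C_{\alpha}$ are $\psi$-class intersection numbers.

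Finally, for the Virasoro constraints on $e^{\mathbf F}$ I would package the full leading-coefficient recursion as a system of partial differential operators in the variables $t_r$. After the standard rescaling that converts the $\tau_d$-generating function into Kontsevich's variables (with the $(2d+1)!!$ normalization), I would show that the leading recursion is equivalent to the system $L_k\bigl(e^{\mathbf F}\bigr)=0$ for $k\ge -1$. The cases $k=-1$ and $k=0$ are the string and dilaton equations just established, and the operators $L_k$ for $k\ge 1$ are read off by matching the coefficient of $L_1^{2k+1}$ in the top-degree part of the Mirzakhani recursion against the quadratic $+$ linear differential operator structure $\sum t_{m+k}\partial_{m}+\sum \partial_{i}\partial_{k-1-i}+\text{lower order}$. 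Since the $L_k$ close under commutators into the (upper) half of the Virasoro algebra, satisfying the entire system $L_k e^{\mathbf F}=0$ is equivalent to the Virasoro algebra constraints.

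The main obstacle I anticipate is this final matching step. Mirzakhani's integral kernels are built from functions like $H(x,y)=\log\bigl(\cosh\tfrac{y+x}{2}/\cosh\tfrac{y-x}{2}\bigr)$ whose Taylor coefficients are specific combinations of factorials, double factorials and Bernoulli-type rationals; one must verify that the bilinear sums over all non-separating and separating pair-of-pants configurations assemble, after the Kontsevich rescaling, precisely into the differential operators $L_k$. Reconciling all prefactors, in particular the $2^{|\alpha|}\alpha!$ and the $2^{\delta_{1g}\delta_{1n}}$ of Theorem 1 with the $(2d+1)!!$ normalization that converts $\langle\tau_{d_1}\cdots\tau_{d_n}\rangle_g$ into a coefficient of $e^{\mathbf F}$, is the most delicate combinatorial bookkeeping in the argument.
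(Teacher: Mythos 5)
Your plan follows essentially the same route as the paper: specialize Mirzakhani's volume recursion to its top-degree (leading-coefficient) part, identify those coefficients with the pure $\psi$-intersection numbers via the first theorem, and rearrange the resulting double-factorial coefficient relation into the constraints $L_{k}(e^{\mathbf F})=0$, with the string and dilaton equations appearing as the $k=-1,0$ cases. One minor slip worth noting: the kernel is $H(x,y)=\frac{1}{1+e^{(x+y)/2}}+\frac{1}{1+e^{(x-y)/2}}$ rather than the $\log\cosh$ expression you quote (which is a related integrated quantity), but this does not affect the structure of your argument.
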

\vspace{.1in}

\section{The organizational outline and reading guide.}  
The following outline combines \cite{Mirvol, Mirwitt} with the exposition of \cite[Chapter 9]{Wlcbms}.  The lectures are presented in the next section.  
\begin{itemize} 
  \item Teichm\"{u}ller spaces, moduli spaces, mapping class groups and the symplectic geometry.
  \begin{itemize}
	\item The Teichm\"{u}ller space $\caT_{g,n}$ and moduli space $\caM_{g,n}$; the Teichm\"{u}ller space $\caT_g\LL$ and moduli space $\caM_g\LL$ of prescribed length geodesic bordered hyperbolic surfaces; the augmented Teichm\"{u}ller space and Deligne-Mumford type compactifications; Dehn twists and the mapping class group $\mcg$ action.
	\item The WP symplectic geometry, \cite{Wlcbms}.
	\begin{itemize}
		\item The symplectic form $\omega=2\omega_{\tiny{\mbox{WP\ K\"ahler}}}$ and normalizations.
		\item Geodesic-length functions $\ell_{\alpha}$, Fenchel-Nielsen infinitesimal twist deformations $t_{\alpha}$ and the duality formula $\omega(\ ,t_{\alpha})=d\ell_{\alpha}$.
		\item Fenchel-Nielsen (FN) twist-length coordinates $(\tau_j,\ell_j)$ for Teichm\"{u}ller space and the formula $\omega=\sum_jd\ell_j\wedge d\tau_j$.  
	\end{itemize}
	\item The intermediate moduli space $\caM^{\gamma}_{g,n}$ of pairs $(R,\gamma)$ - a surface and a weighted multicurve $\gamma=\sum_jc_j\gamma_j$.
	\begin{itemize} 
		\item The covering tower
\[
\caT_{g,n}\longrightarrow \caM^{\gamma}_{g,n}\longrightarrow\caM_{g,n}.
\]
		\item The stabilizer subgroup $\operatorname{Stab}(\gamma)\subset\mcg$ for a weighted multicurve.  The $\mcg$ deck cosets for the covering tower.
		\item  Symplectic structures for $\caM^{\gamma}_{g,n}$ and $\caM_g(L)$.
	\end{itemize}
	\item The $\caT_{g,n}$ and $\caM^{\gamma}_{g,n}$ level sets of the total length $\ell=\sum_jc_j\ell_{\gamma_j}$.
\begin{lemma} \textup{\cite[Lemma 7.2]{Mirvol}.} \textup{Preparation for volume recursion and symplectic reduction.}  A finite cover of $\caM^{\gamma}_{g,n}$ is a fibered product of symplectic planes and lower dimensional moduli spaces.
\end{lemma}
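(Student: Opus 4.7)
The strategy is to adapt Fenchel--Nielsen coordinates to the multicurve $\gamma$ and then quotient by a finite-index subgroup of $\operatorname{Stab}(\gamma)$ chosen to unwind the permutation action on the components of $\gamma$.

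First, cut.  Writing $\gamma=\sum_{j=1}^{k}c_j\gamma_j$ with disjoint essential simple closed curves, cut $R$ along $\bigcup_j\gamma_j$ to obtain a (possibly disconnected) bordered surface $R_\gamma$ with $n+2k$ boundary components; each piece of $R_\gamma$ has strictly simpler topology than $R$, so its Teichm\"uller/moduli space has strictly smaller dimension.  Choose a pants decomposition of $R$ containing the $\gamma_j$ and restrict it to a pants decomposition of $R_\gamma$.  The $k$ twist--length pairs $(\tau_j,\ell_j)$ for the $\gamma_j$ together with the remaining FN coordinates (which are FN coordinates on $\caT(R_\gamma)$ relative to the boundary) identify
\[
\caT_{g,n}\;\cong\;\prod_{j=1}^{k}(\mathbb{R}\times\mathbb{R}_{>0})\;\times_{(\ell_1,\dots,\ell_k)}\;\caT(R_\gamma),
\]
a fibered product over the $k$ lengths: the two boundary circles of $R_\gamma$ lying above each $\gamma_j$ must carry the common length $\ell_j$ produced by the $j^{\text{th}}$ twist--length factor.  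By the formula $\omega=\sum d\ell_i\wedge d\tau_i$, the WP form decomposes as the sum of the $d\ell_j\wedge d\tau_j$ on the twist--length factors plus the WP form on $\caT(R_\gamma)$, so the fibered product is symplectic.

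Next, analyze $\operatorname{Stab}(\gamma)$.  Its elements may permute the $\gamma_j$ subject to preserving weights and may swap the two sides of some $\gamma_j$, but the subgroup $K\subset\operatorname{Stab}(\gamma)$ fixing each $\gamma_j$ together with each of its two sides is of finite index.  A standard cutting statement for mapping class groups then identifies $K\cong\mathbb{Z}^{k}\times\mcg(R_\gamma)$, where the $\mathbb{Z}^{k}$ factor is generated by the Dehn twists $T_{\gamma_j}$ and $\mcg(R_\gamma)$ is the mapping class group of the cut surface fixing each boundary circle.

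Finally, quotient.  The finite cover $\caT_{g,n}/K\to\caT_{g,n}/\operatorname{Stab}(\gamma)=\caM^{\gamma}_{g,n}$ has degree $[\operatorname{Stab}(\gamma):K]$.  Each $T_{\gamma_j}$ acts on the $j^{\text{th}}$ twist--length plane by $\tau_j\mapsto\tau_j+\ell_j$, while $\mcg(R_\gamma)$ acts on $\caT(R_\gamma)$ with quotient the moduli space $\caM(R_\gamma;\ell_1,\dots,\ell_k)$ of bordered surfaces with prescribed boundary lengths.  The two actions commute and preserve the fibered product, giving
\[
\caT_{g,n}/K\;\cong\;\prod_{j=1}^{k}\bigl((\mathbb{R}/\ell_j\mathbb{Z})\times\mathbb{R}_{>0}\bigr)\;\times_{(\ell_1,\dots,\ell_k)}\;\caM(R_\gamma;\ell_1,\dots,\ell_k),
\]
a fibered product of two-dimensional symplectic twist--length leaves (the ``symplectic planes'') with the strictly lower-dimensional moduli spaces of the components of $R_\gamma$.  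I expect the main obstacle to be the direct-product identification $K\cong\mathbb{Z}^{k}\times\mcg(R_\gamma)$: one must check there are no hidden relations beyond commuting the Dehn twists past mapping classes of $R_\gamma$, and one must carefully account for the side-swaps and component permutations contributing only finite covering degrees.  Once this is in hand, the remainder is a routine FN book-keeping computation using the $d\ell\wedge d\tau$ formula.
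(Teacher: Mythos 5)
Your overall route is the paper's route: cut along $\gamma$, use Fenchel--Nielsen coordinates adapted to a pants decomposition containing the $\gamma_j$, pass to the finite-index subgroup $K=\bigcap_j\stab_0(\gamma_j)\subset\stab(\gamma)$ (index $|\sym(\gamma)|$), and read off the fibered product of twist--length planes and lower-dimensional moduli spaces from the $d\ell\wedge d\tau$ formula. However, there are two genuine problems in the way you execute the quotient step. First, you assert $K\cong\mathbb{Z}^k\times\mcg(R_\gamma)$ as a direct product and flag it yourself as the main obstacle; the paper never needs (and does not claim) a splitting. All that is used is the short exact sequence $1\to\prod_j\mbox{Dehn}(\gamma_j)\to\bigcap_j\stab_0(\gamma_j)\to\prod_{R'}\mcg(R')\to1$ together with the induced fibration of quotients; replacing your product decomposition by this sequence removes the obstacle rather than forcing you to resolve it.

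Second, and more seriously, your description of the quotient planes is wrong whenever some $\gamma_j$ bounds a torus with a single boundary. In that case the \emph{half} Dehn twist about $\gamma_j$ (the elliptic involution of the torus piece) lies in $K$: it fixes $\gamma_j$ and both of its sides, acts on the $j^{\text{th}}$ twist--length plane by $\tau_j\mapsto\tau_j+\ell_j/2$, and acts trivially on the Teichm\"uller space of the torus component. So the plane factor in $\caT_{g,n}/K$ is $(\mathbb{R}/\tfrac{\ell_j}{2}\mathbb{Z})\times\mathbb{R}_{>0}$, not $(\mathbb{R}/\ell_j\mathbb{Z})\times\mathbb{R}_{>0}$; equivalently, if one insists on quotienting by full twists only, the torus factor must be taken with its generic $\mathbb{Z}/2$ stabilizer, which is exactly the source of the convention $V_{1,1}(L)=\frac{\pi^2}{12}+\frac{L^2}{48}$. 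The paper handles this explicitly by introducing $\mbox{Dehn}_*(\gamma_j)$, generated by the half twist when $\gamma_j$ bounds a one-holed torus and by the full twist otherwise. As written, your fibered product is off by a factor of $2$ for each such $\gamma_j$, which would propagate into the covolume formula and break the volume recursion's normalization. Away from these one-holed-torus components (and modulo replacing the direct product by the exact sequence), your bookkeeping agrees with the paper.
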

  \end{itemize}
	\item The McShane-Mirzakhani length identity.
 		\begin{itemize} 
			\item The set $\mathcal B$ of homotopy classes rel boundary of simple arcs with endpoints on the boundary.  Classification of geodesic arcs normal to a boundary: simple geodesics normal to a boundary at each endpoint $\Longleftrightarrow$ disjoint pairs of boundary intervals $\Longleftrightarrow$ wire frames for pants.
			\item Birman-Series: simple geodesics have measure zero.	
			\item The rational exponential function $H$; the hyperbolic trigonometric functions $\caD$ and $\caR$; relations.
			\item The length identity.
\begin{thrm} \textup{\cite[Theorem 1.3 \& 4.2]{Mirvol} and \cite[Thrm. 1.8]{TanWZ}.} For a hyperbolic surface $R$ with boundaries $\beta_j$ with lengths $L_j$,
\[
L_1\,=\,\sum_{\alpha_1,\alpha_2}\,\caD(L_1,\ell_{\alpha_1}(R),\ell_{\alpha_2}(R))\,+\,
\sum_{j=2}^n\sum_{\alpha}\,\caR(L_1,L_j,\ell_{\alpha}(R)),
\]
where the first sum is over all unordered pairs of simple closed geodesics with $\beta_1,\alpha_1,\alpha_2$ bounding an embedded pair of pants, and the double sum is over simple closed geodesics with $\beta_1,\beta_j,\alpha$ bounding an embedded pair of pants.
\end{thrm}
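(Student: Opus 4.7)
The plan is to set up a measure-theoretic decomposition of $\beta_1$ by following inward normal geodesics and to identify the resulting intervals with embedded pairs of pants. For each $p\in\beta_1$, let $\gamma_p$ be the geodesic ray emanating inward and normal to $\beta_1$ at $p$. One of three things happens: (i) $\gamma_p$ is simple for all forward time, (ii) $\gamma_p$ hits the boundary $\partial R$ simply before any self-intersection, or (iii) $\gamma_p$ eventually self-intersects. By the Birman-Series result cited above, the set of $p$ giving outcome (i) has Lebesgue measure zero in $\beta_1$. Thus up to a null set, $\beta_1$ is a disjoint union of open intervals indexed by the ``first obstruction'' encountered by $\gamma_p$.

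Next I would invoke the classification of simple geodesic arcs on a pair of pants (the ``wire frame'' discussion): cutting $\gamma_p$ at either its first self-intersection or at the first boundary hit produces a simple arc with endpoints on $\partial R$, and the regular neighborhood of this arc together with the ambient boundary components retracts onto an embedded pair of pants $P\subset R$ having $\beta_1$ as one cuff. The other two cuffs of $P$ are simple closed geodesics $\alpha_1,\alpha_2$ of $R$ (contributing to the first sum) or one is another boundary component $\beta_j$ with the remaining cuff a geodesic $\alpha$ (contributing to the second sum). Conversely, any such embedded pair of pants arises this way, so the indexing matches the sums in the theorem.

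It then remains to compute, for a fixed pair of pants $P$, the length of the set of $p\in\beta_1$ whose normal ray first obstructs inside $P$. This is a pure hyperbolic-trigonometry calculation on an ideal configuration of three mutually non-separating cuffs: one lifts $P$ to a fundamental domain in $\mathbb{H}^2$, writes the condition that $\gamma_p$ enters and first self-intersects (or hits the opposite boundary) inside $P$ as an explicit interval in $p$ determined by the three cuff lengths, and reads off the length of this interval. The two model cases give, by definition, the functions $\mathcal{D}(L_1,\ell_{\alpha_1},\ell_{\alpha_2})$ and $\mathcal{R}(L_1,L_j,\ell_\alpha)$; the factor $2$ collapses correctly because unordered pairs of cuffs index each interior-type pants once. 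Summing over all embedded pants and using that the excluded simple set has measure zero gives $L_1$ on the left.

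The main obstacle is the pants-case analysis of step two: one must verify that the first obstruction for $\gamma_p$ always produces an \emph{embedded} pair of pants (not an immersed or degenerate one), and that the correspondence between intervals in $\beta_1$ and embedded pants is exactly $2$-to-$1$ in the interior case and $1$-to-$1$ in the mixed-boundary case. This requires the careful simple-arc classification on a pair of pants, together with an argument ruling out coincident endpoints and ensuring the arc we extract is itself simple. Once this combinatorial-topological step is in place, the trigonometric evaluation of $\mathcal{D}$ and $\mathcal{R}$ and the measure-theoretic assembly are essentially mechanical.
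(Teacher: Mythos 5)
Your proposal is correct and follows essentially the same route as the paper: classify the geodesic rays normal to $\beta_1$ by first self-intersection or boundary return, discard the simple-complete set by Birman--Series, associate the resulting intervals to embedded $\beta_1$-cuff pants via thickening the truncated arc (the paper's lasso/wire-frame argument), and evaluate the interval lengths by hyperbolic trigonometry to obtain $\caD$ and $\caR$. The step you flag as the main obstacle is exactly what the paper resolves with its main-gap analysis and the classification of ortho emanating geodesics, so there is no divergence in method.
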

			\item \cite[Section 8]{Mirvol} - Recognizing and understanding the identity as a smooth analog to a $\mcg$ fundamental domain.  Reducing to the action of smaller $\mcg$ groups.
		\end{itemize}
	\item The Mirzakhani volume recursion.
		\begin{itemize} 
			\item A covolume formula - writing a moduli integral as a length level set integral - the role of the intermediate moduli space $\caM_{g,n}^{\gamma}$ - \cite[Theorem 7.1]{Mirvol} and \cite[Theorem 9.5]{Wlcbms}.
			\item The application for the McShane-Mirzakhani identity.
				\begin{itemize}
					\item The integrals $\caA_{g,n}^{connected}$, $\caA_{g,n}^{disconnected}$ and $\caB_{g,n}$ of lower-dimensional moduli volumes.	 
					\item The corresponding connected and disconnected boundary pants configurations.

					\item Combining the length identity and covolume formula for the main result; see \cite[pg. 91 bottom, pg. 92]{Wlcbms}.
\begin{thrm} \textup{\cite[Section 5 and Theorem 8.1]{Mirvol}.}  For $(g,n)\ne (1,1), (0,3),$ the volume $V_{g,n}(L)$ satisfies
\[
\frac{\partial\ }{\partial L_1}L_1V_{g,n}(L)\,=\,\caA_{g,n}^{connected}(L)\,+\caA_{g,n}^{disconnected}(L)\,+\caB_{g,n}(L).
\]
\end{thrm}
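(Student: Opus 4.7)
The plan is to integrate the McShane-Mirzakhani length identity over $\caM_{g,n}(L)$ against the Weil-Petersson volume form. Since $L_1$ is constant on a fixed $L$-slice, the left hand side integrates to $L_1 V_{g,n}(L)$ and the identity becomes
\[
L_1 V_{g,n}(L)\,=\,\int_{\caM_{g,n}(L)}\Bigl(\sum_{\alpha_1,\alpha_2}\caD(L_1,\ell_{\alpha_1},\ell_{\alpha_2})+\sum_{j=2}^n\sum_{\alpha}\caR(L_1,L_j,\ell_\alpha)\Bigr)\,dV.
\]
On $\caM_{g,n}(L)$ each inner sum groups geodesics into finitely many topological types of embedded pants with prescribed boundary $\beta_1$ (and $\beta_j$ for the $\caR$-terms), indexed by $\mcg$-orbits of the associated multicurve $\gamma$.

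Next I would unfold each orbit sum via the covering tower $\caT_{g,n}\to\caM^{\gamma}_{g,n}\to\caM_{g,n}$. For a chosen multicurve representative $\gamma$, the orbit sum of a $\stab(\gamma)$-invariant integrand on $\caM_{g,n}(L)$ unfolds to the integral of the same integrand over $\caM^{\gamma}_{g,n}(L)$. The kernels $\caD(L_1,\ell_{\alpha_1},\ell_{\alpha_2})$ and $\caR(L_1,L_j,\ell_\alpha)$ depend only on the lengths of the components of $\gamma$ and hence descend directly to $\caM^{\gamma}_{g,n}(L)$.

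I then invoke the preparation lemma: a finite cover of $\caM^{\gamma}_{g,n}$ is a symplectic fibered product of the twist-length planes $(\tau_{\alpha_i},\ell_{\alpha_i})$ with the WP moduli spaces of the cut surface $R\setminus\gamma$. Combined with $\omega=\sum d\ell_j\wedge d\tau_j$, this splits each integral into an iterated one: integration over each twist $\tau_{\alpha_i}$ over a single period $[0,\ell_{\alpha_i})$ contributes a factor $\ell_{\alpha_i}$, and then one integrates in $\ell_{\alpha_i}\in(0,\infty)$ weighted by the WP volume polynomial of the cut moduli space. The three topological possibilities for the embedded pants - two cuts with connected complement of type $(g-1,n+1)$, two cuts with disconnected complement $(g_1,n_1)\sqcup(g_2,n_2)$ satisfying $g_1+g_2=g$ and $n_1+n_2=n+1$, and the single-cut $\caR$-configuration leaving a complement of type $(g,n-1)$ - match, by construction, the defining integrals for $\caA_{g,n}^{connected}$, $\caA_{g,n}^{disconnected}$, and $\caB_{g,n}$, respectively.

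Applying $\partial/\partial L_1$ to the resulting equation produces the left hand side $\partial_{L_1}(L_1 V_{g,n}(L))$ of the stated recursion and, after differentiating under the integral sign, converts the kernels $\caD,\caR$ into the simpler rational-exponential kernels built from $H$ that appear in the standard definitions of $\caA,\caB$. The main obstacle is the combinatorial bookkeeping of $\mcg$-orbits of embedded pants: one must identify orbit representatives with prescribed boundary components, track the $\alpha_1\leftrightarrow\alpha_2$ symmetry and the possible swap of the two components in the disconnected case (each yielding factors of $2$ in the stabilizers), and handle the degenerate situation when a cut component is itself a pair of pants, where $V_{0,3}=1$. The exclusion $(g,n)\ne(1,1),(0,3)$ ensures both that the identity is non-vacuous and that the induction descends to strictly smaller moduli data.
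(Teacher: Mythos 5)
Your proposal is correct and follows essentially the same route as the paper: integrate the McShane--Mirzakhani identity over moduli space, sort the pants configurations into the three $\mcg$-orbit types, unfold via the covolume formula (the covering tower and twist--length fibration), and use $\partial\caD/\partial L_1$, $\partial\caR/\partial L_1$ to produce the $H$-kernels in $\caA^{connected}$, $\caA^{disconnected}$, $\caB$. The only cosmetic difference is ordering: the paper differentiates in $L_1$ \emph{before} unfolding, so that the integrands become functions of the total multicurve length and the covolume formula applies verbatim, whereas you unfold first and differentiate under the integral at the end, which requires the mild extension of the covolume formula to kernels depending on the component lengths separately.
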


					\item The integrals $\caA^*_*(L)$ and $\caB_*(L)$ are polynomials in boundary lengths with coefficients sums of special values of Riemann zeta; the coefficients are positive rational multiples of powers of $\pi$. 
   
				\end{itemize}
		\end{itemize}
	\item Symplectic reduction and the Duistermaat-Heckman theorem.
	\begin{itemize}
		 
		\item The WP kappa equation $\omega=2\pi^2\kappa_1$ on Deligne-Mumford \cite{Wlhyp}.
		\item The geometry and topology of the Teichm\"{u}ller space $\widehat{\caT}_g(L)=\widehat{\caT}_g(L_1,\dots,L_n)$ of hyperbolic surfaces with geodesic boundaries with points.

		\item A symplectic structure for $\widehat{\caT}_g(L)$ by summing on almost tight pants. The boundary length moment map $R\in \widehat{\caT}_g(L) \mapsto L^2/2\in \mathbb R_+^n$.  Twisting boundary points as Hamiltonian flows.  
		\item Symplectic reduction, \cite{Fsympred}.  $\caT_g(L)$ as the reduced space $\widehat{\caT}_g(L)/(S^1)^n$.  $\widehat{\caT}_g(L)$ as a principal $(S^1)^n$ bundle over $\caT_{g,n}(L)$ and the small $L$ equivalence $\caT_g(L)\approx\caT_{g,n}(0)$,  
\begin{equation*}
\xymatrix{ (S^1)^n\  \ar[r] & \widehat{\caT}_g(L) \ar[d] \\ & \caT_g(L)\approx \caT_{g,n}(0)\,.}
\end{equation*}
		\item Points on circles, cotangent $\mathbb C$-lines at punctures, circle bundles and homotopic structure groups.  The tautological cotangent line class $\psi$.
		\item $\mcg$ equivariant maps and quotients. Deligne-Mumford type compactifications and finite covers.  The elliptic stack $2$.
		\item The Duistermaat-Heckman normal form, \cite[Theorem\ 3.2]{Mirwitt}, \cite[Section 2.5]{SafMu} and  \cite[pg. 95]{Wlcbms},  
\[
2\omega_{\overline{\widehat{\caM}}_g(L)/(S^1)^n}\, \equiv\, 2\omega_{\overline{\caM}_{g,n}(0)}\ +\ \sum_j \frac{L_j^2}{2}\psi_j\ .
\]
		\item The consequence of symplectic reduction, \cite[Theorem 4.4]{Mirwitt}.
\begin{thrm} The volume polynomial $V_{g,n}(L)$ coefficients are $\overline{\caM}_{g,n}$ intersection numbers given as
\[
C_{\alpha}=\frac{2^{\delta_{1g}\delta_{1n}}}{2^{|\alpha|}\alpha!(3g-3+n-|\alpha|)!}
\,\int_{\overline{\caM}_{g,n}}\psi_1^{\alpha_1}\cdots\psi_n^{\alpha_n}\omega^{3g-3+n-|\alpha|},
\]
where $\psi_j$ is the Chern class for the cotangent line along the $j^{th}$ puncture, $\omega$ is the symplectic form and $\delta_{**}$ is the Kronecker indicator. 
\end{thrm}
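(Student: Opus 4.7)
The plan is to apply the Duistermaat-Heckman theorem to extract the coefficients $C_\alpha$ of $V_{g,n}(L)$ as tautological intersection numbers on $\overline{\caM}_{g,n}$. First I would set up the symplectic framework: work on the auxiliary Teichm\"uller space $\widehat{\caT}_g(L)$ of hyperbolic surfaces with geodesic boundary \emph{with marked points}, endowed with the symplectic form obtained by summing contributions over almost tight pants. The twisting of boundary points gives a Hamiltonian $(S^1)^n$-action whose moment map is $R\mapsto(L_1^2/2,\dots,L_n^2/2)$. Reducing at level $L$ recovers $\caT_g(L)$, while reducing at $0$ recovers $\caT_{g,n}$; after passing to the $\mcg$-quotient, the projection realizes $\widehat{\caM}_g(L)$ as a principal $(S^1)^n$-bundle over $\caM_{g,n}$ whose $j$th factor, viewed at the ``zero'' limit, is exactly the circle bundle associated to the tautological cotangent line at the $j$th puncture, with first Chern class $\psi_j$.

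Second, I would invoke the Duistermaat-Heckman normal form in families: as $L$ varies through regular values of the moment map, the cohomology class of the reduced symplectic form is affine in the moment map values, with coefficients the first Chern classes of the bundle factors. This yields the identity stated in the excerpt,
\[
2\,\omega_{\overline{\widehat{\caM}}_g(L)/(S^1)^n}\ \equiv\ 2\,\omega_{\overline{\caM}_{g,n}(0)}\ +\ \sum_{j=1}^n \frac{L_j^2}{2}\,\psi_j,
\]
as a cohomology equality on $\overline{\caM}_{g,n}$.

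Third, the coefficients are extracted by integration. Writing $V_{g,n}(L)=\int_{\overline{\caM}_g(L)}\omega^{3g-3+n}/(3g-3+n)!$ and substituting the normal form, the multinomial expansion gives
\[
V_{g,n}(L)\ =\ \sum_{\alpha}\frac{1}{2^{|\alpha|}\alpha!(3g-3+n-|\alpha|)!}\Bigl(\int_{\overline{\caM}_{g,n}}\psi_1^{\alpha_1}\cdots\psi_n^{\alpha_n}\omega^{3g-3+n-|\alpha|}\Bigr)L^{2\alpha},
\]
matching the claim once the orbifold factor $2^{\delta_{1g}\delta_{1n}}$ is inserted to account for the extra automorphism (hyperelliptic involution) in the once-punctured torus case, where the degree of the covering $\caM_{1,1}\to\caM_1$ differs from the generic combinatorial count.

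The main obstacle will be justifying that the Duistermaat-Heckman normal form, which is inherently a smooth/local statement near a regular level of the moment map, extends across the Deligne-Mumford type compactifications $\overline{\caM}_g(L)$ and $\overline{\caM}_{g,n}$ so that both sides can be integrated as cohomology classes on the compactified moduli. Concretely, this requires showing that the $(S^1)^n$-bundle $\widehat{\caT}_g(L)\to\caT_g(L)$ extends to a circle bundle over the compactification whose Chern class is $\psi_j$ (using the WP K\"ahler extension $\omega=2\pi^2\kappa_1$ and the identification of points-on-circles data with cotangent directions at punctures), and handling the orbifold/stack subtleties that produce the exact power of $2$ and the factorials in the formula.
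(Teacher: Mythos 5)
Your proposal follows essentially the same route as the paper: the Hamiltonian $(S^1)^n$-action on $\That$ by rotating boundary points with moment map $L^2/2$, symplectic reduction identifying the $L$-level quotient with $\caT_g(L)$ and the $0$-level with $\caT_{g,n}$, the Duistermaat--Heckman normal form giving $\omTL\equiv\omega+\sum_j\frac{L_j^2}{2}\psi_j$ in cohomology after identifying the boundary-point circle bundles with the cotangent-line classes at the punctures, followed by the multinomial expansion of $\frac{1}{d!}\int(\omega+\sum_j\frac{L_j^2}{2}\psi_j)^d$ and the elliptic-involution factor $2^{\delta_{1g}\delta_{1n}}$. You also correctly flag the compactification/orbifold issues that the paper handles via the augmentation construction and manifold finite covers, so the plan matches the paper's proof in substance.
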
	
	\end{itemize}
	\item The pattern of intersection numbers.
 	\begin{itemize} 
		\item The general intersection number symbol 
\[	\langle \kappa_1^{d_0}\tau_{d_1}\cdots\tau_{d_n}\rangle_g\,=\,\int_{\overline{\caM}_{g,n}}\prod_{j=1}^n\psi_j^{d_j}\kappa_1^{d_0}
\]
and volume polynomial expansions. 
		\item Examples: $V_{1,1}(L)=\frac{\pi^2}{6}\,+\,\frac{L^2}{24}$, $V_{0,4}(L_1,L_2,L_3,L_4)=(4\pi^2+L_1^2+L_2^2+L_3^2+L_4^2)/2$ and $V_{2,1}(L)=\frac{1}{2211840}(L^2+4\pi^2)(L^2+12\pi^2)(5L^4+384\pi^2L^2+6960\pi^4)$.
		\item The partition function $\mathbf F=\sum_g\langle e^{\sum_jt_j\tau_j}\rangle_g$ and Virasoro constraint differential operators $\mathbf L_k$ \cite{Mirwitt}.  The partition function $\mathbf G=\sum_g\langle e^{s\kappa_1+\sum_jt_j\tau_j}\rangle_g$ and Virasoro constraint differential operators $\mathbf V_k$ \cite[Theorem 1.1]{SafMu}. 
		
		\item The Do {\em remove a boundary} relation
\[
\frac{\partial V_{g,n+1}}{\partial L_{n+1}}(L,2\pi i)\,=\,2\pi i(2g-2+n)V_{g,n}(L), \qquad\cite{Dothe,DoNo}.
\]
		\item The Manin-Zograf volumes generating function \cite{MZ}.  The punctures asymptotic - for positive constants $c,\, a_g$,  genus fixed and large $n$, then
\[
V_{g,n}\,=\,c^nn!\,n^{(5g-7)/2}(a_g\,+\,O(1/n)), \qquad\cite{MZ}.
\]
		\item The Schumacher-Trapani genera asymptotic - for $n$ fixed, there are positive constants, then
\[
c_1^{\,g}(2g)!\,<\,V_{g,n}\,<\,c_2^{\,g}(2g)!, \qquad\cite{Gru, SchTr}.  
\]
		\item The Zograf conjecture 
\[
V_{g,n}\,=\,(4\pi^2)^{2g+n-3}(2g+n-3)!\frac{1}{\sqrt{g\pi}}\big(1\,+\,\frac{c_n}{g}\,+\,O(1/g^2)\big),
\]
for fixed $n$ and $g$ tending to infinity, \cite{Zogasym}.  Expected values of geometric invariants \cite{Mirasymp}. 
 	\end{itemize}
\end{itemize}

\section{The lectures.}

The following is an exposition of Mirzakhani's proof of the Witten-Kontsevich theorem, including the immediate background material on Teichm\"{u}ller theory, moduli space theory and on symplectic reduction.  The lectures are presented as from a graduate text - the exposition follows the development of concepts, and does not consider the historical development of the material. The goals are general treatment of the material and overall understanding for the reader.  In places, the approaches of several authors are combined for a simpler treatment of the material.  Only immediate references to the literature are included. The reader should consult the literature for the historical development, for complete references and for consequences of the material.

\begin{enumerate}
  \item \hyperlink{lec1}{\bfseries The background and overview. }
  \item \hyperlink{lec2}{\bfseries The McShane-Mirzakhani identity.}
  \item \hyperlink{lec3}{\bfseries The covolume formula and recursion.}
  \item \hyperlink{lec4}{\bfseries Symplectic reduction, principal $S^1$ bundles and the normal form.}
  \item \hyperlink{lec5}{\bfseries The pattern of intersection numbers and Witten-Kontsevich.}
\end{enumerate}

\noindent {\hypertarget{lec1}{{\bfseries Lecture 1: The background and overview.}}}

\noindent {\bfseries General background.}

By Uniformization, for a surface of negative Euler characteristic, a conformal structure is equivalent to a complete hyperbolic structure.  We consider Riemann surfaces $R$ of finite topological type with hyperbolic metrics, possibly with punctures and geodesic boundaries, if boundaries are non empty.  Fix a topological reference surface $F$, and consider a marking, an orientation preserving homeomorphism $f:F\rightarrow R$ up to homotopy.  If boundaries are non empty, homotopy is rel boundary setwise.  Write $\caT$ for the Teichm\"{u}ller space of $R$ - the space of equivalence classes of pairs $\{(f,R)\}$, where pairs are equivalent if there is a homotopy mapping triangle with a conformal map (a hyperbolic isometry) between Riemann surfaces.

\begin{figure}[htbp] % float placement: (h)ere, page (t)op, page (b)ottom, other (p)age
  \centering
  % file name: C:/Documents and Settings/Scott/Desktop/MyFiles/CBMS/CBMSbook/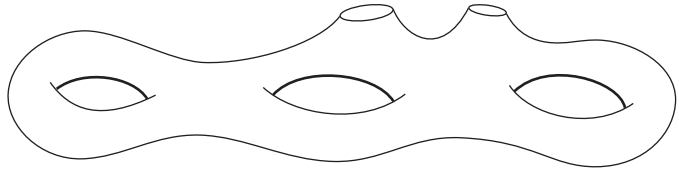
  \includegraphics[bb=0 0 506 124,width=3.75in,height=0.858in,keepaspectratio]{genus3bdr}
  \caption{A genus $3$ surface with $2$ geodesic boundaries.}
  \label{fig:genus3bdr}
\end{figure}

We consider the following Teichm\"{u}ller spaces $\caT$: $\Tg$ - for compact genus $g$ surfaces; $\TgL$ - for genus $g$ surfaces with labeled geodesic boundaries of prescribed lengths $L=(L_1,\dots,L_n)$; $\Tgn$ - for genus $g$ surfaces with $n$ labeled punctures.  In the case of $\TgL$, homotopies of surfaces are rel boundaries setwise.  $\Tg$ and $\Tgn$ are complex manifolds, while $\TgL$ is a real analytic manifold.

A non trivial, non puncture peripheral, free homotopy class $\alpha$ on $F$ has a unique geodesic representative for $f(\alpha)$ on the surface $R$ - the geodesic length $\lla(R)$ provides a natural function on Teichm\"{u}ller space.  Collections of geodesic-length functions provide local coordinates and global immersions to Euclidean space for $\caT$.   The differential of geodesic-length for a simple curve is nowhere vanishing.  At each point of $\caT$, the differentials of geodesic-lengths of simple curves are dense in the cotangent bundle.

A surface  can be cut open on a simple closed geodesic - the boundaries are isometric circles.  Since a neighborhood of a simple geodesic has an $S^1$ symmetry, the boundaries can be reassembled with a relative rotation to form a new hyperbolic structure. The deformation is the Fenchel-Nielsen (FN) twist. 
\begin{figure}[htbp] % float placement: (h)ere, page (t)op, page (b)ottom, other (p)age
  \centering
  % file name: C:/Documents and Settings/Scott/Desktop/MyFiles/CBMS/CBMSbook/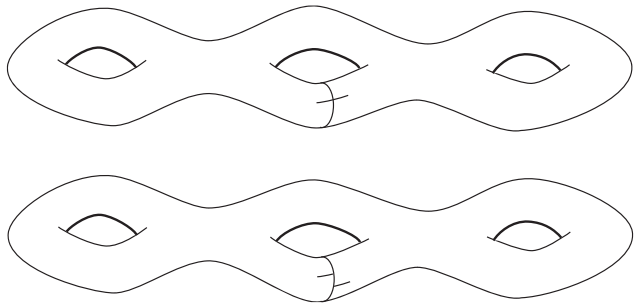
  \includegraphics[bb=0 0 529 251,width=3.3in,height=1.56in,keepaspectratio]{genus3t}
  \caption{A positive Fenchel-Nielsen twist deformation.}
  \label{fig:genus3t}
\end{figure}
The infinitesimal deformation for unit speed hyperbolic displacement of initial adjacent points, is the Fenchel-Nielsen infinitesimal twist vector field 
$t_{\alpha}$ on $\caT$.  (A positive twist corresponds to displacing to the right when crossing the geodesic.)

Geodesic boundaries of hyperbolic surfaces of common length can be assembled to form new surfaces.   Given boundary reference points, the relative rotation is measured in terms of arc length.  A hyperbolic pair of pants is a genus zero surface with three geodesic boundaries.  For pants, boundary reference points are provided by considering the unique orthogonal connecting geodesics between boundaries.  At a gluing, the common boundary length $\ell$ and relative rotation, the FN twist parameter $\tau$, are unrestricted parameters ($\tau$ is defined by continuation from an initial configuration).  The length $\ell$ varies in $\mathbb R_{>0}$ and twist $\tau$ varies in $\mathbb R$.  Each finite topological type hyperbolic surface can be assembled from pairs of pants.  

\begin{thrm*}\textup{Fenchel-Nielsen coordinates.}  Fixing the topological type of a pants decomposition and an initial configuration, the FN parameters $\prod_{j=1}^{3g-3+n}(\ell_j,\tau_j)$ define a real analytic equivalence of $\caT$ to $\prod_{j=1}^{3g-3+n}\mathbb R_{>0}\times\mathbb R$.
\end{thrm*}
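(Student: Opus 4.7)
The plan is to establish the map as a real analytic bijection by constructing an explicit inverse. The structural ingredient is the classification of hyperbolic pairs of pants: a hyperbolic pair of pants with geodesic boundaries is determined up to isometry by its three boundary lengths $(\ell_1,\ell_2,\ell_3)\in\mathbb{R}_{>0}^3$, with all positive triples realized. This follows by cutting the pants along the three unique orthogonal arcs connecting pairs of boundaries into two isometric right-angled hexagons, and then invoking the classical fact that a hyperbolic right-angled hexagon is determined up to isometry by any three alternating side lengths (with those three sides allowed to be arbitrary positive reals). The orthogonal arcs, called the seams, also provide canonical basepoints on each boundary circle.

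First I would check well-definedness and real analyticity of the forward map. A pants decomposition partitions $F$ into $2g-2+n$ pairs of pants along $3g-3+n$ interior simple closed curves, matching the $6g-6+2n$ real dimension of $\caT$. Given $(f,R)\in\caT$, each pants curve has a unique geodesic representative of length $\ell_j>0$; the two adjacent geodesic pants pieces each carry seams meeting this curve orthogonally at two canonical basepoints, whose signed arc-length displacement, continued from the initial configuration, defines $\tau_j\in\mathbb{R}$. Both lengths and twists vary real analytically in $R$ via hyperbolic trigonometric identities.

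To construct the inverse, given parameters $(\ell_j,\tau_j)$, I assemble $2g-2+n$ pairs of pants with the prescribed boundary-length triples via the hexagon decomposition, then glue them according to the combinatorial pattern, identifying matching boundaries after rotating one side by arc-length $\tau_j$. The result is a hyperbolic surface whose combinatorial data supplies a canonical marking, and every step is real analytic in $(\ell_j,\tau_j)$. Injectivity is immediate: two marked surfaces with identical FN parameters have isometric pants pieces, isometrically identified by matching basepoints and twist amounts, yielding an isometry in the homotopy class of the marking.

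The main obstacle is the global structure of the twist coordinates. A priori, relative rotation on a gluing cylinder is an element of $\mathbb{R}/\ell_j\mathbb{Z}$, so one must verify that $\tau_j$ genuinely ranges over $\mathbb{R}$ rather than a circle; this is precisely the role of the initial configuration and of working with marked surfaces, since a full twist by $\ell_j$ changes the marking by a Dehn twist and hence yields a different point of $\caT$. A related bookkeeping subtlety arises at pants curves where both adjacent sides lie in the same pair of pants (for example, a non-separating curve in a one-holed torus piece): the initial configuration must prescribe which of the two seam basepoints plays which role, so that continuation is unambiguous. Once these conventions are fixed, real analyticity of the inverse follows from real analyticity of the hexagon side-length relations and of the gluing rotations.
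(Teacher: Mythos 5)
Your proposal is correct and is essentially the standard argument (right-angled hexagon classification of pants, seam basepoints, gluing construction for the inverse, with the marking/initial configuration unwinding the twist from $\mathbb{R}/\ell_j\mathbb{Z}$ to $\mathbb{R}$), which is exactly the treatment in the references the paper cites for this background theorem; the paper itself states it without proof. The only point to add is that for $\mathcal{T}_{g,n}$ the pants adjacent to punctures have cusps, so the isometry classification must be extended to degenerate pants (boundary lengths allowed to be zero, via degenerate hexagons), after which your argument goes through unchanged.
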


The Weil-Petersson (WP) metric is K\"{a}hler.  The symplectic geometry begins with the symplectic form $\omega=2\omega_{\tiny{\mbox{WP\ K\"ahler}}}$ and the basic twist-length duality 
\[
\omega(\ ,t_{\alpha})=d\ell_{\alpha}.
\]  
It follows from the Lie derivative equation $L_X\omega(\ ,\ )=d\omega(X,\ ,\ )+d(\omega(X,\ ))$ that the form $\omega$ is invariant under all twist flows.  It follows that geodesic-length functions are Hamiltonian potentials for $FN$ infinitesimal twists. Symmetry reasoning shows that $\ell$ and $\tau$ provide action-angle coordinates for $\omega$.  

\begin{thrm*}\textup{(W), \cite{Wlcbms}.\ The $d\ell\wedge d\tau$ formula.} The WP symplectic form is
\[
\omega\,=\,\sum_{j=1}^{3g-3+n}\,d\ell_j\wedge d\tau_j.
\]
\end{thrm*}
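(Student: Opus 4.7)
The plan is to compute the matrix of $\omega$ in the Fenchel--Nielsen basis $\{\partial/\partial\ell_j,\partial/\partial\tau_j\}$, using the fact that $\partial/\partial\tau_j = t_{\alpha_j}$ by the very construction of FN coordinates. The duality formula $\omega(\cdot,t_\alpha)=d\ell_\alpha$ is the computational engine throughout.

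First I would compute two of the three coordinate blocks directly from duality. The mixed block is immediate: $\omega(\partial_{\ell_i},\partial_{\tau_j})=d\ell_j(\partial_{\ell_i})=\delta_{ij}$. The twist--twist block vanishes: $\omega(\partial_{\tau_i},\partial_{\tau_j})=d\ell_j(t_{\alpha_i})=\partial\ell_j/\partial\tau_i=0$, since the FN length coordinates are functionally independent of the twist coordinates. Writing $\omega = \sum_j d\ell_j\wedge d\tau_j + \eta$ with $\eta=\sum_{i<j}a_{ij}\,d\ell_i\wedge d\ell_j$, closedness $d\omega=0$ combined with invariance of $\omega$ under each twist flow $t_{\alpha_k}$ (an immediate consequence of Cartan's formula together with duality) forces $\partial a_{ij}/\partial\tau_k \equiv 0$. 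Hence the $a_{ij}$ depend only on the lengths, and $\eta$ is the pullback of a closed 2-form from the $\ell$-base.

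The heart of the argument is then to show $\eta\equiv 0$ by a symmetry reduction: exhibit the FN reference section $S=\{\tau_j=0\}$ as a Lagrangian submanifold of $\caT$. The plan is to construct an anti-holomorphic involution $\iota:\caT\to\caT$ whose fixed locus is exactly $S$. Fix an orientation-reversing involution $c_F$ of the reference topological surface $F$ that preserves each pants curve setwise and reflects each pants piece across its orthogeodesic seams. Sending a marked surface $[f,R]$ to $[f\circ c_F^{-1},\overline{R}]$, where $\overline{R}$ denotes the complex-conjugate Riemann surface, defines an anti-holomorphic self-map $\iota$ of $\caT$ whose action in FN coordinates is $(\ell_j,\tau_j)\mapsto(\ell_j,-\tau_j)$, with fixed locus exactly $S$. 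Because $\iota$ preserves the WP K\"{a}hler metric while reversing the complex structure, $\iota^*\omega=-\omega$; restriction to $S$ then gives $\omega|_S=-\omega|_S$, hence $\omega|_S=0$. Since $\eta$ has no $d\tau$ components and $a_{ij}$ is $\tau$-independent, $\eta|_S$ coincides with $\eta$ itself on the $\ell$-base, so $a_{ij}\equiv 0$ and the desired formula follows.

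The main obstacle I foresee is not the algebra but the bookkeeping for markings: orientation-reversing self-maps of $F$ lie outside the mapping class group, so one must verify both that a reflection $c_F$ compatible with the chosen pants decomposition exists for every $(g,n)$ (a cut-and-paste argument on pants) and that the resulting $\iota$ genuinely acts as $\tau_j\mapsto -\tau_j$ rather than by some twist-dependent shift. Once these geometric details are in place, the anti-holomorphic isometry of $\iota$ for the WP metric is automatic from the definition of the Petersson pairing, and the proof is complete.
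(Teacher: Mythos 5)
Your proposal is correct and takes essentially the approach the paper points to: the theorem is quoted there without proof from \cite{Wlcbms}, but the paper's lead-in (twist--length duality $\omega(\ ,t_{\alpha})=d\ell_{\alpha}$, twist invariance via Cartan's formula, and ``symmetry reasoning'' giving action--angle coordinates) is exactly the skeleton you flesh out. Your block computation plus the anti-holomorphic reflection fixing the zero-twist section, which forces the residual $d\ell_i\wedge d\ell_j$ coefficients to vanish, is precisely that symmetry argument in Wolpert's proof, so there is no genuine gap.
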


Frontier spaces are adjoined to $\caT$ corresponding to allowing $\ell_j=0$ with the FN angle $\theta_j=2\pi\tau_j/\ell_j$ then undefined (in polar coordinates, angle is undefined at the origin).  The vanishing length describes pairs of pants with corresponding boundaries represented by punctures - the equation $\lla=0$ describes hyperbolic structures with $\alpha$ represented by pairs of punctures.  For a subset of indices $J\subset\{1,\dots,3g-3+n\}$, the $J$-null stratum is $\caS(J)\,=\,\{R\mbox{ degenerate}\mid \ell_j(R)=0\mbox{ iff } j\in J\}$.  Each null stratum is a product of lower dimensional Teichm\"{u}ller spaces.  A basis of neighborhoods in $\caT\cup\caS(J)$ is defined in terms of the parameters $(\ell_k,\theta_k,\ell_j)_{k\notin J,\, j\in J}$. 

The augmented Teichm\"{u}ller space is the stratified space 
\[
\Tbar=\caT\cup_{\tiny{pants\ decompositions\ }\caP}\cup_{J\subset\caP}\,\caS(J).
\]  
The space $\Tbar$ is also described as the Chabauty topology closure of the discrete faithful type-preserving representations of $\pi_1(F)$ into $PSL(2;\mathbb R)$, modulo $PSL(2;\mathbb R)$ conjugation.  The augmentation construction is valid for $\Tg,\TgL$ and $\Tgn$.  $\Tbar$ is a Baily-Borel type partial compactification.  $\Tbar$ is never locally compact.  The $d\ell\wedge d\tau$ formula provides for the extension of the symplectic structure to the augmented Teichm\"{u}ller space $\Tbar$.  Each strata is symplectic.   

The mapping class group ($\mcg$) $Homeo^+(F)/Homeo_0(F)$ acts on markings by precomposition and thus acts on $\caT$.  For $Homeo^+(F)$ we consider type-preserving (boundary point, boundary curve), boundary label preserving, orientation preserving homeomorphisms.  $Homeo_0(F)$ is the normal subgroup of elements homotopic to the identity rel boundary setwise.  A Dehn twist is a homeomorphism that is the identity on the complement of a tubular neighborhood of a simple closed curve, non trivial in homotopy, and rotates one boundary of the tubular neighborhood relative to the other.  Dehn twist classes generate $\mcg$. $\mcg$ acts properly discontinuously on $\caT$ and by biholomorphisms for $\Tg$ and $\Tgn$.  Except for a finite number of topological types the action is effective.  Finite $\mcg$ subgroups act with fixed points.  $\mcg$ acts on the stratified space $\Tbar$.  Bers observed that there are constants $b_{g,n}$, depending on topological type, such that a genus $g$, $n$ punctured hyperbolic surface has a pants decomposition with seam lengths at most $b_{g,n}$.   It follows that the domain $\{\ell_j\le b_{g,n},\, 0<\tau_j\le \ell_j\}$ in FN coordinates is a rough fundamental set - each $\mcg$ orbit intersects the domain a bounded positive number of times.  $\Tbar/\mcg$ is a compact real analytic orbifold; $\Tbarg/\mcg$ and $\Tbargn/\mcg$ are topologically the Deligne-Mumford stable curve compactifications of $\Mg$ and $\Mgn$.  The Bers fundamental set observation combines with the $d\ell\wedge d\tau$ formula to provide that the WP volume of $\caM$ is finite.

The Bers fiber space $\caC$ (specifically $\Cg$ and $\Cgn$) is the complex disc holomorphic bundle over $\caT$ with fiber over $\{(f,R)\}$ the universal cover $\widetilde R$. A point on a fiber can be considered as a puncture and determines a curve from basepoint for the  fundamental group - so $\Cg\approx\caT_{g,1}$ and $\Cgn\approx\caT_{g,n+1}$. An extension $\mcg_{\mathcal C}$ of $\mcg(F)$ by the fundamental group $\pi_1(F)$ acts properly discontinuously and holomorphically on $\mathcal C$.  The group  $\mcg_{\mathcal C}$ is isomorphic to $\mcg_{g,n+1}$.  For the epimorphism from $\mcg_{\mathcal C}$ to $\mcg(F)$, the first group acts equivariantly on the fibration of $\caC$ over $\caT$.   The resulting map $\pi:\caC/\mcg\rightarrow\caT/\mcg$ describes an orbifold bundle, the universal curve, with orbifold fibers - the fibers are Riemann surfaces modulo their full automorphism group.  Manifold finite local covers and the quotient can be described by starting with a surface with locally maximal symmetries and introducing a local trivialization of the bundle by canonical (extremal, harmonic) maps of surfaces.

The augmentation construction applies to the Bers fiber space to give $\Cbar$.  $\mcg_{\mathcal C}$ acts on the stratified space $\Cbar$.        
\begin{figure}[htbp] % float placement: (h)ere, page (t)op, page (b)ottom, other (p)age
  \centering
  % file name: C:/Documents and Settings/Scott/Desktop/MyFiles/CBMS/CBMSbook/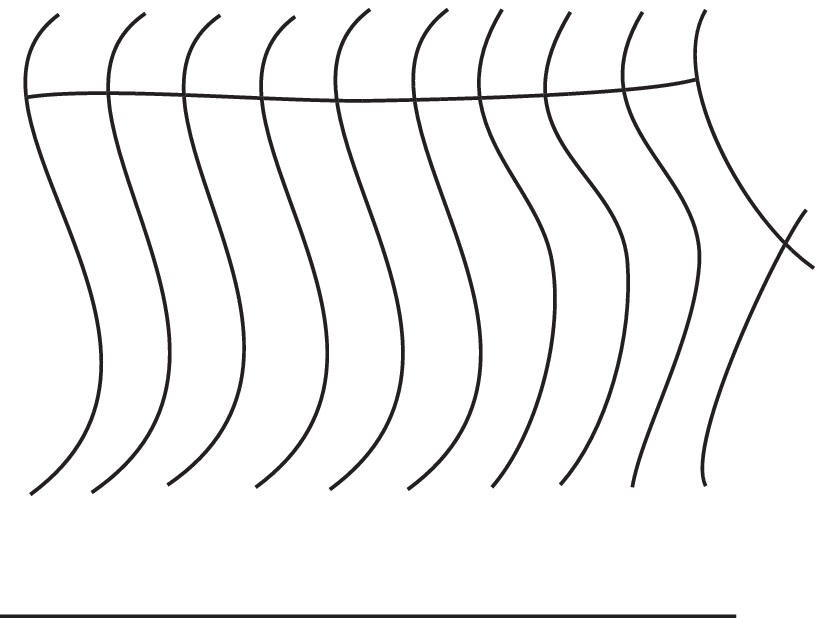
  \includegraphics[bb=0 0 228 196,width=2.5in,height=2.15in,keepaspectratio]{univcurvs}
  \caption{A puncture section of the universal curve $\Cbar/\mcg_{\mathcal C}$ over  $\Tbar/\mcg(F)$.}
  \label{fig:univcurvs}
\end{figure}
\noindent The augmentation quotient $\Cbar/\mcg$ is an orbifold and almost an orbifold bundle over $\Mbar$ - at a node (a pair of punctures) of a Riemann surface, the fiber becomes vertical - the local model of the fibration is the germ at the origin of the projection $\{(z,w)\}\rightarrow \{t=zw\}$, the family of complex hyperbolas. The turning of the fibers of the almost orbifold bundle $\Cbar/\mcg\rightarrow\Mbar$ is measured by line bundles on $\Mbar$.  The family of tangent $\mathbb C$-lines to the fibers $(Ker\,d\pi)$ is the tangent bundle along a smooth Riemann surface and the relative dualizing sheaf along a noded Riemann surface.  The hyperbolic metrics of the individual fibers provide a line bundle metric for $(Ker\,d\pi)$ on $\Cbar$, that although not smooth is sufficiently regular for calculation of the Chern form $\mathbf c_1$.  The kappa forms/cohomology classes $\kappa_k=\int_{\pi^{-1}(\{R\})}\mathbf c_1^{k+1}$ given by integration over fibers are basic to moduli geometry.  The geometry and algebra of the kappa classes is studied in the Carel Faber lectures.  Explicit calculation of the Chern form and integration provides the following.
\begin{figure}[htbp] % float placement: (h)ere, page (t)op, page (b)ottom, other (p)age
  \centering
  % file name: C:/Documents and Settings/Scott/Desktop/MyFiles/CBMS/CBMSbook/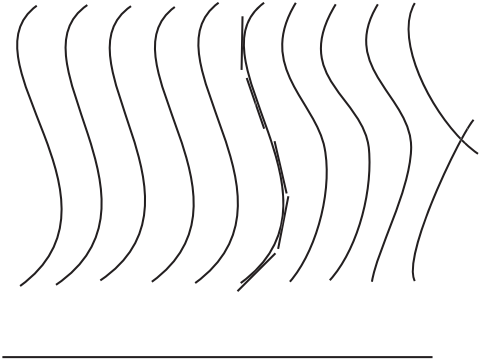
  \includegraphics[bb=0 0 235 176,width=2.5in,height=1.87in,keepaspectratio]{univcurvt}
  \caption{Tangents along a fiber of the universal curve $\Cbar/\mcg_{\mathcal C}$.}
  \label{fig:univcurvt}
\end{figure}
\begin{thrm*}\textup{(W), \cite{Wlhyp}.} \label{wpkap} For the hyperbolic metrics on fibers, $2\pi^2\kappa_1=\omega$ pointwise on $\caM$ and in cohomology on $\Mbar$. 
\end{thrm*}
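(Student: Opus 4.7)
My plan is to compute the Chern form of the Hermitian line bundle $(Ker\,d\pi)$ on $\Cbar/\mcg_{\caC}$ carrying the hyperbolic fiber metric, form its square, push forward along $\pi$, and compare with the $d\ell\wedge d\tau$ formula for $\omega$.

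In local coordinates $(z,t)$ near a point of the Bers fiber space, with $z$ a fiber coordinate and $t=(t_1,\dots,t_{3g-3+n})$ parameters on the base, write the hyperbolic fiber metric as $\rho(z,t)^2\,|dz|^2$. Curvature $-1$ is the Liouville equation $4\partial_z\partial_{\bar z}\log\rho = \rho^2$, and the Chern form of the Hermitian metric $h=\rho^2$ on $(Ker\,d\pi)$ is
\[
\mathbf c_1 \,=\, -\tfrac{1}{2\pi i}\partial\bar\partial\log\rho^2.
\]
Restricted to a single fiber this reduces to $\tfrac{1}{2\pi}$ times the hyperbolic area form, giving the Gauss-Bonnet total $2g-2+n$ upon integration, a useful consistency check. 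Forming $\mathbf c_1\wedge\mathbf c_1$ and integrating the $dz\wedge d\bar z$ component over the Riemann surface produces a smooth $(1,1)$-form on $\caT$ representing $\kappa_1$.

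To match $\omega$, I would evaluate this $(1,1)$-form on the Fenchel-Nielsen frame $(\partial_{\ell_j},\partial_{\tau_j})$. Along a twist or length deformation the infinitesimal variation of $\log\rho$ satisfies an inhomogeneous elliptic equation whose source is built explicitly from the harmonic Beltrami representative of the FN deformation (the horizontal lift of $t_\alpha$ or $\partial_{\ell_\alpha}$). Substituting the variation into the mixed fiber-base components of $\mathbf c_1^2$ and integrating by parts against the Liouville equation reduces each fiber integral to an $L^2$ pairing of FN Beltrami representatives. Evaluating on the standard FN basis yields the value $\tfrac{1}{2\pi^2}\delta_{ij}$ on $(\partial_{\ell_j},\partial_{\tau_i})$, matching the $d\ell\wedge d\tau$ formula for $\tfrac{1}{2\pi^2}\omega$ and establishing $2\pi^2\kappa_1=\omega$ pointwise on $\caM$.

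The hard part will be promoting the pointwise identity to cohomology on $\Mbar$. The hyperbolic metric blows up along each nodal boundary divisor with the plumbing asymptotic $\rho \sim |z|^{-1}\,|\log|z||^{-1}$, so $\mathbf c_1$ is not smooth near the boundary and $\mathbf c_1^2$ is only defined as a distribution. I would handle this by showing $\mathbf c_1$ extends as a closed positive current of Poincar\'e growth, verifying that $\pi_*(\mathbf c_1^2)$ extends as a current representing $\kappa_1\in H^2(\Mbar)$, and then proving that the residues of $2\pi^2\kappa_1-\omega$ along each boundary divisor vanish by induction on the number of nodes, using the pointwise identity applied on the (normalized) lower-dimensional strata that appear as boundary components.
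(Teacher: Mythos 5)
Your overall route is the same one the paper points to: these notes do not prove the statement at all, but cite \cite{Wlhyp}, describing the argument exactly as you set it up --- compute the Chern form of the vertical line bundle for the fiberwise hyperbolic metric, push its square forward along $\pi$, and identify the result with the WP form. On the open moduli space your plan is sound and your constants are consistent (the value $\tfrac{1}{2\pi^2}\delta_{ij}$ on $(\partial_{\ell_j},\partial_{\tau_i})$ is precisely $2\pi^2\kappa_1=\omega$ tested against $\omega=\sum_j d\ell_j\wedge d\tau_j$), and the sign ambiguity between the tangent line and the relative dualizing sheaf is harmless since only $\mathbf c_1^2$ enters. One remark: the Fenchel--Nielsen detour is redundant. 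The hard content is the reduction of the mixed base--fiber terms of $\mathbf c_1^2$, via the variational formulas for the hyperbolic metric, to the $L^2$ pairing of Beltrami representatives; Wolpert carries this out for arbitrary harmonic Beltrami differentials, at which point the pushforward is already identified with the WP K\"ahler form (i.e.\ $\omega/2$) and no appeal to the $d\ell\wedge d\tau$ theorem is needed. In your version that reduction is asserted rather than performed, and it is where all the work lies.

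The genuine soft spot is the passage to cohomology on $\Mbar$. Writing the difference of the extended closed currents $2\pi^2\kappa_1-\omega$ as a combination $\sum_i c_i[\Delta_i]$ of boundary divisors is the right framework, but the coefficients $c_i$ cannot be obtained ``by induction on the number of nodes, using the pointwise identity on the lower-dimensional strata'': the restriction of such a current to a positive-codimension stratum is not defined, and $c_i$ is transverse data invisible to the stratum. What the argument actually requires is (i) that the trivial extension of $\omega$ across the compactification divisors is closed and carries no divisor mass, which rests on the known boundary estimates/continuity of the extended WP form, (ii) matching transverse control of $\pi_*(\mathbf c_1^2)$ near the divisors, coming from the expansions of degenerating hyperbolic metrics --- the asymptotic $\rho\sim|z|^{-1}|\log|z||^{-1}$ you quote is only the starting point, since the mixed base--fiber components must be estimated after fiber integration, and the square of a singular form is not a priori a current, and (iii) a comparison with a smooth background metric on the relative dualizing sheaf, showing the correction potential is integrable enough that the singular Chern--Weil representative still computes $\kappa_1\in H^2(\Mbar;\mathbb Q)$ after pushforward. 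These are exactly the analytic points supplied in \cite{Wlhyp}; replace the restriction-to-strata step by such transverse (Lelong-number/mass) estimates and your outline becomes the standard proof.
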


A conformal structure has a unique extension to fill in a puncture.  A labeled puncture defines a section $s$ of $\Cbar_{g,n}/\mcg\rightarrow\Mbargn$.  A section satisfies $\pi\circ s=id$, differentiating gives $d\pi\circ ds=d\,id$.  At a node, $d\pi$ vanishes in the node opening direction - for $\pi(z,w)=t$ then $d\pi=wdz\,+\,zdw$ vanishes at the origin.  Sections of $\Cbar/\mcg$ over $\Mbar$ are consequently disjoint from nodes.  Along a puncture section $s:\Mbar\rightarrow\Cbar/\mcg$, we consider the family of tangent lines $(Ker\, d\pi)|_s$ or the dual family  $(Ker\, d\pi)^*|_s$.  In the Carel Faber lectures, the Chern class is denoted as $K$.
 -The pullback to $\Mbar$ by a puncture section $s$ of the dual family $(Ker\, d\pi)^*$ is the moduli geometry canonical psi class $\psi$ .-  

In these lectures, to emphasize concepts and the underlying geometry, we will at times informally interchange a line bundle and its Chern class, informally refer to the moduli space as a manifold, the universal curve as a fiber bundle, and at times refer to the open moduli space when actually the augmentation quotient is required.  Our goal is to discuss the central matters.  In spite of the informal approach, an experienced reader will find that the treatment is complete.  

Basic references for the above material are \cite{Busbook} and \cite{Wlcbms}.

\noindent {\bfseries Volume results overview.}    

Mirzakhani shows that the WP volume $V_{g,n}(L)\,=\,V(\MbargL)$ is a polynomial in $L$, with coefficients given by the intersection numbers of powers of $\kappa_1$ and powers of $\psi$.  She further shows that her recursion for determining the volume polynomials satisfies the defining relations for the Witten-Kontsevich conjecture.  The following theorems are the immediate results of the two papers. 

\begin{thrm*} \textup{The WP volume polynomials.}\label{first}  The volume polynomials are determined recursively from the volume polynomials of smaller total degree, \cite[Formula (5.1) \& Theorem 8.1]{Mirvol}.   The volume $V_{g,n}(L_1,\dots,L_n)$ of the moduli space of genus $g$, $n$ boundaries, hyperbolic surfaces with boundary lengths $L=(L_1,\dots,L_n)$ is a polynomial
\[
V_{g,n}(L)\,=\,\sum_{\stackrel{\alpha}{|\alpha|\le 3g-3+n}} C_{\alpha}\,L^{2\alpha},
\]
for multi index $\alpha=(\alpha_1,\dots,\alpha_n)$ and where $C_{\alpha}>0$ lies in $\pi^{6g-6+2n-2|\alpha|}\mathbb Q$, \cite[Theorems 1.1 \& 6.1]{Mirvol}.  The coefficients are intersection numbers given as 
\[
C_{\alpha}=\frac{2^{\delta_{1g}\delta_{1n}}}{2^{|\alpha|}\alpha!(3g-3+n-|\alpha|)!}
\,\int_{\overline{\caM}_{g,n}}\psi_1^{\alpha_1}\cdots\psi_n^{\alpha_n}\omega^{3g-3+n-|\alpha|},
\]
where $\psi_j$ is the Chern class for the cotangent line along the $j^{th}$ puncture, $\omega$ is the symplectic form, $\alpha!=\prod_{j=1}^n\alpha_j!$, and $\delta_{**}$ is the Kronecker indicator delta, \cite[Theorem 4.4]{Mirwitt}. 
\end{thrm*}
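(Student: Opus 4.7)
The plan is to prove the theorem in two parts, mirroring the two Mirzakhani papers and the organizational outline. First, I will establish the polynomial structure of $V_{g,n}(L)$, together with the asserted $\pi$-content and positivity, by inducting on the dimension $d = 3g-3+n$ via the Mirzakhani volume recursion. Second, I will identify the coefficients $C_\alpha$ as intersection numbers by setting up the symplectic-reduction picture for the family $\overline{\widehat{\caM}}_g(L)$ and invoking the Duistermaat-Heckman normal form.

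For the polynomial structure, I will run the recursion $\partial_{L_1}(L_1 V_{g,n}(L)) = \caA^{conn}(L) + \caA^{disc}(L) + \caB(L)$ stated in the outline. Its right-hand side is a finite sum of integrals, against the McShane-Mirzakhani kernels $\caD$ and $\caR$, of products of volumes of moduli spaces of strictly smaller dimension. Starting from the base cases $V_{0,3}(L) = 1$ and $V_{1,1}(L) = \pi^2/6 + L^2/24$, I induct on $d$ and assume polynomiality, positivity, and the $\pi^{6g-6+2n-2|\alpha|}\mathbb{Q}$ content for smaller $d$. The kernels $\caD, \caR$ are built from the rational exponential $H$; the key analytic input is that the moment integrals $\int_0^\infty x^{2k+1} H(t,x)\,dx$, and the analogous integrals involving $\caR$, are polynomials in $t$ whose coefficients are rational multiples of even zeta values $\zeta(2m) \in \pi^{2m}\mathbb{Q}$. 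Substituting the inductive polynomial expressions for the lower volumes, convolving against these kernels, and bookkeeping degrees then produces a polynomial in the $L_j$'s with only even powers, positive rational coefficients times the correct power of $\pi$, and total degree matching the asserted bound $|\alpha|\le 3g-3+n$. Integrating in $L_1$ with the boundary condition at $L_1=0$ recovers $V_{g,n}(L)$ in the claimed form.

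For the intersection-number identification, I follow the symplectic-reduction program of the second paper. Introduce the enlarged Teichm\"uller space $\widehat{\caT}_g(L)$ of hyperbolic surfaces with a marked point on each geodesic boundary, equipped with the symplectic form defined by the almost-tight-pants construction. The $(S^1)^n$-action twisting the boundary marked points has moment map $R \mapsto (L_1^2/2, \dots, L_n^2/2)$, so symplectic reduction at that level returns $\caT_g(L)$ with its WP form $\omega$, while reduction in the limit $L \to 0$ returns $\caT_{g,n}(0)$ with its WP form. The Duistermaat-Heckman theorem then yields the cohomological identity
\[
2\omega_{\overline{\caM}_g(L)}\ \equiv\ 2\omega_{\overline{\caM}_{g,n}(0)}\ +\ \sum_{j=1}^n \frac{L_j^2}{2}\psi_j\qquad \mbox{in}\ H^2(\overline{\caM}_{g,n}),
\]
in which each $\psi_j$ is the Chern class of the $(S^1)^n$-principal bundle $\widehat{\caT}_g(L) \to \caT_g(L)$, identified with the tautological cotangent class at the $j$-th puncture via the puncture section of the universal curve and a reduction of structure group from $S^1$ to $\mathbb{C}^*$. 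Liouville's theorem applied to both sides, multinomial expansion of $(\omega + \sum_j L_j^2 \psi_j/2)^d/d!$, and matching coefficients of $L^{2\alpha}$ then produce the asserted formula for $C_\alpha$; the factor $2^{\delta_{1g}\delta_{1n}}$ accounts for the generic $\mathbb{Z}/2$ orbifold stabilizer of $\overline{\caM}_{1,1}$.

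The principal obstacle will be the foundational half of the symplectic-reduction step: one must construct the symplectic form on $\widehat{\caT}_g(L)$ intrinsically, verify that the boundary-length-squared map really is a moment map for the twist action, and push the entire picture across the Deligne-Mumford augmentation so that the Duistermaat-Heckman cohomology equation is meaningful and the integrals converge on the compactification. A closely related subtlety is pinning down the identification of the $(S^1)^n$-bundle Chern classes with the tautological $\psi_j$ classes, which requires the homotopic structure-group argument sketched in the outline together with the observation that along a puncture section the tangent line to the universal curve fiber is precisely the complexification of the circle fiber. Once this foundational package is assembled, the extraction of the intersection-number formula for $C_\alpha$ from the Duistermaat-Heckman identity is formal multinomial bookkeeping.
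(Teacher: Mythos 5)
Your proposal follows essentially the same route as the paper: polynomiality, positivity and the $\pi$-content by induction through the volume recursion (with the elementary integrals of $H$ giving rational multiples of even zeta values), and the coefficient identification via the pointed-boundary space $\widehat{\caT}_g(L)$, the moment map $L^2/2$, the Duistermaat--Heckman normal form, and the identification of the boundary circle bundles with the $\psi_j$ classes, followed by multinomial expansion of $\frac{1}{d!}\big(\omega+\sum_j\frac{L_j^2}{2}\psi_j\big)^d$. The only cosmetic difference is bookkeeping of the elliptic-involution factor (the paper halves $V_{1,1}$ in the recursion, you carry the full value and absorb the factor $2^{\delta_{1g}\delta_{1n}}$ into the orbifold stabilizer of $\overline{\caM}_{1,1}$), which is an equivalent convention.
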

\begin{thrm*} \cite[Theorems 6.3 \& 6.4]{Mirvol}.  \textup{Recursive relations for the volume polynomial leading coefficients.} For a multi index $\alpha$, define 
\[
(\alpha_1,\dots,\alpha_n)_g=C_{\alpha}\times2^{-\delta_{1g}\delta_{1n}}\times\prod_{i=1}^n\alpha_i!\times2^{|\alpha|},
\]
then for $n>0$ and $\sum_i\alpha_i=3g-3+n$,
\[
\mbox{the dilaton equation}\qquad\quad (1,\alpha_1,\dots,\alpha_n)_g\,=\,(2g-2+n)(\alpha_1,\dots,\alpha_n)_g
\]
and for $n>0$ and $\sum_i\alpha_i=3g-2+n$,
\[
\mbox{the string equation}\qquad (0,\alpha_1,\dots,\alpha_n)_g\,=\,\sum_{\alpha_i\ne 0}(\alpha_1,\dots,\alpha_i-1,\dots,\alpha_n)_g.
\]
For the intersection number generating function
\[
\mathbf{F}(\lambda,t_0,t_1,\dots)\,=\,\sum_{g=0}^{\infty}\lambda^{2g-2}\sum_{\{d_j\}}\,\langle\prod_{j=1}^{\infty}\tau_{d_j}\rangle_g\,\prod_{r\ge0}t_r^{n_r}/n_r!\,,
\]
with $n_r=\#\{j\mid d_j=r\}$, and
\[
\langle \tau_{d_1}\cdots\tau_{d_{n}}\rangle_g\,=\,\int_{\overline{\caM}_{g,n}}\prod_{j=1}^{n}\psi_j^{d_j},
\]
for $\sum_{j=1}^{n}d_j=3g-3+n$ and the product $\langle\tau_*\rangle$ otherwise zero, then the exponential $e^{\mathbf F}$ satisfies Virasoro algebra constraints, \cite[Theorem 6.1]{Mirwitt}. 
\end{thrm*}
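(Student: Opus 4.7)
The plan is to derive all three families of relations --- dilaton, string, and Virasoro --- from Mirzakhani's volume recursion $\frac{\partial}{\partial L_1}(L_1 V_{g,n}(L)) = \caA_{g,n}^{connected}(L) + \caA_{g,n}^{disconnected}(L) + \caB_{g,n}(L)$ by matching polynomial coefficients, combined with the symplectic-reduction formula that identifies $C_\alpha$ as an intersection number. The starting observation is that when $|\alpha|=3g-3+n$ the $\omega$-exponent in the intersection formula for $C_\alpha$ vanishes, so that the normalized leading coefficients satisfy $(\alpha_1,\dots,\alpha_n)_g = \langle\tau_{\alpha_1}\cdots\tau_{\alpha_n}\rangle_g$. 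Thus the dilaton and string equations are statements about the top-degree part of the polynomials on both sides of the recursion, and the Virasoro constraints should encode the full family of such top-degree matches indexed by successive $L_1$-derivatives.

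Next, the plan is to extract the coefficient of $L_{n+1}^{2\alpha_0}$ --- with $\alpha_0=1$ for the dilaton equation, $\alpha_0=0$ for the string equation --- on both sides of the recursion applied to $V_{g,n+1}(L_{n+1},L_1,\dots,L_n)$, working modulo strictly lower degree. The integrals $\caA^{connected}_{g,n+1}$, $\caA^{disconnected}_{g,n+1}$, and $\caB_{g,n+1}$ are polynomials whose coefficients are special values of Riemann zeta times lower-dimensional volumes; only the top-degree parts of those lower volumes survive in the top-degree balance, and these are again pure $\psi$-intersection numbers. The combinatorial sum over pants configurations at the new boundary contributes the sum over $i$ in the string equation through the $\caR$-term (which pairs the new boundary with an existing $L_i$), while it contributes the Euler-characteristic factor $(2g-2+n)$ in the dilaton equation by counting the topological configurations of pants adjacent to the new boundary.

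To promote this to the full Virasoro constraints on $e^{\mathbf F}$, the plan is to repackage the coefficient recursion as a PDE in the formal variables $t_k$, under the substitution that sends each $L_j^{2k+2}/(2k+1)!$ to $t_k$. Extracting the coefficient of $L_{n+1}^{2k+2}$ for each $k\ge -1$ from the recursion yields a differential operator $\mathbf L_k$ annihilating $e^{\mathbf F}$: the $k=-1$ operator reproduces the string equation, the $k=0$ operator reproduces the dilaton equation, and for $k\ge 1$ the higher-order contributions of $\caA^{connected}_{g,n+1}$ and $\caA^{disconnected}_{g,n+1}$ (via the Taylor expansions of $\caD$ and $\caR$) supply the quadratic terms in $t_k$ required for the standard Virasoro form.

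The hard part will be the bookkeeping: tracking the factorials and the normalizations $2^{|\alpha|}$ and $2^{\delta_{1g}\delta_{1n}}$ relating $C_\alpha$ to $(\alpha)_g$, controlling the zeta-value coefficients produced by the $\caD$- and $\caR$-kernel integrals, and verifying that the resulting operators $\mathbf L_k$ actually satisfy the Virasoro bracket $[\mathbf L_j,\mathbf L_k]=(j-k)\mathbf L_{j+k}$ rather than a weaker set of recursions. This is where Mirzakhani's argument does its real work, and it is the step at which her volume recursion, initially analytic in origin, is revealed as the Virasoro constraints of Witten--Kontsevich.
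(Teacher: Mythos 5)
Your proposal follows essentially the same route as the paper: specialize Mirzakhani's recursion to top-degree coefficients (leading coefficients feed only into leading coefficients), identify those coefficients with $\psi$-intersection numbers via the symplectic-reduction formula for $C_\alpha$, and read off the string/dilaton equations together with the explicit double-factorial (DVV-type) coefficient relation whose rearrangement is exactly $L_{k-1}(e^{\mathbf F})=0$. One small caution on your heuristic: the dilaton factor $(2g-2+n)$ does not come from counting pants configurations at the new boundary, but from the weighted sum $\sum_j(2\alpha_j+1)=2(3g-3+n)+n$ over the $\caB$-type terms divided by $(2\cdot1+1)!!=3$; your coefficient-extraction plan does produce this once the bookkeeping is carried out.
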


A fine structure for volumes is suggested by the Zograf conjecture
\[
V_{g,n}\,=\,(4\pi^2)^{2g+n-3}(2g+n-3)!\frac{1}{\sqrt{g\pi}}\big(1\,+\,\frac{c_n}{g}\,+\,O(1/g^2)\big),
\]
for fixed $n$ and $g$ tending to infinity.

As an application of the method for recursion of volumes and intersection numbers, Do derives a {\em remove a boundary} relation for the volume polynomials
\[
\frac{\partial V_{g,n+1}}{\partial L_{n+1}}(L,2\pi i)\,=\,2\pi i(2g-2+n)V_{g,n}(L), \qquad\cite{Dothe}.
\]
The relation gives the compact case volume $V_g$.

\noindent {\bf Statement of the volume recursion} \cite[Sec. 5]{Mirvol}{\bf. }\hypertarget{volrecur}   The WP volume $V_g(L_1,\dots,L_n)$ of the moduli space $\caT_g(L_1,\dots,L_n)/\mcg$ is a symmetric function of boundary lengths as follows.
\begin{itemize}
  \item For $L_1,L_2,L_3 \ge 0$, formally set\\
\[
 V_{0,3}(L_1,L_2,L_3)=1
\]
and\\ 
\[
V_{1,1}(L_1)=\frac{\pi^2}{12}+\frac{L_1^2}{48}.
\]
  \item For $L=(L_1,\dots,L_n)$, let $\widehat L=(L_2,\dots,L_n)$ and for $(g,n)\ne (1,1)$ or $(0,3)$, the volume satisfies\\
\[ 
\frac{\partial\ }{\partial L_1}L_1V_g(L)=\caA_g^{con}(L)+\caA_g^{dcon}(L)+\caB_g(L)
\]
where
\[
\caA_g^*(L)=\frac12\int_0^{\infty}\int_0^{\infty}\widehat\caA_g^*(x,y,L)\,xy\,dxdy
\]
and
\[
\caB_g(L)=\int_0^{\infty}\widehat\caB_g(x,L)\,x\,dx.
\]
The quantities $\widehat\caA_g^{con},\,\widehat\caA_g^{dcon}$ are defined in terms of the function 
\[
H(x,y)=\frac{1}{1+e^{\frac{x+y}{2}}}+\frac{1}{1+e^{\frac{x-y}{2}}}
\]
and moduli volumes for subsurfaces
\[
\widehat\caA_g^{con}(x,y,L)=H(x+y,L_1)V_{g-1}(x,y,\widehat L)
\]
and surface decomposition sum
\[
\quad\quad\quad\widehat\caA_g^{dcon}(x,y,L)=\sum\limits_{\stackrel{g_1+g_2=g}{I_1\amalg I_2=\{2,\dots,n\}}} H(x+y,L_1)
V_{g_1}(x,L_{I_1})V_{g_2}(y,L_{I_2}),
\]
where in the second sum only decompositions for pairs of hyperbolic structures are considered and the unordered sets $I_1,I_2$ provide a partition.  The third quantity $\widehat\caB_g$ is defined by the sum
\[
\quad\quad\quad\quad \frac12 \sum_{j=1}^n\big(H(x,L_1+L_j)+H(x,L_1-L_j)\big)\,V_g(x,L_2,\dots,\widehat{L_j},\dots,L_n),
\]
where $L_j$ is omitted from the argument list of $V_g$.
\end{itemize}
{\em The basic point:} the volume $V_g(L_1,\dots,L_n)$ is an appropriate integral of volumes for surfaces formed with {\em one} fewer pairs of pants.

\noindent\hypertarget{lec2}{{\bfseries Lecture 2: The McShane-Mirzakhani identity.}}

In 1991, Greg McShane discovered a universal identity for a sum of lengths of simple geodesics for a once punctured torus, \cite{McSh}.  A generalization of the identity serves as the analog of a partition of unity for the action of the mapping class group.  The identity enables reduction of the action to the actions of smaller mapping class groups.  Consideration of the identity begins with a surface with geodesic boundaries and a study of arcs from the boundary to itself.  

Introduce $\caB$, the set of non trivial free homotopy classes of simple curves from the boundary to the boundary, homotopy rel the boundary.  We illustrate the approach by considering simple curves with endpoints on a common boundary $\beta$; the analysis is similar for simple curves connecting distinct boundaries.  Each homotopy class contains a unique shortest geodesic, orthogonal to $\beta$ at end points - refer to these geodesics as {\em ortho boundary geodesics}. If the surface is doubled across its boundary, then the ortho boundary geodesics double to simple closed geodesics.  

The set $\caB$ is in bijection to the set of topological pants embedded in the surface with $\beta$ as one boundary - refer to these pants as $\beta$-cuff pants.  First note that the endpoints of an ortho boundary geodesic $\gamma,[\gamma]\in\caB$ are distinct.  A small neighborhood/thickening of $\gamma\cup\beta$ is the corresponding topological pair of pants.  Geometrically, the curve $\gamma$ separates $\beta$ into proper sub arcs; the union of each sub arc with $\gamma$ is a simple curve, that defines a free homotopy class containing a unique geodesic. The corresponding geometric pair of pants $\caP$ has boundaries $\beta$ and the two determined geodesics.  We will see below that a geometric pair of pants contains a unique ortho boundary geodesic. The unions of ortho boundary geodesics and $\beta$ are the spines, the wire frames, for the embedded geometric $\beta$-cuff pants.

We now describe how the behavior of geodesics emanating orthogonally from $\beta$ defines a Cantor subset of $\beta$.  The Cantor set will have measure zero and the length identity is  simply the sum of lengths of the complementary intervals.   The following description follows the analysis by Tan-Wong-Zhang, \cite{TanWZ}.

Consider the maximal continuations of geodesics emanating orthogonally from $\beta$ - refer to these geodesics as {\em ortho emanating geodesics}. In addition to the ortho boundary geodesics, there are three types of ortho emanating geodesics: non simple, simple infinite length and simple crossing the boundary obliquely at a second endpoint.  We will see that the types are detected by considering initial segments in a pair of pants.

Consider the geometric pants $\caP$, obtained from an ortho boundary geodesic $\gamma$ (see Figure \ref{fig:maingap}).  The boundaries are $\beta$ and the two defined geodesics $\alpha$ and $\lambda$ (in the special case $(g,n)=(1,1)$ then $\alpha=\lambda$).  A {\em spiral} is an infinite simple geodesic ray that accumulates to a simple closed geodesic.  Two ortho emanating geodesics are spirals with accumulation set $\alpha$ and two are spirals with accumulation set $\lambda$.  The two spirals accumulating to a boundary wind in opposite directions around the boundary.  The $\caP$ {\em main gaps} are the two disjoint subarcs of $\beta$ that each contain in their interior an endpoint of $\gamma$ and have spiral initial points as endpoints. The main gaps will be the components of the Cantor set complement corresponding to the pants $\caP$.  From the geometry of pants, geodesics ortho emanating from the main gaps are either the spiral endpoints, $\gamma$, non simple with self intersection in $\caP$ or simple crossing $\beta$ obliquely at a second endpoint.  The complement in $\beta$ of the main gaps are a pair of open intervals.  From the geometry of pants, for a given open interval all ortho emanating geodesics exit the pants by crossing one of the boundaries $\alpha$ or $\lambda$.  For a given open interval, the initial segments in $\caP$ are simple and these geodesics are classified by their subsequent behavior elsewhere on the surface, by their behavior on some other pair of pants. The ortho boundary geodesic connecting $\beta$ to $\alpha$ is contained in one of the open intervals, and the ortho boundary geodesic from $\beta$ to $\lambda$ is contained in the other.  A pair of pants has an equatorial reflection, stabilizing each boundary.  The equatorial reflection acts naturally on the decomposition of $\beta$, interchanging or stabilizing elements.

\begin{figure}[htbp] % float placement: (h)ere, page (t)op, page (b)ottom, other (p)age
  \centering
  % file name: C:/Documents and Settings/Scott/Desktop/MyFiles/CBMS/CBMSbook/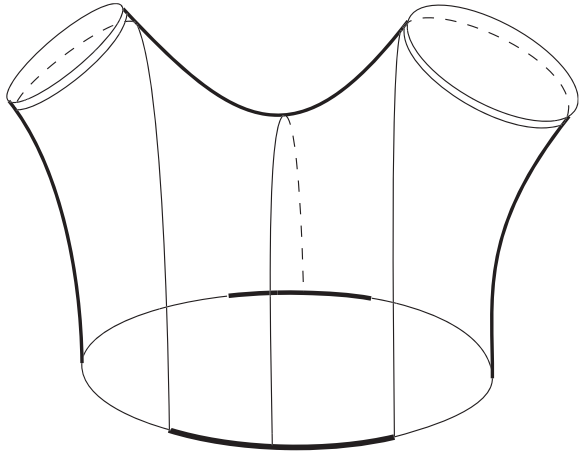
  \includegraphics[bb=0 0 507 397,width=3in,height=2.35in,keepaspectratio]{maingap}
  \caption{A pair of pants with equators, main gaps, two spirals and an orthoboundary geodesic $\gamma$.}
  \label{fig:maingap}
\end{figure}

In the above, associated to a main gap are the ortho emanating geodesics that self intersect in the pants, and the simple geodesics that obliquely cross $\beta$ a second time - refer to the second type geodesics as {\em boundary oblique}.   The next observation is that the associations can be reversed, the associations define bijections between main gaps and  geodesics with particular behaviors on the surface.  

A boundary oblique geodesic and $\beta$ form a crooked wire frame that determines a pair of pants, similar to an ortho boundary geodesic determining a pair of pants.  Boundary oblique geodesics come in continuous families with each family limiting to an ortho boundary geodesic.  A family and its limit determine the same pair of pants.  The initial points (the $\beta$ orthogonal points) of family elements lie in a common main gap interval - this observation reverses the association of segments of main gaps to boundary oblique geodesics.  

Next we describe reversing the association of segments of main gaps to non simple ortho emanating geodesics.  The first self intersection of such a geodesic is contained in a unique embedded pair of pants.  To see this, consider the lasso subarc beginning at $\beta$ and ending where the geodesic passes through its first self intersection point a second time.  The boundary of a small neighborhood/thickening of the lasso is the union of a simple closed curve and an element of $\caB$.  A geometric argument shows that the self intersection point is contained in the pants determined by the element of $\caB$ and the lasso initial point lies in the main gap for the pants.  This observation reverses the association of segments of main gaps to non simple ortho emanating geodesics.

\begin{figure}[htbp] % float placement: (h)ere, page (t)op, page (b)ottom, other (p)age
  \centering
  % file name: C:/Documents and Settings/Scott/Desktop/MyFiles/CBMS/CBMSbook/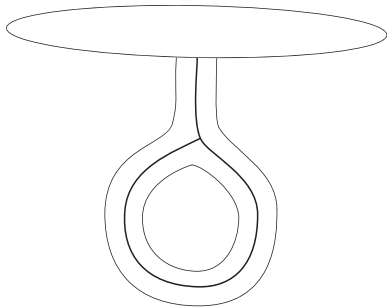
  \includegraphics[bb=0 0 507 401,width=2in,height=1.58in,keepaspectratio]{lasso}
  \caption{A small neighborhood of a lasso.}
  \label{fig:lasso}
\end{figure}

We recall that non simple with interior intersection is an open condition on the space of geodesics and an open condition on the space of ortho emanating geodesics.  By considering the double of the surface, simple with all boundary intersections orthogonal is a closed condition on the space of geodesics.  The set {\em simple with orthogonal single boundary intersection} is a Cantor set.  The classification of ortho emanating geodesics is complete.

\begin{thrm*}\textup{(Tan-Wong-Zhang \cite{TanWZ}, and Mirzakhani \cite{Mirvol}, all following McShane \cite{McSh}.)} There is a Cantor set partition of boundary points by the behavior of ortho emanating geodesics:
\begin{multline*}
\beta\,=\,\{\mbox{simple with orthogonal single boundary intersection}\}\\ \,
\cup\,\{\mbox{ortho boundary geodesics}\}\,\cup\,\{\mbox{simple boundary oblique geodesics}\}\,\cup\,\{\mbox{non simple}\}.
\end{multline*}
\end{thrm*}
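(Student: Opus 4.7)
The plan is to prove the classification by exhibiting the four sets as disjoint, showing their union covers $\beta$, and deducing the Cantor set structure of the first set from the fact that the others arise as (labeled) open intervals of $\beta$.

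First I would localize the problem using the $\beta$-cuff pants analysis already set up. For each homotopy class $[\gamma]\in\caB$ with endpoints on $\beta$, the preceding discussion produces a unique embedded $\beta$-cuff pair of pants $\caP$, and within $\caP$ one identifies the two main gaps: open subarcs of $\beta$ bounded by the four spiral initial points and each containing exactly one endpoint of the ortho boundary geodesic $\gamma$. The internal geometry of a hyperbolic pair of pants (which can be made fully explicit by cutting along the three orthogonal seams between boundaries) forces every ortho emanating geodesic whose initial point lies in a main gap to be exactly one of: the ortho boundary geodesic $\gamma$ itself (one point), a simple boundary oblique geodesic (two open subarcs adjacent to $\gamma$, swept out by one-parameter families limiting to $\gamma$), or a geodesic that self-intersects already inside $\caP$ (hence globally non simple). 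This establishes the forward direction of the classification within a single pants.

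Next I would reverse these associations to show that the three ``non-orthogonal'' types of ortho emanating geodesics are accounted for by main gaps, each geodesic by a unique gap. For ortho boundary geodesics the match is tautological. For a simple boundary oblique geodesic $\delta$, the wire frame $\delta\cup\beta$ thickens to a unique $\beta$-cuff pair of pants, and the continuous family joining $\delta$ to its ortho boundary limit shows the initial point of $\delta$ lies in the corresponding main gap. For a non-simple ortho emanating geodesic $\eta$, one passes to the lasso cut off at the first self-intersection, thickens to a unique embedded $\beta$-cuff pants $\caP_\eta$, and verifies that the initial point of $\eta$ sits in the main gap of $\caP_\eta$; the uniqueness uses that the first self-intersection point is preserved under the construction. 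Summing over $[\gamma]\in\caB$ produces a disjoint decomposition of $\beta$ into the three named sets together with the complementary closed set.

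Finally I would check that the complementary set, namely $\{$simple with orthogonal single boundary intersection$\}$, is a Cantor set. Closedness is the doubling argument indicated in the text: under the doubled hyperbolic structure, this set corresponds to endpoints of simple closed geodesics of the double crossing the doubling locus orthogonally exactly once, a closed condition on the compact space of geodesics. Total disconnectedness follows from density of the main gaps, which I would obtain by noting that $\caB$ is countable and that the endpoints of main gaps (spiral initial points) are dense on $\beta$; concretely, given any two points $p,q$ in the simple set, a small transverse perturbation yields an ortho emanating geodesic with an interior self-intersection, and this lies in some main gap separating $p$ from $q$. Perfectness follows in the same spirit: an isolated point of the simple set would be surrounded on both sides by a single main gap, contradicting that nearby main gaps arise from different pants $[\gamma]\in\caB$ accumulating at the point.

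The main obstacle is the density/perfectness step: one must argue that infinitely many distinct $\beta$-cuff pants contribute main gaps accumulating on any given simple ortho emanating geodesic. I would handle this by approximating such a geodesic by its long initial segments inside successive pants decompositions (or by applying Birman--Series to rule out positive measure and combining with the already established closedness), which together with countability of $\caB$ forces the required density.
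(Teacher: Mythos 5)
Your proposal is correct and takes essentially the same route as the paper: classification of ortho emanating geodesics by their initial segments in the $\beta$-cuff pants, the main gaps, the reverse associations via crooked wire frames for boundary oblique geodesics and the lasso/thickening argument for non simple ones, and the doubling/open--closed conditions for the residual set. The only difference is that the paper simply asserts the Cantor property at that point (keeping Birman--Series separate, for the measure-zero statement), whereas you spell out and honestly flag the density/perfectness step, which you may legitimately settle by combining closedness with Birman--Series as you suggest.
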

     
An $\epsilon$-neighborhood of the simple complete geodesics, orthogonal to the boundary at intersections, is a countable union of thin corridors.  In the universal cover the corridors are described by reduced bi infinite words in the fundamental group.  By analyzing the number and width of corridors, Birman-Series show that the set is very thin.

\begin{thrm*}\textup{(Birman-Series, \cite{BS}.)\ Simple geodesics have measure zero.} The set $\caS$ of simple complete geodesics, orthogonal to the boundary at intersections, has Hausdorff dimension $1$.  The intersection of $\caS$ and the boundary has Hausdorff dimension and measure $0$.
\end{thrm*}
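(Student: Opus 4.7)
The plan is to pass to the universal cover, convert the statement into a combinatorial counting problem over words in $\pi_1(F)$, and then deduce the dimension and measure estimates from the fact that the relevant word count grows only polynomially rather than exponentially in length.

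First, I would fix a hyperbolic metric on the doubled surface so that every geodesic orthogonal to $\beta$ at a point of intersection doubles to a complete geodesic, and pick a Dirichlet fundamental domain $D\subset\mathbb H^2$. Each complete geodesic in $\caS$ that meets $\beta$ orthogonally lifts to a bi-infinite geodesic in $\mathbb H^2$, and its sequence of crossings of translates $\gamma D$ encodes it as a bi-infinite reduced word in the generators of $\pi_1(F)$. The simplicity condition, together with the orthogonality at every boundary intersection, translates into a \emph{crossing-free} condition on these coding words: two distinct lifts of geodesics in $\caS$ never cross inside any translate of $D$ in incompatible patterns.

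Next comes the key combinatorial step, which I expect to be the main obstacle. Let $N(L)$ denote the number of distinct finite subwords of length $\le L$ (in the word-metric sense, equivalent up to a multiplicative constant to hyperbolic arc length via the \v{S}varc-Milnor lemma) that can appear as an initial segment of some element of $\caS$. The Birman-Series observation is that the no-crossing requirement forces such a word, once it has entered a fundamental-domain translate through a given side, to exit through a side determined by the previous history modulo a bounded number of choices; this is in sharp contrast to the exponential branching $c^L$ available to arbitrary geodesic words. A careful bookkeeping of the allowed transitions yields a polynomial bound $N(L)\le CL^{k}$ for some constants $C,k$ depending only on the topology. Establishing this bound rigorously is the technical heart of the argument and is what requires genuine work, because it demands showing that many combinatorially a priori possible prefixes are in fact incompatible with extending to a complete simple geodesic.

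With the polynomial bound in hand, the dimension and measure statements follow by standard covering estimates. For each $L$, the set of simple complete geodesic arcs of length $\le L$ meeting $\beta$ orthogonally is covered in the surface by at most $N(L)$ thin corridors, each of hyperbolic length $\le L$ and any chosen width $\epsilon>0$; their total area is $O(\epsilon L\cdot L^{k})$. For fixed $L$ and $\epsilon\to 0$ this gives a covering of $\caS\cap\{\text{length}\le L\}$ by rectangles of side lengths $\le L$ and $\le \epsilon$, from which the Hausdorff $s$-content of $\caS$ for any $s>1$ is estimated by $N(L)L\epsilon^{s-1}\to 0$ as $\epsilon\to 0$; combined with the obvious lower bound (each corridor contains a whole geodesic arc) this pins the Hausdorff dimension of $\caS$ to exactly $1$. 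Restricting the same estimate to the boundary $\beta$, the set $\caS\cap\beta$ is covered by $N(L)$ intervals of length $\le\epsilon$; taking $\epsilon\to 0$ first shows the one-dimensional Lebesgue measure vanishes, and taking $\epsilon\le L^{-(k+2)}$ and letting $L\to\infty$ shows that the Hausdorff $s$-content of $\caS\cap\beta$ vanishes for every $s>0$, so the Hausdorff dimension of $\caS\cap\beta$ is $0$. Finally, to pass from a compact piece of $\caS$ to the full set, one uses that the orthogonality and simplicity conditions are invariant under truncation at the boundary, so the estimates are uniform in $L$, which completes the argument.
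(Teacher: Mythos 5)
Your outline---code simple geodesics by their crossing sequences of a fundamental domain, count the admissible codes, and finish with a covering estimate---is indeed the Birman-Series strategy, and it is all the paper itself records (the theorem is cited to \cite{BS}, with a two-sentence sketch about corridors described by reduced bi-infinite words). However, as written your argument has a genuine gap in the covering step. You cover $\caS$ (respectively $\caS\cap\beta$) by $N(L)$ corridors (intervals) of ``any chosen width $\epsilon>0$'' and let $\epsilon\to 0$ with $L$ fixed. That begs the question: $\caS\cap\beta$ is an uncountable compact Cantor-type set, and the assertion that it is covered by a fixed finite number $N(L)$ of intervals of arbitrarily small total length is precisely the measure-zero conclusion you are trying to prove. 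The width cannot be a free parameter; it must be supplied by the geometric ingredient that ties thinness to the coding length, namely that two geodesics (or two boundary footpoints of orthogonally emanating geodesics) whose codings agree for combinatorial length $L$ are within $Ce^{-cL}$ of one another. With that contraction estimate, $\caS\cap\beta$ is covered by $N(L)$ intervals of length $Ce^{-cL}$ and $\caS$ by $N(L)$ corridors of width $Ce^{-cL}$, and letting $L\to\infty$ the polynomial-times-exponential products give measure zero and dimension $0$ on the boundary, and (subdividing each corridor into roughly $Le^{cL}$ squares of side $e^{-cL}$) dimension at most $1$ for $\caS$, with the lower bound trivial as you note.

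The second weak point is the polynomial bound $N(L)\le CL^{k}$, which you correctly identify as the heart but leave unproved; moreover the mechanism you suggest---that each entry into a translate of the fundamental domain determines the exit ``modulo a bounded number of choices''---would only yield an exponential bound $b^{L}$ whenever that bounded number is at least $2$. The actual Birman-Series count is global rather than step-by-step: a simple arc or arc system on the compact surface is determined up to isotopy by its intersection numbers with a fixed finite family of reference arcs, each such number is $O(L)$ for an arc crossing $L$ translates, so there are only $O(L^{k})$ isotopy classes, and each class contributes a bounded number of coding words. Without this (or an equivalent) argument, and without the $\epsilon$--$L$ coupling above, neither half of the statement is established.
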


The basic summand for the length identity is a rational exponential function. Define the function $H$ on $\mathbb R^2$ by
\begin{equation}\label{hdef}
H(x,y)=\frac{1}{1+e^{\frac{x+y}{2}}}+\frac{1}{1+e^{\frac{x-y}{2}}}
\end{equation}    
and the corresponding functions $\caD,\caR$ on $\mathbb R^3$ by
\begin{equation}
\begin{split}
\caD(x,y,z)\,&=\,2\log\Bigg(\frac{e^{\frac{x}{2}}\,+\,e^{\frac{y+z}{2}}}{e^{\frac{-x}{2}}\,+\,e^{\frac{y+z}{2}}}\Bigg)  \quad\mbox{and}\\
\caR(x,y,z)\,&=\,x\,-\,\log\Bigg(\frac{\cosh\frac{y}{2}\,+\cosh\frac{x+z}{2}}{\cosh\frac{y}{2}\,+\cosh\frac{x-z}{2}} \Bigg).
\end{split}
\end{equation}
The functions $\caD$ and $\caR$ are related to $H$ as follows,
\begin{equation}\label{drdef}
\begin{split}
\frac{\partial\ }{\partial x}\caD(x,y,z)&=H(y+z,x),\quad \caD(0,0,0)=0\quad \mbox{and}\\
 2\,\frac{\partial\ }{\partial x}\caR(x,y,z)&=H(z,x+y)+H(z,x-y),\quad \caR(0,0,0)=0.
\end{split}
\end{equation}

\begin{thrm*} \textup{\cite[Theorem 1.3 \& 4.2]{Mirvol} and \cite[Thrm. 1.8]{TanWZ}.\label{lenid} The Mirzakhani-McShane identity.} For a hyperbolic surface $R$ with boundaries $\beta_j$ with lengths $L_j$,
\[
L_1\,=\,\sum_{\alpha_1,\alpha_2}\,\caD(L_1,\ell_{\alpha_1}(R),\ell_{\alpha_2}(R))\,+\,
\sum_{j=2}^n\sum_{\alpha}\,\caR(L_1,L_j,\ell_{\alpha}(R)),
\]
where the first sum is over all unordered pairs of simple closed geodesics with $\beta_1,\alpha_1,\alpha_2$ bounding an embedded pair of pants, and the double sum is over simple closed geodesics with $\beta_1,\beta_j,\alpha$ bounding an embedded pair of pants.
\end{thrm*}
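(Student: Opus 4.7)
The plan is to use the Cantor set decomposition of $\beta_1$ described in the preceding classification, combined with the Birman--Series measure zero theorem, to express $L_1$ as a sum of main gap lengths, and then evaluate the total main gap length of each embedded pair of pants in closed form by hyperbolic trigonometry on the pants.

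First I would apply Birman--Series: the set of points on $\beta_1$ from which simple ortho emanating geodesics start is a Cantor set of measure zero. Together with the classification theorem, this means that (up to a null set) $\beta_1$ is a disjoint union of open intervals, each of which is a main gap for some embedded pair of pants $\caP$ having $\beta_1$ as a boundary. By countable additivity of Lebesgue measure,
\[
L_1 \;=\; \sum_{\caP}\,\mathrm{length}\bigl(\mathrm{main\ gaps\ of\ }\caP\bigr),
\]
the sum running over embedded $\beta_1$-cuff pants. I would then split this sum according to the topological type of $\caP$: the other two cuffs of $\caP$ are either both interior simple closed geodesics $\alpha_1,\alpha_2$ (producing the $\caD$ term, indexed by unordered pairs), or one is a boundary $\beta_j$ of $R$ with $j\ge 2$ and the other is an interior simple closed geodesic $\alpha$ (producing the $\caR$ term). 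Elements of $\caB$ with both endpoints on $\beta_1$ correspond to the first case; elements connecting $\beta_1$ to some $\beta_j$ correspond to the second.

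The computational core is the per-pants calculation. The plan is to lift $\caP$ to the hyperbolic plane and use the standard right-angled hexagon relations for a pair of pants with cuff lengths $(L_1,x,y)$ to locate the four spiral initial points on $\beta_1$ as feet of perpendiculars from the lifted axes of the $\alpha_i$ (or $\beta_j,\alpha$). A direct length computation on $\beta_1$, together with the definitions of $\caD$ and $\caR$, should match the two main gap lengths with $\caD(L_1,\ell_{\alpha_1},\ell_{\alpha_2})$ and with $\caR(L_1,L_j,\ell_\alpha)$ respectively. The quickest route to the identification is to differentiate in $L_1$: using the relations
\[
\frac{\partial\ }{\partial L_1}\caD(L_1,y,z)=H(y+z,L_1),\qquad 2\,\frac{\partial\ }{\partial L_1}\caR(L_1,L_j,z)=H(z,L_1+L_j)+H(z,L_1-L_j),
\]
one reduces the verification to showing that the infinitesimal rate at which the main gap opens as $L_1$ grows is exactly the stated rational exponential density, which is a one-variable hyperbolic computation with clean closed-form answer.

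The main obstacle will be the bookkeeping in the per-pants calculation: there are two main gaps per pants related by the equatorial involution, the degenerate case $(g,n)=(1,1)$ with $\alpha_1=\alpha_2$ must be handled without double counting, the sum over $\{\alpha_1,\alpha_2\}$ is unordered while the gaps are labeled, and one must verify that the assignment pants $\mapsto$ unordered triple of cuffs is injective so that no term in the stated sum is produced by two distinct embedded pants. Once the per-pants formula and the combinatorics of this indexing are verified, absolute convergence of the sum follows from the Birman--Series Hausdorff dimension estimate, and summing the per-pants contributions yields the identity.
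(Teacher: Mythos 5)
Your overall route --- the classification of ortho emanating geodesics plus Birman--Series to decompose $\beta_1$ up to a null set, followed by a per-pants hyperbolic trigonometry computation --- is the same as the paper's, but there is a concrete bookkeeping error that makes the plan fail as written: $\beta_1$ is \emph{not} (mod measure zero) a union of main gaps. Points of $\beta_1$ whose ortho emanating geodesic stays simple and leaves $R$ through a second surface boundary $\beta_j$ (obliquely, or along the ortho boundary geodesic from $\beta_1$ to $\beta_j$) lie in no main gap; inside the unique embedded pants $\caP$ spanned by $\beta_1$, $\beta_j$ and the connecting arc, they fill out the entire complementary interval adjacent to the cuff $\beta_j$. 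Moreover, the main gap length is an intrinsic function of the three cuff lengths of $\caP$, so your per-pants computation must return the same value for every $\beta_1$-cuff pants regardless of whether a cuff happens to be a boundary of $R$; that value is $\caD(L_1,x,y)$, and your expectation of matching the main gaps of a double-boundary pants with $\caR(L_1,L_j,\ell_\alpha)$ cannot be realized. Summing only main gaps would give $\sum\caD$ over all pants and undercount $L_1$ by exactly the measure of the set of initial points whose geodesics exit through the other boundaries.

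The repair is the accounting the paper makes explicit: for a pants containing a second surface boundary the contribution is the main gaps \emph{plus} the complementary interval adjacent to $\beta_j$. Computing the half-interval cut off by the ortho boundary geodesic $\delta_\alpha$ via the quadrilateral in the universal cover with angles $\pi/2,\pi/2,\pi/2,0$ gives $\tanh\ell_{\beta_\alpha}=\operatorname{sech}\delta_\alpha$, hence a complementary interval of length $L_1-\caR(L_1,y,z)$ (with $z$ the adjacent cuff length, $y$ the other); the relation $\caR(x,y,z)+\caR(x,z,y)=x+\caD(x,y,z)$ then yields both that the two main gaps total $\caD(L_1,y,z)$ and that main gaps plus the $\beta_j$-adjacent interval total $\caR(L_1,L_j,\ell_\alpha)$, which is exactly the asymmetry between the $\caD$ and $\caR$ terms in the identity. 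Two smaller caveats: differentiating in $L_1$ determines the per-pants length only up to a function of the other two cuff lengths, so the normalizations $\caD(0,0,0)=\caR(0,0,0)=0$ alone do not pin it down --- you need either the explicit trirectangle formula or a degeneration limit valid for all $(y,z)$; and your indexing concerns (unordered pairs, the $(1,1)$ case $\alpha_1=\alpha_2$, injectivity of pants $\mapsto$ cuffs) are real but are settled by the bijection between $\caB$ and embedded $\beta_1$-cuff pants already established in the classification.
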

\begin{proof}
By the above Theorems, $\ell_{\beta}$ equals the sum over embedded $\beta$-cuff pants of main gap lengths and the counterpart lengths for double boundary cuff pants.  To find the main gap lengths, begin with a formula for the lengths of the complementary intervals.   In a pair of pants, the ortho boundary geodesic $\delta_{\alpha}$ from $\beta$ to $\alpha$ bisects a complementary interval (see Figure \ref{fig:maingap}).  Let $\beta_{\alpha}$ be one of the resulting half intervals.  The segment $\beta_{\alpha}$ has the geodesic $\delta_{\beta}$ emanating at one end and a spiral $\sigma$ to $\alpha$ emanating at the other end.  In the universal cover, consider contiguous lifts $\widetilde{\delta_{\alpha}}$, $\widetilde{\beta_{\alpha}}$ and $\widetilde{\sigma}$.  The three lifts and a half infinite ray lift $\widetilde{\alpha}$ of $\alpha$, combine to form a quadrilateral 
$\widetilde{\alpha},\widetilde{\delta_{\alpha}},\widetilde{\beta_{\alpha}},\widetilde{\sigma}$ 
with angles $\pi/2,\pi/2,\pi/2$ and $0$ between $\widetilde{\sigma}$ and $\widetilde{\alpha}$.  By hyperbolic trigonometry of quadrilaterals \cite{Busbook}, it follows that
\[
\tanh \ell_{\beta_{\alpha}}\,=\,\operatorname{sech} \delta_{\alpha}\,=\,\frac{\sinh (\ell_{\beta}/2)\,\sinh (\ell_{\alpha}/2)}{\cosh (\ell_{\lambda}/2)\,+\,\cosh(\ell_{\beta}/2)\,
\cosh(\ell_{\alpha}/2)}\,.
\]
The complementary interval length is $2\ell_{\beta_{\alpha}}=\ell_{\beta}-\caR(\ell_{\beta},\ell_{\lambda},\ell_{\alpha})$.  The formula for main gap lengths now follows from the general relation $\caR(x,y,z)\,+\,\caR(x,z,y)=x\,+\,\caD(x,y,z)$.   For double boundary cuff pants, the main gap lengths are added to the complementary interval length.  The result is 
$\caR(\ell_{\beta},\ell_{\alpha},\ell_{\lambda})$.  
\end{proof}

\hspace{1in}

\noindent\hypertarget{lec3}{{\bfseries Lecture 3: The covolume formula and recursion.}}

The main step is application of the length identity to reduce the action of the mapping class group to an action of smaller mapping class groups, and consequently express the volume as an integral over a length level set.  The result is an integral of products of lower dimensional volume functions - the recursion. 

The approach is illustrated by computing the genus one, one boundary, volume.  The length identity is 
\[
L\,=\,\sum_{\alpha\ \tiny{simple}}\caD(L,\lla,\lla).
\]
Introduce $\stab(\alpha)\subset\mcg$, the stabilizer for $\mcg$ acting on free homotopy classes.  A torus is an elliptic curve with universal cover $\mathbb C$ with involution $z\rightarrow -z$ stabilizing the deck transformation lattice. The involution acts on tori and tori with one puncture or boundary. The involution reverses orientation for the free homotopy class of each simple closed geodesic and the stabilizer $\stab(\alpha)$ is the semi direct product of the Dehn twists by the involution $\mathbb Z/2\mathbb Z$ subgroup.  The involution acts trivially on Teichm\"{u}ller space.  (The torus is one of the exceptional cases where the $\mcg$ action on $\caT$ is not effective.  We will also discuss the torus case below, where a multiplicity is involved.)  A Dehn twist acts on the Teichm\"{u}ller space in FN coordinates by $(\ell,\tau)\rightarrow(\ell,\tau+\ell)$.  The sector $\{0\le\tau<\ell\}$ is a fundamental domain for the  $\stab(\alpha)$ action.  A mapping class $h\in\mcg$ acts on a geodesic-length function by $\lla\circ h^{-1}=\ell_{h(\alpha)}$. 

Write the length identity as
\[
L\,=\,\sum_{\alpha}\,\caD(L,\lla,\lla)\,=\,\sum_{h\in\mcg/\stab(\alpha)}\caD(L,\ell_{h(\alpha)},\ell_{h(\alpha)}),
\]
use the $\mcg$ action on geodesic-length functions, to find
\[
LV(L)\,=\,\int_{\caT(L)/\mcg}\sum_{\mcg/\stab(\alpha)}\caD(L,\lla\circ h^{-1},\lla\circ h^{-1})\,\omega,
\]
change variables on $\caT$ by $p=h(q)$ to find
\[
\sum_{h\in\mcg/\stab(\alpha)}\int_{h(\caT(L)/\mcg)}\caD(L,\lla,\lla)\,d\tau d\ell\,=\,\int_{\caT(L)/\stab(\alpha)}\caD(L,\lla,\lla)\,d\tau d\ell,
\]
and use the $\stab(\alpha)$ fundamental domain, to obtain the integral
\[
\int_0^{\infty}\int_0^{\ell}\,\caD(L,\ell,\ell)\,d\tau d \ell.
\]
The integral in $\tau$ gives a factor of $\ell$.

The derivatives $\partial\caD(x,y,z)/\partial x$ and $\partial\caR(x,y,z)/\partial x$ are simpler than the original functions $\caD$ and $\caR$ -  apply this observation and differentiate in $L$ to obtain a formula for the derivative of $LV(L)$,
\[
\frac{\partial}{\partial L}LV(L)\,=\,\int_0^{\infty}\frac{1}{1+e^{\ell+\frac{L}{2}}}\,+\,\frac{1}{1+e^{\ell-\frac{L}{2}}}\,\ell d \ell\,=\,\frac{\pi^2}{6}\,+\,\frac{L^2}{8}.
\]
The formula $V(L)=\frac{\pi^2}{6}+\frac{L^2}{24}$ results. 

We prepare for the general case.  In algebraic geometry intersection theory, the elliptic involution gives rise to multiplying elliptic intersection counts by a factor of $1/2$. The factor corresponds to the generic fiber of the universal elliptic curve being the quotient of the elliptic curve by its involution.  Along this line, the general volume recursion is simplified if $V_{1,1}(L)$ is formally defined to be $1/2$ of the given value $V(L)$.  In mapping class group theory, the elliptic involution appears as the {\em half Dehn twist} for simple closed curves bounding a torus.  In particular, consider the fundamental group $\pi_1(R)$ of a surface, with the standard presentation $a_1b_1a_1^{-1}b_1^{-1}\cdots a_gb_ga_g^{-1}b_g^{-1}c_1\cdots c_n=1$, with $c_j$ a loop about the $j^{th}$ boundary.  The half Dehn twist about the curve $a_1b_1a_1^{-1}b_1^{-1}$ is the automorphism of $\pi_1(R)$ given by: $a_1\rightarrow b_1a_1^{-1}b_1^{-1}, b_1\rightarrow b_1^{-1}; a_j\rightarrow a_1^{-1}a_ja_1, b_j\rightarrow a_1^{-1}b_ja_1,\,\mbox{for }j=2\dots g, \mbox{ and } c_j\rightarrow a_1^{-1}c_ja_1,\,\mbox{for }j=1\dots n$.  The square of a half Dehn twist is a Dehn twist and a half Dehn twist acts on the associated FN parameters by $(\ell,\tau)\rightarrow(\ell,\tau+\ell/2)$.  

We now set up for the covolume formula.  Let $R$ be a hyperbolic surface with geodesic boundaries 
$\beta_1,\,\dots,\beta_n$.  Consider a weighted multicurve
\[
\gamma\,=\,\sum_{j=1}^m a_j\gamma_j,
\]
where $a_j$ are real weights and $\gamma_j$ are distinct, disjoint, simple closed geodesics.  Define $\stab(\gamma)\subset\mcg$ to be the mapping classes stabilizing the collection of unlabeled, weighted geodesics - elements of $\stab(\gamma)$ may permute components of the multicurve with equal weights. Write $\stab(\gamma_j)$ for the stabilizer of an individual geodesic and $\stab_0(\gamma_j)$ for the subgroup of elements preserving orientation.  

Write $R(\gamma)$ for the surface cut open along the  $\gamma$ - each $\gamma_j$ gives rise to two new boundaries - $R(\gamma)$ may be disconnected.  Write $\caT(R(\gamma);\mathbf x),\,\mathbf x=(x_1,\dots,x_m)$ for the (product) Teichm\"{u}ller space of the cut open surface with the pair of boundaries for $\gamma_j$ having length $x_j$. Denote by $\mcg(R(\gamma))$ the product of mapping class groups of the components of $R(\gamma)$ and by $\caT(R(\gamma);\mathbf x)/\mcg(R(\gamma))$ the corresponding product of moduli spaces. For the product of symplectic forms on $\caT(R(\gamma);\mathbf x)$ corresponding to the components of $R(\gamma)$, the volume $V(R(\gamma);\mathbf x)$ is the product of volumes of the component moduli spaces, where again the pair of boundaries for $\gamma_j$ have common length $x_j$.  Considerations  also involve the finite symmetry group
\[
\sym(\gamma)\,=\,\stab(\gamma)/\cap_j\stab_0(\gamma_j)
\]
of mapping classes that possibly permute and reverse orientation of the $\gamma$ elements.    

Summing the translations of a function over a group gives a group action invariant function.  Begin with a function $f$, suitably small at infinity, and introduce the $\mcg$ sum
\begin{equation}
f_{\gamma}(R)=\sum_{\mcg/\stab(\gamma)}f\big(\sum_{j=1}^ma_j\ell_{h(\gamma_j)}(R)\big).
\end{equation}
The next theorem expresses the moduli space integral 
\[
\int_{\caM(R)}\,f_{\gamma}\,dV
\]
as a weighted integral of lower dimensional moduli space volumes.

\begin{thrm*}\cite[Thrm. 7.1]{Mirvol} \textup{The covolume formula.}\label{covol} For a weighted  $\gamma=\sum_{j=1}^ma_j\gamma_j$ and the $\mcg$ sum of a function $f$, small at infinity, then
\[
\int_{\caT(R)/\mcg}f_{\gamma}\,dV=(|\mbox{Sym}(\gamma)|)^{-1}\int_{\mathbb R_{>0}^m}f(|\mathbf x|)V(R(\gamma);\mathbf x)\,\mathbf x\cdot d\mathbf x
\]
where $|\mathbf x|=\sum_ja_jx_j$ and $\mathbf x\cdot d\mathbf x=x_1\cdots x_mdx_1\cdots dx_m$.
\end{thrm*}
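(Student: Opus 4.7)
The plan is to unfold the $\mcg$-sum against a fundamental domain, pass to the intermediate cover $\caM^{\gamma}$ in which the curves $\gamma_j$ are marked, and then apply the $d\ell\wedge d\tau$ formula so that the twist coordinates integrate out trivially, leaving a length integral against the volume of the cut surface. This is exactly the toy computation done for $V_{1,1}$, promoted to the multi-curve setting.

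First I would unfold: since $f_{\gamma}(R) = \sum_{h \in \mcg/\stab(\gamma)} f\bigl(\sum_j a_j \ell_{h(\gamma_j)}(R)\bigr)$ and $\ell_{h(\gamma_j)} = \ell_{\gamma_j}\circ h^{-1}$, the change of variables $R \mapsto h^{-1}R$ together with $\mcg$-invariance of $\omega$ collapses the sum to the single integrand $f(\sum_j a_j \ell_{\gamma_j})$ over $\caT(R)/\stab(\gamma)$ (the "small at infinity" hypothesis is used here to justify interchanging sum and integral). Then I would factor the quotient through the intermediate group $\bigcap_j \stab_0(\gamma_j) \triangleleft \stab(\gamma)$, whose quotient is $\sym(\gamma)$; as the remaining integrand $f(\sum_j a_j \ell_{\gamma_j})$ depends only on the lengths $\ell_{\gamma_j}$ and is $\sym(\gamma)$-invariant when the cut-surface volume is included, the final quotient contributes the factor $|\sym(\gamma)|^{-1}$.

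Second I would extend $\gamma_1,\dots,\gamma_m$ to a Fenchel-Nielsen pants decomposition, writing the coordinates as $(\ell_j,\tau_j)_{j=1}^m$ for the $\gamma_j$'s together with Fenchel-Nielsen coordinates $\mathbf{u}$ on $\caT(R(\gamma);\mathbf{x})$. By the $d\ell\wedge d\tau$ formula the top-power symplectic volume splits as
\[
dV \;=\; \Bigl(\bigwedge_{j=1}^m d\ell_j \wedge d\tau_j\Bigr) \wedge dV_{R(\gamma)},
\]
where $dV_{R(\gamma)}$ is the WP volume form on $\caT(R(\gamma);\mathbf{x})$. The group $\bigcap_j \stab_0(\gamma_j)$ contains the Dehn twists along each $\gamma_j$ (acting by $\tau_j \mapsto \tau_j + \ell_j$) together with $\mcg(R(\gamma))$ acting on $\mathbf{u}$, and these actions are independent. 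Consequently $\caT/\bigcap_j\stab_0(\gamma_j)$ fibers over $\mathbb{R}_{>0}^m$ via $\mathbf{x} = (\ell_{\gamma_j})$, with fiber the product of the circles of circumference $x_j$ (from the $\tau_j$ coordinates) and the moduli space $\caT(R(\gamma);\mathbf{x})/\mcg(R(\gamma))$ of total WP volume $V(R(\gamma);\mathbf{x})$.

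Third, I would carry out the integration: the $\mathbf{u}$-integral produces $V(R(\gamma);\mathbf{x})$, each $\tau_j$-integral over $[0,x_j]$ produces a factor of $x_j$, leaving
\[
\int_{\caT/\cap_j\stab_0(\gamma_j)} f(|\mathbf{x}|)\, dV \;=\; \int_{\mathbb{R}_{>0}^m} f(|\mathbf{x}|)\, V(R(\gamma);\mathbf{x})\, x_1\cdots x_m\, dx_1\cdots dx_m,
\]
and dividing by $|\sym(\gamma)|$ yields the desired identity. The main obstacle is the bookkeeping around $\sym(\gamma)$: one has to verify that the product volume $V(R(\gamma);\mathbf{x})$ is genuinely $\sym(\gamma)$-invariant (invariance under permutations of boundary components with equal weights and under boundary orientation reversals, both of which only relabel boundaries of the cut surface without changing hyperbolic volume), so that the further quotient by $\sym(\gamma)$ produces the single clean factor $|\sym(\gamma)|^{-1}$ rather than a more complicated combinatorial correction. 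Once this invariance is in hand, every other ingredient reduces to Fubini and the $d\ell\wedge d\tau$ formula.
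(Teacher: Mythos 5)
Your overall route is the same as the paper's: unfold the $\mcg$-sum, pass to the quotient by $\bigcap_j\stab_0(\gamma_j)$ (you extract the factor $|\sym(\gamma)|^{-1}$ by a covering-degree argument, whereas the paper gets it from the coset identity $\sum_{\mcg/\cap_j\stab_0(\gamma_j)}f=|\sym(\gamma)|\,f_{\gamma}$ — a harmless difference), and then use the Fenchel--Nielsen fibration and the $d\ell\wedge d\tau$ formula to integrate out the twist variables and produce $V(R(\gamma);\mathbf x)$ and the factors $x_j$.

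There is, however, a genuine gap in your second step. The claim that $\bigcap_j\stab_0(\gamma_j)$ acts on the coordinates as the product of the full Dehn twists $\tau_j\mapsto\tau_j+\ell_j$ and of $\mcg(R(\gamma))$ on $\mathbf u$, ``independently,'' fails precisely when some $\gamma_j$ bounds a torus with a single boundary. In that case the half Dehn twist about $\gamma_j$ lies in $\stab_0(\gamma_j)$: it acts on the twist coordinate by $\tau_j\mapsto\tau_j+\ell_j/2$, while on the one-holed-torus component it induces the elliptic involution, which acts trivially on that component's Teichm\"{u}ller space. So the effective twist fiber is a circle of circumference $x_j/2$, not $x_j$, and your computation is off by a factor of $2$ for each such curve. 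The paper handles this explicitly: the short exact sequence places half Dehn twists in the mapping class groups of the one-holed tori, the group $\mbox{Dehn}_*(\gamma_j)$ is generated by a half twist when $\gamma_j$ bounds a torus with a single boundary (fundamental domain $0<\tau_j<\ell_j/2$), and the resulting factor of $\tfrac12$ is absorbed by the convention that $V_{1,1}$ is taken to be half the Lecture 3 value, i.e.\ $V_{1,1}(L)=\pi^2/12+L^2/48$ — the convention under which the stated covolume formula and the volume recursion are correct. Without this bookkeeping your argument establishes the identity only when no $\gamma_j$ cuts off a one-holed torus. (The $\sym(\gamma)$-invariance point you flag is real but routine; the half-twist issue is the substantive subtlety.)
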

\begin{proof} Corresponding to the components $R'$ of the cut open surface $R(\gamma)$,  consider the short exact sequence for mapping class groups, 
\[
1\longrightarrow\prod_j\mbox{Dehn}(\gamma_j) \longrightarrow \bigcap_j\stab_0(\gamma_j)\longrightarrow \prod_{R(\gamma)\ components} \mcg(R')\longrightarrow 1,
\]
and the associated fibration of Teichm\"{u}ller spaces from Fenchel-Nielsen coordinates,
\begin{equation*}
\xymatrix{ \prod_{R(\gamma)\ components}\caT(R')\  \ar@{^{(}->}[r] & \caT(R) \ar[d] \\ & \prod_{\gamma_j}\mathbb R_{>0}\times \mathbb R\,.}
\end{equation*}
(The short exact sequence places half Dehn twists in the mapping class groups of the tori with single boundaries.)  The $d\ell\wedge d\tau$ formula provides that the fibration is a fibration of symplectic manifolds.  

To establish the  formula, first write for coset sums
\[
\sum\limits_{\mcg/\cap_j\stab_0(\gamma_j)}f\,=\,\sum\limits_{\mcg/\stab(\gamma)}\ \sum\limits_{\stab(\gamma)/\cap_j\stab_0(\gamma_j)}f\,=\,|\sym(\gamma)|\,f_{\gamma},
\] 
using that $f_{\gamma}$ is $\sym(\gamma)$ invariant for the second equality.  Substitute the resulting formula for $f_{\gamma}$ into the integral, and unfold the sum (express the $\mcg/\cap_j\stab_0(\gamma_j)$ translation sum as a sum of translates of a $\mcg$ fundamental domain) to obtain the equality
\[
\int_{\caT(R)/\mcg}f_{\gamma}\,dV=(|\sym(\gamma)|)^{-1}\int_{\caT(R)/\cap_j\stab_0(\gamma_j)} f\,dV.  
\]
Substitute the fibration
\begin{equation*}
\xymatrix{ \prod_{R(\gamma)\ components}\caT(R')/\mcg(R')\  \ar@{^{(}->}[r] & \caT(R)/\bigcap_j\stab_0(\gamma_j) \ar[d] \\ & \prod_{\gamma_j}(\mathbb R_{>0}\times\mathbb R)/\mbox{Dehn}_*(\gamma_j)\,,}
\end{equation*}
where $\mbox{Dehn}_*(\gamma_j)$ is generated by a half twist if the curve bounds a torus with a single boundary and otherwise is generated by a simple twist. 
Substitute the factorization of the volume element 
\[
dV=\prod_{R(\gamma)\ components}dV(R')\times\prod_{\gamma_j} d\ell_j\wedge d\tau_j.
\]
The function $f$ depends only on the values $\mathbf x$.  For the values $\mathbf x$ fixed, perform the $\prod\caT(R')/\mcg(R')$ integration to obtain the product volume $V(R(\gamma);\mathbf x)$.  Finally $\mbox{Dehn}_*(\gamma_j)$ acts only on the variable $\tau_j$ with fundamental domain $0<\tau_j<\ell_j/2$ if $\gamma_j$ bounds a torus with a single boundary or otherwise with fundamental domain $0<\tau_j<\ell_j$.  For a torus with a single boundary, the action is accounted for by using the volume value that is $1/2$ the original $V(L)$.  The right hand side of the formula is now established.  
\end{proof}

We are now ready to apply the covolume formula to the length identity.  The application follows the genus one example.  Again - it is essential to use the $\mcg$ geodesic-length function action $\lla\circ h^{-1}=\ell_{h(\alpha)}$, to consider a sum over topological configurations as a sum of $\mcg$ translates of a function. Consider the action on configurations.  The mapping class group naturally acts on $\caB$, the set of non trivial free homotopy classes of simple curves with endpoints on the boundary, rel the boundary.  Recall the correspondences: elements of $\caB$ $\Longleftrightarrow$ wire frames $\Longleftrightarrow$ boundary pants 
$\caP$. Viewing Figure \ref{fig:recursion}, the $\mcg$ orbits on $\caB$ are of three types, describing location of the boundary pants $\caP$.

\begin{itemize}
  \item Orbits for simple curves from $\beta_1$ to $\beta_1$.
  \begin{itemize}  
	\item A single orbit for $R-\caP$ connected, with $\sym(\beta_1,\alpha_1,\alpha_2)=\mathbb Z/2\mathbb Z$. 
	\item A collection of orbits for $R-\caP$ disconnected.  The orbits are classified by the joint partitions of genus $g=g_1+g_2$ and of labeled boundaries $\{\beta_2,\dots,\beta_n\}$, with each resulting component with negative Euler characteristic.  In general $\sym(\beta_1,\alpha_1,\alpha_2)=1$, except in the special case of one boundary and $g_1=g_2$. 
   \end{itemize}
   \item A collection of orbits, one for each choice of a second boundary.   In particular, an orbit for simple curves from $\beta_1$ to $\beta_j,\,j\ne 1$.  A resulting surface $R-\caP$ is connected, with $\sym(\beta_1,\beta_j,\alpha)=1$.  
\end{itemize}

\noindent Consider Theorem \ref{lenid}, and integrate each side of the identity
\[
L_1\,=\,\sum_{\alpha_1,\alpha_2}\,\caD(L_1,\ell_{\alpha_1}(R),\ell_{\alpha_2}(R))\,+\,
\sum_{j=2}^n\sum_{\alpha}\,\caR(L_1,L_j,\ell_{\alpha}(R)),
\]
over the moduli space of $R$ relative to the volume $dV$.  Form the $L_1$ partial derivative of each side to simplify the quantities $\caD$ and $\caR$. Apply formulas (\ref{drdef}) for the right hand side.  Express the right hand side as individual sums for given orbit types.  Apply Theorem \ref{covol} for each orbit type to find integrals in terms of lower dimensional moduli volumes as follows.
\begin{itemize}
  \item For the sum over simple curves from $\beta_1$ to $\beta_1$ with $R-\caP$ connected, the summand function is $\caD$ with
\[
\frac{\partial \caD}{\partial L_1}\,=\,H(\ell_{\alpha_1}+\ell_{\alpha_2},L_1),
\]
a function of length of a multicurve, and the resulting integral is
\[
\int_0^{\infty}\int_0^{\infty}H(x+y,L_1)V_{g-1}(x,y,\widehat L)\,xy\,dxdy,
\]
for $\widehat L=(L_2,\dots,L_n)$.
  \item For the sum over simple curves from $\beta_1$ to $\beta_1$ with $R-\caP$ disconnected, the summand function is $\caD$ with
\[
\frac{\partial \caD}{\partial L_1}\,=\,H(\ell_{\alpha_1}+\ell_{\alpha_2},L_1),
\]
a function of length of a multicurve, and the resulting integral is
\[
\int_0^{\infty}\int_0^{\infty}\sum_{\stackrel{g_1+g_2=g}{I_1\amalg I_2=\{2,\dots,n\}}} H(x+y,L_1)V_{g_1}(x,L_{I_1})V_{g_2}(y,L_{I_2})\,xy\,dxdy.
\]
   \item For the sum over simple curves from $\beta_1$ to $\beta_j,\,j\ne 1$, the summand function is $\caR$ with
\[
\frac{\partial\caR}{\partial L_1}\,=\,\frac12H(\lla,L_1+L_j)\,+\,\frac12H(\lla,L_1-L_j),
\]
a sum of functions of weighted length of a multicurve, and the resulting integral is
\[
\int_0^{\infty}\frac12\sum_{j=2}^n\big(H(x,L_1+L_j)+H(x,L_1-L_j)\big)V_g(x,L_2,\dots,\widehat{L_j},\dots,L_n)\,x\,dx,
\]
where $L_j$ is omitted from the argument list of $V_g$. 
\end{itemize}
Compare to the \hyperlink{volrecur}{end of Lecture 1} - the volume recursion is established.  The volume 
{\em function} $V_{g,n}(L)$ is recursively determined. 

What type of function is $V_{g,n}(L)$?  The recursion involves two elementary integrals, see formula (\ref{hdef}) above for the definition of $H$,
\[
\int_0^{\infty} x^{2j+1}H(x,t)\,dx\qquad\mbox{and}\qquad
\int_0^{\infty}\int_0^{\infty}x^{2j+1}y^{2k+1}H(x+y,t)\,dxdy.
\]
By direct calculation, see \cite[formula (6.2) and Lemma 6.2]{Mirvol}, each integral is a polynomial in $t^2$ with each coefficient a product of factorials and the Riemann zeta function at a non negative even integer, each coefficient is a positive rational multiple of an appropriate power of $\pi$.   The first polynomial has degree $j+1$ in $t^2$, while the second has degree $i+j+2$.  The first part of Theorem \ref{first} is now established.  The second part is the subject of the next lecture.

\noindent\hypertarget{lec4}{{\bfseries Lecture 4: Symplectic reduction, principal $S^1$ bundles and the normal form.}}

The goal of the lecture is to establish the following theorem.   The formula combines with Theorem \ref{wpkap}, the WP kappa equation $\omega=2\pi^2\kappa_1$, to provide that the coefficients of the volume polynomial $V_g(L)$ are tautological intersection numbers.  The result completes the proof of Theorem \ref{first}.
\begin{thrm*}\label{norfor} For $d=\frac12\dim_{\mathbb R}\TgL=\dim_{\mathbb C}\Tgn$, then
\[
V_g(L)\ =\ \frac{1}{d!}\int_{\TgL/\mcg}\omTL^d\ =\ 
\frac{1}{d!}\int_{\Tgn/\mcg} \big(\omega\,+\,\sum_{j=1}^n\frac{L_j^2}{2}\psi_j\big)^d .
\]
\end{thrm*}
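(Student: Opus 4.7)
\medskip

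\noindent\textbf{Proof plan.} The first equality is essentially a definition: with $\omTL = \sum_j d\ell_j \wedge d\tau_j$ being the (real) symplectic form on $\TgL$ of real dimension $2d$, the symplectic volume of the quotient $\MgL$ is $\frac{1}{d!}\int_{\TgL/\mcg}\omTL^d$. The content lies in the second equality, which expresses the $L$-dependence of the volume in terms of a single pair of cohomology classes on the \emph{fixed} space $\Tgn/\mcg$. I would obtain it from the Duistermaat-Heckman normal form stated in the excerpt, together with the identification of the relevant circle-bundle Chern classes with the psi classes.

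\medskip

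\noindent The plan is as follows. First, I introduce the enlarged Teichm\"uller space $\That(L)$ of hyperbolic surfaces of prescribed boundary lengths $L$ equipped with a marked point on each geodesic boundary. The symplectic form $\omThat$, constructed by summing the $d\ell\wedge d\tau$ contributions on an almost-tight pants decomposition, is $\mcg$-invariant and $(S^1)^n$-invariant, where each $S^1$ factor acts by rotating one of the boundary marked points around the corresponding cuff. A direct Hamiltonian computation (using twist-length duality $\omega(\ ,t_{\alpha})=d\ell_{\alpha}$) identifies the $j$-th generator of the torus action with the Hamiltonian flow of $L_j^2/2$, so the moment map is $\mu(R)=(L_1^2/2,\dots,L_n^2/2)$. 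The symplectic reduction at the regular value $\mu=L^2/2$ is $\TgL$ with its form $\omTL$, while the zero-fiber reduction is $\Tgn(0)\approx\Tgn$ with its form $\omega$. Near each $L_j=0$ the fibration degenerates smoothly in the sense that a marked point on a boundary of vanishing length becomes a tangent direction at the puncture: the principal $(S^1)^n$-bundle $\That(L)\to\TgL$ extends, after Deligne-Mumford completion, to the circle bundle whose $j$-th factor is the unit bundle of the tangent line at the $j$-th puncture, and whose $j$-th Chern class is precisely the dual class of $\psi_j$.

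\medskip

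\noindent With this identification of Chern classes, the Duistermaat-Heckman theorem applied to the torus action on $\That(L)/\mcg$ yields the cohomological normal form quoted in the excerpt,
\[
[\omTL] \;=\; [\omega_{\Tgn(0)}] \;+\; \sum_{j=1}^{n}\frac{L_j^2}{2}\,\psi_j
\]
in $H^2(\Tgn/\mcg;\mathbb{R})$ (or more properly on the Deligne-Mumford compactification $\Mbargn$). Here I use that Duistermaat-Heckman says the reduced cohomology class varies linearly in the moment-map parameter, with linear coefficient given by the Chern classes of the principal circle bundle, plus the fact that the zero-reduction recovers $\omega$ on $\Tgn$. Raising both sides to the $d$-th power and dividing by $d!$ transforms the symplectic volume of $\MgL$ into an intersection number on $\Mbargn$, producing the asserted formula.

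\medskip

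\noindent The main obstacle, and the step requiring the most care, is the identification of the circle-bundle Chern class $c_1(\That(L)\to\TgL)$, in the limit $L\to 0$, with the dual psi class $\psi_j^\vee$, together with the compatibility of the Duistermaat-Heckman family across the boundary of moduli space. This requires extending $\That(L)$ to the Deligne-Mumford compactification, checking that the $(S^1)^n$-action and its moment map extend smoothly (i.e., that the nodes caused by pinching an internal geodesic do not interfere with the boundary-twist action), and matching the resulting $S^1$-bundle with the unit tangent bundle along the puncture sections of the universal curve. All remaining steps, the Hamiltonian identification, the linearity in $L^2/2$, the integration against $d!$, are essentially formal consequences of symplectic reduction once the geometric identification has been made.
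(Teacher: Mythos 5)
Your outline follows the paper's argument step for step: the pointed-boundary Teichm\"uller space $\That$ with symplectic form built from the almost-tight-pants (i.e.\ $\caT_{g,2n}$) model, the moment map $R\mapsto (L_1^2/2,\dots,L_n^2/2)$ for the $(S^1)^n$ action rotating boundary points, the identification of the reduced space $\mu^{-1}(\widehat L)/(S^1)^n$ with $(\TgL,\omTL)$, the Duistermaat--Heckman linear variation of the reduced class with coefficients the Chern classes of the boundary-point circle bundles, and the degeneration of these bundles to line bundles at the punctures after extension over the compactified moduli space. So the route is the same as the paper's; the first equality is indeed just the definition of WP volume, and the substance is in the normal form.

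There is, however, one substantive discrepancy, and it sits exactly at the step you yourself flag as the crux. You assert that the boundary-point bundle limits to the unit \emph{tangent} bundle at the $j$-th puncture, ``whose Chern class is precisely the dual class of $\psi_j$,'' i.e.\ $-\psi_j$, and then write the normal form with $+\frac{L_j^2}{2}\psi_j$. As written these two statements are inconsistent: with the Duistermaat--Heckman contribution entering as $+\frac{L_j^2}{2}\,c_1(\widehat\beta_j)$ (the convention the rest of your argument uses), your identification would yield $\omega-\sum_j\frac{L_j^2}{2}\psi_j$, which is wrong -- it would contradict the positivity of the volume polynomial coefficients (e.g.\ $V_{1,1}(L)=\frac{\pi^2}{12}+\frac{L^2}{48}$). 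The paper resolves precisely this sign by an explicit orientation computation: the infinitesimal generator $-L_jt_j$ of the circle action moves the marked point with the surface interior on its left, so in the cusp coordinate the tangent vector at the puncture rotates clockwise while the cotangent vector rotates counterclockwise; hence the $S^1$-principal structure is that of the \emph{cotangent} line, $c_1(\widehat\beta_j)=\psi_j$, and the plus sign in the normal form is forced. To complete your proof you must either carry out this orientation analysis (matching the twist sign convention, the moment map sign, and the complex orientation of the line at the puncture) or exhibit a compensating sign in your statement of Duistermaat--Heckman; leaving the identification as ``tangent line / dual of $\psi_j$'' while quoting the formula with $+\psi_j$ is a genuine gap, since the entire content of the theorem beyond formal reduction is this identification with the correct sign.
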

\noindent The proof is essentially by establishing a cohomology equivalence between symplectic spaces - combining symplectic reduction, the Duistermaat-Heckman theorem and explicit geometry to obtain the formula.

The considerations of the lecture are presented for the appropriate Teichm\"{u}ller spaces $\caT$ and the open moduli spaces $\caT/\mcg$.  The constructions are compatible with the augmentation construction.  The results are valid for the appropriate Teichm\"{u}ller spaces and compactified moduli spaces.  The compactified moduli spaces are orbifolds.  The following results are for cohomology statements over $\mathbb Q$; cohomology arguments over $\mathbb Q$ for manifolds are in general also valid for orbifolds. Alternatively, the orbifold matter can be bypassed by applying the general result that compactified moduli spaces have manifold finite covers, \cite{BogPik, Looijsmo}.  

\noindent {\bfseries The Teichm\"{u}ller spaces.}  We consider the trio.
\begin{itemize}
  \item $\TgL$ - the space of marked genus $g$ hyperbolic surfaces, with geodesic boundaries $\beta_1,\dots,\beta_n$ of prescribed lengths $L_1,\dots,,L_n$.  A hyperbolic surface can be doubled across its geodesic boundary to obtain a compact hyperbolic surface of higher genus.  Accordingly, $\TgL$ can be considered as a locus in $\caT_{2g+n-1}$.  The symplectic form $\omega$ of the image Teichm\"{u}ller space restricts to the locus and defines a symplectic form on the locus.  A pants decomposition for a surface with boundary, can be doubled to give a pants decomposition for a doubled surface.  The doubled pants decomposition is characterized by containing the geodesics $\beta_1,\dots,\beta_n$ and being symmetric.  Fenchel-Nielsen coordinates and the $d\ell\wedge d\tau$ formula are applied for doubled decompositions to obtain a description of the locus $\TgL\subset\caT_{2g+n-1}$, and to define a symplectic from $\omTL$.  The symplectic form  is given as $\sum_j d\ell_j\wedge d\tau_j$ (without boundary parameters) for any pants decomposition of a surface with boundaries. $\mcg$ invariance is immediate. 
     
  \item $\That$ - the space of marked genus $g$ hyperbolic surfaces, with pointed geodesic boundaries $\beta_1,\dots,\beta_n$ - boundary lengths are allowed to vary {\em and} a variable point is given on each boundary.  The $\mathbb R$ dimension of $\That$ is $2n$ greater than the $\mathbb R$ dimension of $\TgL$. Symplectic reduction requires a symplectic form on $\That$, that is equivalent to $\omTL$ on $L$ level sets and is invariant under rotating points on boundaries.  A form is given by describing $\That$ as a higher dimensional Teichm\"{u}ller space.  
 
\begin{figure}[htbp] % float placement: (h)ere, page (t)op, page (b)ottom, other (p)age
  \centering
  % file name: C:/Documents and Settings/Scott/Desktop/MyFiles/CBMS/CBMSbook/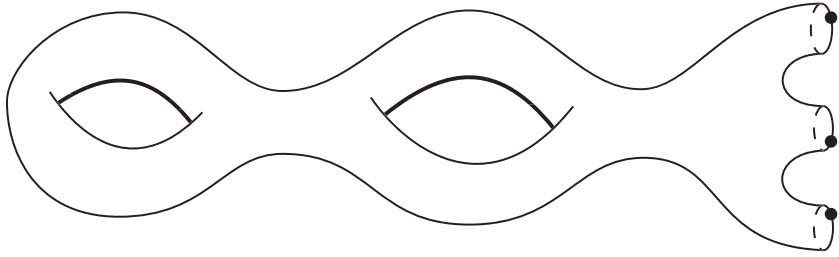
  \includegraphics[bb=0 0 437 127,width=4.5in,height=1.31in,keepaspectratio]{genus23h}
  \caption{A genus $2$ surface with three pointed boundaries.}
  \label{fig:genus23h}
\end{figure}

To this purpose, introduce almost tight pants - pairs of pants with two labeled boundaries being punctures (length zero) and a third boundary of prescribed length.  An almost tight pants will be glued to each surface boundary $\beta_j$.  The pants equatorial reflection defines symmetric points on the pants boundary; the puncture labeling uniquely determines an equatorial boundary point.  A standard model for a surface with pointed geodesic boundaries is given by gluing on almost tight pants - at each boundary glue on an almost tight pants with matching boundary length and the equatorial point aligned with the point on the boundary.  The construction does not involve choices, so is natural with respect to marking homeomorphisms and the $\mcg$ action.  
\begin{figure}[htbp] % float placement: (h)ere, page (t)op, page (b)ottom, other (p)age
  \centering
  % file name: C:/Documents and Settings/Scott/Desktop/MyFiles/CBMS/CBMSbook/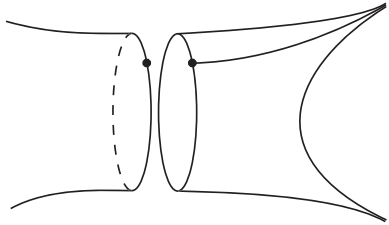
  \includegraphics[bb=0 0 461 265,width=2in,height=1.15in,keepaspectratio]{almtight}
  \caption{Aligning boundary and equatorial points to glue on almost tight pants.}
  \label{fig:almtight}
\end{figure}

For a surface $R$ with labeled, pointed boundaries, write $\widehat R$ for the standard model surface with glued on almost tight pants.  The punctures of $\widehat R$ are labeled in pairs.  By the general hyperbolic collar result, small length geodesics are necessarily disjoint
 \cite{Busbook,Wlcbms}.  For the lengths $L_1,\dots,L_n$ suitably small, the labeled geodesics $\beta_1,\dots,\beta_n$ are uniquely determined on the surface $\widehat R$ by having small length and bounding labeled punctures.  The pointed boundary, marked surface $R$ {\em is equivalent} to the marked surface $\widehat R$ modulo Dehn twists about the $\beta_j$ (Dehn twists, since the boundary points are given on a circle).   In particular, for $c$ suitably small, the equivalence is between the open subset $\{L<c\}$ of $\That$ and the open subset $\{L<c\}$ in $\caT_{g,2n}/\prod_j\mbox{Dehn}(\beta_j)$.

Definition and equivalence of $S^1$ principal bundles are next.  Considerations begin with the short exact sequence from Theorem \ref{covol},
\[
1\longrightarrow\mbox{Dehn}(\beta)=\prod_j\mbox{Dehn}(\beta_j) \longrightarrow\mbox{Stab}(\beta)= \bigcap_j\stab(\beta_j)\longrightarrow  \mcg(R)\longrightarrow 1,
\]
(now $\stab(\beta_j)=\stab_0(\beta_j)$, since an orientation preserving pants homeomorphism preserves boundary orientation).  The geodesics $\beta_1,\dots,\beta_n$ define subsets of the Riemann surface bundles (the universal curves) over $\That$ and over $\caT_{g,2n}/\mbox{Dehn}(\beta)$.  The subsets define oriented circle bundles, provided automorphisms of the Riemann surfaces act at most as rotations on the individual geodesics.  The small lengths $L_1,\dots,L_n$ and labeled boundaries provide the condition.  The geodesics define circle bundles over Teichm\"{u}ller bases.  We see below that rotation along geodesics defines an $S^1$ principal structure.  Next, from the above short exact sequence and the definition of marking - the equivalence between geodesics $\beta\subset R$ and $\beta\subset\widehat R$, and the projections of circle bundles to bases, commute with the actions of $\mcg(R)\approx \stab(\beta)/\mbox{Dehn}(\beta)$.  The geodesics define equivalent orbifold $S^1$ principal bundles over the quotients $\{L<c\}\subset\That/\mcg(R)\times(S^1)^n$ (the $\mcg(R)$ and $(S^1)^n$ actions on $\That$ commute) and $\{L<c\}\subset\caT_{g,2n}/\stab(\beta)$.  

The symplectic form of $\caT_{g,2n}$ defines a symplectic form $\omThat$ on the open subset $\{L<c\}$.  Fenchel-Nielsen coordinates and the $d\ell\wedge d\tau$ formula are applied.  The form $\omThat$ is given as $\sum_kd\ell_k\wedge d\tau_k$ for any pants decomposition of $\widehat R$ containing the multicurve $\beta$.  Importantly, the form $\omThat$ is given for surfaces $R$ with pointed boundaries $\beta_j$, by an extended interpretation of the $d\ell\wedge d\tau$ formula, with a term for each boundary, now with the interpretation that $\tau(\beta_j)$ parameterizes the location of the specified point.  See Figure \ref{fig:genus3t}, the parameter $\tau(\beta_j)$ increasing corresponds to the point moving on the boundary with the surface interior on the right.  $\mcg(R)$ invariance of the symplectic form is immediate.  Restriction of the form to $L$ level sets and invariance under rotating boundary points are discussed below.

  \item $\Tgn$ - the space of marked genus $g$ hyperbolic surfaces with $n$ punctures.  $\Tgn$ has the $\mcg$ invariant symplectic form $\omega$. $\Tgn$ coincides with the Teichm\"{u}ller space $\Tg(0)$, where surface boundary lengths are zero. 
\end{itemize}
We will relate the three symplectic manifolds. 

\noindent{\bfseries Symplectic reduction for $\That$.}  We consider the Hamiltonian geometry of FN twists, geodesic-lengths and especially the moment map
\[
\That\ \stackrel{\mu}{\longrightarrow}\ \widehat L=(L^2_1/2,\dots,L^2_n/2)\in \mathbb R_{\ge 0}^n.
\]
Write $t_j$ for the unit speed infinitesimal rotation of the point on the boundary $\beta_j$; $t_j$ is a vector field on $\That$.  In terms of the standard model surfaces, $t_j$ is the FN infinitesimal twist vector field for $\beta_j$, and $t_j$ is the infinitesimal rotation of the $j^{th}$ almost tight pants.  By twist-length duality, we have $\omThat(-t_j,\ )=dL_j$ and the scaled $-L_jt_j$ is unit infinitesimal rotation (unit time flow is a full rotation).  The function $\frac12L_j^2$ is the corresponding Hamiltonian potential, since $\omThat(-L_jt_j,\ )=d(\frac12L_j^2)$ (the momentum $\frac12L^2$ determines the twist sign/orientation).  The vector fields $-L_jt_j$ are the infinitesimal generators for the $(S^1)^n$ action on $\That$, given by rotating the boundary points.  The symplectic form $\omThat$ is twist invariant and we are ready for symplectic reduction, ready to consider the quotient $\That/(S^1)^n$.

\begin{figure}[htbp] % float placement: (h)ere, page (t)op, page (b)ottom, other (p)age
  \centering
  % file name: C:/Documents and Settings/Scott/Desktop/MyFiles/CBMS/CBMSbook/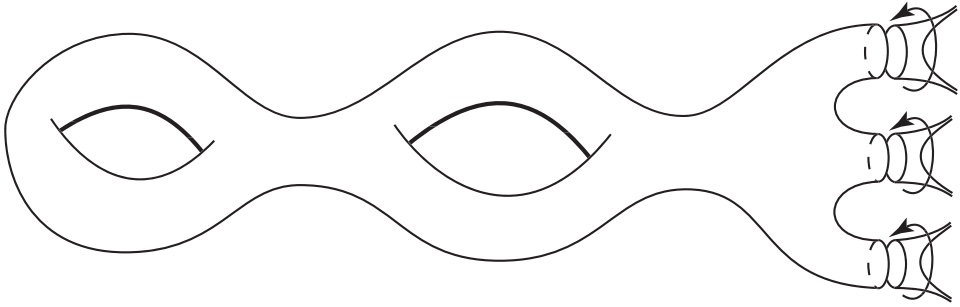
  \includegraphics[bb=0 0 454 144,width=5in,height=1.59in,keepaspectratio]{S1act}
  \caption{Positive rotations for an $(S^1)^3$ action.}
  \label{fig:S1act}
\end{figure}

A level set of the moment map $\mu:\That\longrightarrow\mathbb R^n$ is a locus of prescribed 
$\beta$ length hyperbolic surfaces.  The group $(S^1)^n$ acts on level sets by rotating almost tight pants.  The quotient of a level set by the group is naturally $\TgL$ - the level set prescribes the boundary lengths and the group action removes the location information for the points.

\begin{prop}\textup{Symplectic reduction.}\label{symred} For $\That/(S^1)^n\approx \TgL$, then 
\[
\omThat \big|_{\mu^{-1}(\widehat L)}/(S^1)^n\ \approx\ \omTL.
\]
\end{prop}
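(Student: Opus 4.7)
The plan is to invoke Marsden--Weinstein symplectic reduction in the standard form: for a Hamiltonian $G$-action on $(M,\omega)$ with moment map $\mu$, each regular value $c$ yields a reduced space $\mu^{-1}(c)/G$ carrying a unique symplectic form $\omega_{\mathrm{red}}$ characterized by $\pi^{*}\omega_{\mathrm{red}} = \iota^{*}\omega$, where $\iota : \mu^{-1}(c) \hookrightarrow M$ and $\pi : \mu^{-1}(c) \to \mu^{-1}(c)/G$ are the inclusion and projection. Thus the proposition reduces to three verifications: (i) $\mu$ is a moment map for the $(S^{1})^{n}$-action on $\That$; (ii) $\mu^{-1}(\widehat{L})/(S^{1})^{n} \approx \TgL$; and (iii) the resulting reduced form equals $\omTL$.

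For (i), the text already assembles all needed ingredients: twist-length duality gives $\omThat(-t_{j}, \,\cdot\,) = dL_{j}$, so $\omThat(-L_{j} t_{j}, \,\cdot\,) = L_{j}\, dL_{j} = d(L_{j}^{2}/2)$. Since $-L_{j}t_{j}$ is the infinitesimal generator of the $j$-th circle factor (unit time flow being a full rotation of the point on $\beta_{j}$), the function $L_{j}^{2}/2$ is its Hamiltonian potential, making $\mu = (L_{1}^{2}/2,\dots,L_{n}^{2}/2)$ a moment map; invariance of $\omThat$ follows from Cartan's formula applied to the closed form $\omThat$. For (ii), a point of $\mu^{-1}(\widehat{L})$ is a marked hyperbolic surface with geodesic boundary components of prescribed lengths $L_{j}$ and a choice of boundary marked point on each; the torus action permutes the boundary points freely within each fixed underlying hyperbolic structure, so the quotient parameterizes precisely marked hyperbolic surfaces with prescribed boundary lengths, by definition $\TgL$. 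Away from $\widehat{L}=0$ the differentials $dL_{j}$ are independent (they are dual to the independent twist fields $t_{j}$), so the level set is smooth and the action is locally free, validating the reduction.

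For (iii), I would compute in Fenchel--Nielsen coordinates. Pick any pants decomposition of the underlying surface containing the boundary multicurve $\beta = \beta_{1}\cup\cdots\cup\beta_{n}$, and adjoin the boundary twist coordinates $\tau(\beta_{j})$ tracking the positions of the marked points. The extended $d\ell\wedge d\tau$ formula for $\omThat$ reads
\[
\omThat \;=\; \sum_{k\text{ interior}} d\ell_{k}\wedge d\tau_{k} \;+\; \sum_{j=1}^{n} dL_{j}\wedge d\tau(\beta_{j}).
\]
On $\mu^{-1}(\widehat{L})$ the $L_{j}$ are constant, so $dL_{j}=0$ and the boundary terms drop out. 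The residue is exactly the $d\ell\wedge d\tau$ expression for $\omTL$ in the same interior pants coordinates, pulled back via $\pi$. Because $\omega_{\mathrm{red}}$ is uniquely determined by $\pi^{*}\omega_{\mathrm{red}} = \iota^{*}\omThat$ and $\pi^{*}\omTL$ satisfies the same equation, we conclude $\omega_{\mathrm{red}} = \omTL$.

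The main obstacle is the bookkeeping around the extended $d\ell\wedge d\tau$ formula on $\That$: one must justify that the boundary-point parameter $\tau(\beta_{j})$ genuinely plays the role of the twist conjugate to $L_{j}$, with consistent sign and normalization, so that the symplectic form splits cleanly into interior and boundary pieces as displayed. This is where the standard-model description of $\That$ via gluing almost tight pants pays off, since then $\tau(\beta_{j})$ is literally the Fenchel--Nielsen twist parameter for the gluing, and the $d\ell\wedge d\tau$ formula on $\caT_{g,2n}$ transcribes directly onto $\That$ with the boundary length $L_{j}$ substituted for the corresponding $\ell$-coordinate.
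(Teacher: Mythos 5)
Your proposal is correct and follows essentially the same route as the paper: the decisive step in both is the extended $d\ell\wedge d\tau$ expression for $\omThat$ in a pants decomposition of the standard model containing $\beta$, with the observation that the $dL_j$ terms vanish on a level set of $\mu$, leaving exactly the $d\ell\wedge d\tau$ formula for $\omTL$. Your items (i) and (ii) merely make explicit the Hamiltonian/moment-map setup and the identification $\mu^{-1}(\widehat L)/(S^1)^n\approx\TgL$ that the paper records in the paragraphs immediately preceding the proposition.
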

\begin{proof}
The form $\omThat$ is given by the $d\ell\wedge d\tau$ formula for any pants decomposition of a standard model surface containing the multicurve $\beta$.  The differentials $dL_j$ vanish on $\mu$ level sets and the formula reduces to the sum for a pants decomposition of a surface with boundary, a sum without boundary parameters - the $\omTL$ formula.
\end{proof}
   
\noindent{\bfseries $S^1$ principal bundles.}  We review basics about characteristic classes.

\begin{defn} Let $\pi:P\longrightarrow M$ be a smooth circle bundle over a smooth compact manifold $M$.  The bundle is $S^1$ principal provided,
\begin{enumerate}
  \item $S^1$ acts freely on $P$,
  \item $\pi(p_1)=\pi(p_2)$ if and only if there exists $s\in S^1$, such that $p_1\cdot s =p_2$.
\end{enumerate}
A connection for an $S^1$ principal bundle is a smooth distribution $\caH\subset\mathbf T P$ of tangent subspaces such that,
\begin{enumerate}
  \item $\mathbf T_pP=\caH_p\oplus\ker \pi_*\big|_p$, for each $p\in P$, 
  \item $s^*\caH_p=\caH_{p\cdot s}$.  
\end{enumerate}
\end{defn} 

\noindent A connection is uniquely given as $\caH=\ker A$, for a $1$-form $A$ on $P$, provided $A$ is $S^1$ invariant and $A(\dot{s})=1$. An $S^1$ invariant inner product $\langle\ ,\ \rangle$ provides an example of an invariant $1$-form by $A(v)=\langle v,\dot{s}\rangle/\langle\dot{s},\dot{s}\rangle$. The curvature $2$-form on $P$ for a connection is $\Phi(v,w)=dA(\operatorname{hor}v, \operatorname{hor}w)$, for $\operatorname{hor}$ the {\em horizontal} projection of $\mathbf TP$ to $\caH$.
\begin{thrm*} \cite{MilSta}. 
There exists a unique closed $2$-form $\Omega$ on $M$, such that $\Phi=\pi^*\Omega$.  The cohomology class of $\Omega$ is independent of the choice of $S^1$ principal connection for $P$ and the first Chern class is $c_1(P)\,=\,[\Omega]\in H^2(M,\mathbb Z)$.
\end{thrm*}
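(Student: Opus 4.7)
The claim factors naturally into three assertions, and I would address them in the order stated: (i) existence and uniqueness of a closed $\Omega \in \Omega^2(M)$ with $\pi^*\Omega = \Phi$, (ii) independence of the de Rham class $[\Omega]$ from the choice of connection, (iii) identification of $[\Omega]$ with the integer first Chern class $c_1(P)$. Steps (i) and (ii) are essentially formal; step (iii) is the real content.

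\textbf{Step (i): descent and closedness.} Uniqueness is immediate since $\pi_*$ is surjective on tangents. For existence I would verify that $\Phi$ is \emph{basic}, i.e.\ horizontal and $S^1$-invariant. Horizontality is built into the definition $\Phi(v,w) = dA(\operatorname{hor} v, \operatorname{hor} w)$, and invariance follows from $S^1$-invariance of $A$ and of the horizontal distribution $\caH$. A basic form descends uniquely to the quotient, producing $\Omega$. To see $d\Omega = 0$, I would prove the cleaner identity $\Phi = dA$ as $2$-forms on $P$. Letting $X$ denote the fundamental vector field of the action, Cartan's formula gives $\iota_X dA = \mathcal{L}_X A - d\iota_X A = 0 - d(1) = 0$, so $dA$ is horizontal; it is $S^1$-invariant since $A$ is, hence basic, and must agree with $\Phi$ on a horizontal pair. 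Then $\pi^* d\Omega = d\Phi = d^2 A = 0$, and injectivity of $\pi^*$ on forms forces $d\Omega = 0$.

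\textbf{Step (ii): connection independence.} Given two connections $A_0, A_1$, their difference $A_1 - A_0$ is $S^1$-invariant and vanishes on $\ker \pi_*$ (both satisfy $A(\dot s) = 1$), hence is basic and equals $\pi^*\alpha$ for a unique $\alpha \in \Omega^1(M)$. Then $\pi^*(\Omega_1 - \Omega_0) = \Phi_1 - \Phi_0 = d(A_1 - A_0) = \pi^* d\alpha$, so $\Omega_1 - \Omega_0 = d\alpha$ is exact, and $[\Omega]$ is a well-defined de Rham cohomology class of $M$.

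\textbf{Step (iii): identification with $c_1(P)$.} This is where I expect the main difficulty, since $c_1(P) \in H^2(M;\mathbb{Z})$ is a priori defined by transition cocycles while $[\Omega]$ lives in de Rham cohomology. My preferred route is the \v{C}ech--de Rham double complex. Choose a good cover $\{U_\alpha\}$ over which $P$ trivializes with transition functions $g_{\alpha\beta}: U_\alpha \cap U_\beta \to S^1$ representing $c_1(P)$ via the Bockstein of the exponential sequence. Using a partition of unity, build a global connection whose local form is $A = d\theta_\alpha + \pi^* \eta_\alpha$, where $\theta_\alpha$ is the local fiber angular coordinate. On overlaps, compatibility forces $\eta_\beta - \eta_\alpha = -d\log g_{\alpha\beta}/(2\pi i)$, while $\Omega|_{U_\alpha} = d\eta_\alpha$. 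Zig-zagging the double complex then identifies the de Rham class $[\Omega]$ with the \v{C}ech class of $\{g_{\alpha\beta}\}$, which is $c_1(P)$. As a sanity check (and as an alternative proof strategy via naturality) one verifies the claim on the Hopf bundle $S^3 \to S^2$, where $[\Omega]$ evaluates to $1$ on the fundamental class, pinning down the normalization; the general case then follows from naturality and the universal property of $BS^1 = \mathbb{CP}^\infty$. The bookkeeping of signs and the factor of $2\pi$ in passing between $d\log$ and the integral class is the step where I would need to be most careful.
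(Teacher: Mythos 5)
The paper does not actually prove this statement; it is quoted as background and attributed to Milnor--Stasheff \cite{MilSta}, so there is no internal argument to compare against. Judged on its own, your proof is the standard one and is essentially correct. Steps (i) and (ii) are complete as written: the Cartan-formula computation $\iota_X dA=\mathcal{L}_XA-d\iota_XA=0$ shows $dA$ is basic and equals $\Phi$ (this uses that the structure group is abelian, which is why no $A\wedge A$ term appears), and the difference of two connections being $\pi^*\alpha$ for a global $1$-form $\alpha$ gives $\Omega_1-\Omega_0=d\alpha$. Step (iii) is, as you say, where the content lies, and your \v{C}ech--de Rham plan is the right mechanism: in a local trivialization any invariant $1$-form with $A(\dot s)=1$ is $d\theta_\alpha+\pi^*\eta_\alpha$, the overlap relation $\eta_\beta-\eta_\alpha=-\tfrac{1}{2\pi i}d\log g_{\alpha\beta}$ holds, and the zig-zag sends $[\Omega]=[d\eta_\alpha]$ to the integral \v{C}ech cocycle $\phi_{\alpha\beta}+\phi_{\beta\gamma}+\phi_{\gamma\alpha}\in\mathbb{Z}$ coming from the exponential sequence, i.e.\ $\pm c_1(P)$. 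The one point you have flagged but not settled is precisely the point that needs settling to match the statement as quoted: the normalization $A(\dot s)=1$ (unit time equals a full rotation) is what absorbs the usual $i/2\pi$ Chern--Weil factor and makes $[\Omega]$ integral, and the final sign depends on the orientation convention for the $S^1$ action versus the convention defining $c_1$; your Hopf-bundle/naturality check is an adequate way to pin this down, so this is bookkeeping rather than a gap in the ideas.
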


As above, the variable point on the boundary $\beta_j$ of the surface $R$ defines an $S^1$ principal bundle $\widehat \beta_j$ over $\TgL$; $S^1$ acts by moving the point with the surface interior on the left.  A choice of connection for the bundle gives a first Chern class $c_1(\widehat\beta_j)$. 

\noindent {\bfseries Applying the Duistermaat-Heckman theorem.}   We extend the definition of $\That$ to include $L=0$; geodesic boundaries of $R$ can be replaced with punctures. Hyperbolic structures converge for boundary lengths tending to zero; in particular collar regions converge to cusp regions.  The extension of $\That$ is given by parameterizing boundary points by points on a collar/cusp region boundary. 

We recall basics about collars and cusps. For a geodesic $\alpha$ of length $\ell_{\alpha}$, the standard collar in the upper half plane $\mathbb H$ is $\caC(\ell_{\alpha})=\{d(z,i\mathbb R_+)\le w(\alpha)\}$, for the half width $w(\alpha)$ given by  $\sinh w(\alpha)\sinh\ell_{\alpha}/2=1$.  The quotient cylinder $\{d(z,i\mathbb R_+)\le w(\alpha)\}/\langle z\mapsto e^{\ell_{\alpha}}z\rangle$ embeds into $R$ to give a collar neighborhood of the geodesic.  For a cusp, the standard cusp in $\mathbb H$ is $\caC_{\infty}=\{\Im z\ge1/2\}$.  The quotient cylinder $\{\Im z\ge1/2\}/\langle z\mapsto z+1\rangle$ embeds into $R$ to give a cusp region. The boundary of a collar, for $\ell_{\alpha}$ bounded, and boundary of a cusp region have length approximately 2. Collars and cusp regions are foliated by geodesics normal to the boundary.  For geodesic-lengths tending to zero, half collar neighborhoods Gromov-Hausdorff converge to a cusp region (convergence is uniform on bounded distance neighborhoods of the boundary); boundaries and geodesics normal to the boundary converge.  
\begin{figure}[htbp] % float placement: (h)ere, page (t)op, page (b)ottom, other (p)age
  \centering
  % file name: C:/Documents and Settings/Scott/Desktop/MyFiles/CBMS/CBMSbook/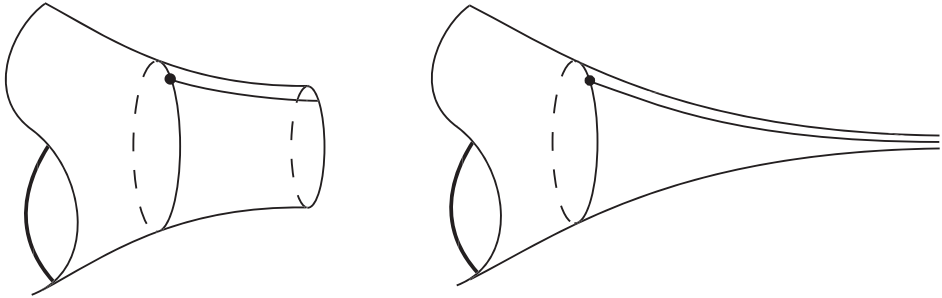
  \includegraphics[bb=0 0 509 158,width=5in,height=1.55in,keepaspectratio]{projbdry}
  \caption{Projecting along geodesics to a collar and a cusp region boundary.}
  \label{fig:projbdry}
\end{figure}

The geodesics normal to the boundary of a collar provide a projection from the core geodesic to each collar boundary.  The projection is used to note that prescribing a point on a geodesic boundary of $R$ is equivalent to prescribing a point on the boundary of the half collar neighborhood of the geodesic.  Since collars and their boundaries converge to a cusp region and its boundary, for core geodesic length tending to zero, we have a description for the extension of the definition of $\That$ to include surfaces with collections of lengths $L_j$ zero.   

The standard cusp region is uniformized by the variable $w=e^{2\pi iz}$.  A point on the cusp region boundary $\Im z=1/2$ corresponds to a point on $|w|=e^{-\pi}$ and given the factor $e^{-\pi}$, a point on the circle corresponds to a tangent vector at the origin.  The variable $w$ is unique modulo multiplication by a unimodular number; the identification of the circle with tangent vectors at the origin is canonical.  For an $S^1$ infinitesimal generator $-L_jt_j$, displacement is to the left when crossing the geodesic, (compare to the Figure \ref{fig:genus3t} positive twist, right displacement) the reference point moves with the surface interior on its left, the tangent vector at the origin rotates clockwise, and a dual cotangent vector rotates counter clockwise (the positive direction for a $\mathbb C$-line). Combining equivalences, the $S^1$ principal bundle of a point on a cusp region boundary is equivalent to a non zero vector in the cotangent line for the puncture.

\noindent {\bfseries Proof of Theorem \ref{norfor}.}  The $\mcg(R)$ and $(S^1)^n$ actions on $\That$ commute; consider the quotient $\That/\mcg(R)\times(S^1)^n$.  By the Duistermaat-Heckman theorem, \cite[Chapter 30, Theorem 30.8]{Cansymp}, for small values of $\widehat L$, including $0$, the reduced level sets $\mu^{-1}(\widehat L)/(S^1)^n$ are mutually diffeomorphic. Furthermore by Duistermaat-Heckman, the $L$ level set reduced symplectic form  $\omThat \big|_{\mu^{-1}(\widehat L)}/(S^1)^n$ is cohomologous to the sum of the $0$ level set reduced form $\omThat \big|_{\mu^{-1}( 0)}/(S^1)^n$ 
and the contributions $(L_j^2/2)\,c_1(\widehat\beta_j)$, for $c_1(\widehat\beta_j)$ the first Chern class for the $S^1$ principal bundle of a point varying on the $j^{th}$ cusp region boundary. Combining with Proposition \ref{symred}, gives the desired cohomology equivalence, 
\[
\omTL\ \equiv\ \omega_{\Tg(0)}\ +\ \sum_{j=1}^n\frac{L_j^2}{2}c_1(\widehat\beta_j).
\]
By the description of collars and cotangent lines at punctures, the circle bundle $\widehat\beta_j$ is topologically equivalent to the psi line bundle $\psi_j$ (see Lecture 1) with equality of first Chern classes. The proof is finished.

\noindent\hypertarget{lec5}{{\bfseries Lecture 5: The pattern of intersection numbers and Witten-Kontsevich.}}

We begin with the discussion of Harris-Morrison \cite[pgs. 71-75]{HMbook}.  For a finite sequence of non negative integers $\{\alpha_j\}$, define the top $\psi$-intersection number by
\[
\langle\tau_{\alpha_1}\tau_{\alpha_2}\cdots\tau_{\alpha_n}\rangle_g\,=\,\int_{\overline{\mathcal M}_{g,n}}\psi_1^{\alpha_1}\psi_2^{\alpha_2}\cdots\psi_n^{\alpha_n}.
\]
For a non trivial pairing,  the genus $g$, number of punctures $n$, and exponents $\alpha_j$ are related by $3g-3+n=\sum_{j=1}^n \alpha_j$, otherwise the pairing is defined as zero.  More generally using exponents to denote powers (repetitions) of the variables $\tau$, define
\[
\langle\tau_0^{d_0}\tau_1^{d_1}\cdots\tau_m^{d_m}\rangle_g\,=\,\int_{\overline{\mathcal M}_{g,d}}\prod_{j=0}^m\,\prod_{k=1}^{d_j}\,\psi_{(j,k)}^{\,j}.
\]
($\tau_j^{d_j}$ denotes that for $d_j$ punctures, the associated $\psi$ is raised to the $j^{th}$ power; the subscripts $(j,k)$ are distinct puncture labels.)   For the second pairing, the formal count of punctures is $d=\sum_{j=0}^md_j$, and the formal degree of the product is the count of $\psi$ factors $\sum_{j=0}^mjd_j$.   For a non trivial pairing, the genus, number of punctures and degree are related by $3g-3+d=\sum_{j=0}^mjd_j$, otherwise the pairing is zero.  The psi classes are known to be positive - integrals of products over subvarieties are positive; the non trivial pairings are positive, matching Mirzakhani's positivity of volume polynomial coefficients, see Theorem \ref{first}.

Witten considered a partition function (probability of states), for two-dimensional gravity.  For an infinite vector $\mathbf t=(t_0,t_1,\dots,t_n,\dots)$, and $\gamma$ the formal sum $\gamma=\sum_{j=0}^{\infty}t_j\tau_j$, Witten introduced a genus $g$ generating function for $\tau$ products,
\[
F_g(\mathbf t)=\sum_{n=0}^{\infty}\frac{\langle\gamma^n\rangle_g}{n!},
\]
in which the numerator is defined by monomial expansion, resulting in the formal power series
\[
F_g(\mathbf t)=\sum_{\{d_j\}}\ \langle\prod_{j=0}^{\infty}\tau_j^{d_j}\rangle_g\, \prod_{j=0}^{\infty}\frac{t_j^{d_j}}{d_j!},
\]
where the sum is over all sequences of non negative integers $\{d_j\}$ with only finitely many non zero terms.  --By Theorem \ref{first}, the intersection numbers $\langle\tau_0^{d_0}\tau_1^{d_1}\cdots\tau_m^{d_m}\rangle_g$ are the coefficients of the leading terms of the volume polynomials $V_{g,n}(L)$.--

The quantum gravity partition function is
\[
\mathbf F(\lambda,\mathbf t)\,=\,\sum_{g=0}^{\infty}\lambda^{2g-2}F_g(\mathbf t).
\]
Based on a realization of the function in terms of matrix integrals, Witten conjectured that the partition function should satisfy two forms of the Korteweg-deVries (KdV) equations.  
Kontsevich gave a proof of the conjecture using a cell decomposition of the moduli spaces $\Mgn$, \cite{Kont}.  Cells are enumerated by ribbon graphs/fat graphs.  Kontsevich encoded the intersection numbers in an enumeration of trivalent ribbon graphs.  He then used Feynman diagram techniques and a matrix Airy integral to establish Witten's conjectures.  

Two basic relations for the intersection numbers are: for $n>0$ and $\sum_i\alpha_i=3g-2+n>0$, the
\[
\mbox{string equation}\qquad \langle\tau_0\tau_{\alpha_1}\cdots\tau_{\alpha_n}\rangle_g\,=\,\sum_{\alpha_i\ne 0}\langle\tau_{\alpha_1}\cdots\tau_{\alpha_i-1}\cdots\tau_{\alpha_n}\rangle_g,
\]
and for $n\ge0$ and $\sum_i\alpha_i=3g-3+n\ge0$, the
\[
\mbox{dilaton equation}\qquad\quad \langle\tau_1\tau_{\alpha_1}\cdots\tau_{\alpha_n}\rangle_g\,=\,(2g-2+n)\langle\tau_{\alpha_1}\cdots\tau_{\alpha_n}\rangle_g.
\]
The first equation is for adding a new puncture without an associated factor of $\psi$ in the product, while the second equation is for adding a new puncture with a single associated factor of $\psi$.  

Similar to setting $V_{0,3}(L)=1$, the intersection symbol for the thrice punctured sphere is normalized to $\langle\tau_0^3\rangle_0=1$. The general genus $0$ formula is
\[
\langle \tau_{\alpha_1}\cdots\tau_{\alpha_n}\rangle_0\,=\,\bigg(\frac{n-3}{\alpha_1!\cdots\alpha_n!}\bigg),
\]
with the right hand side a multinomial coefficient for $n-3$.  The genus $0$ string equation is simply Pascal's multinomial neighbor relation.  

For genus $1$, Theorem \ref{norfor} and the WP kappa equation, Theorem \ref{wpkap}, give $V(L)=\int_{\mathcal M_{1,1}} 2\pi^2\kappa_1+\frac{L^2}{2}\psi$.  The formula combines with the Lecture 3 calculation  $V_{1,1}(L)=\frac{\pi^2}{12}+\frac{L^2}{48}$ (now including the elliptic involution $\frac12$ factor) to provide the evaluations, 
\[
\frac12\int_{\mathcal M_{1,1}}\kappa_1\,=\,\langle\tau_1\rangle_1\,=\,\frac{1}{24}.
\]   
General genus $1$ evaluations are found from the single evaluation by applying the string and dilaton equations.  A consequence of the Witten conjecture is that all $\langle\tau\rangle$ intersections can be calculated from the initial values $\langle\tau_0^3\rangle_0=1$ and $\langle\tau_1\rangle_1=\frac{1}{24}$, using the Virasoro equations $L_n(e^{\mathbf F})=0$ for the partition function described below.

In the \hyperlink{volrecur}{volume recursion}, leading coefficients are obtained from leading coefficients - the recursion specializes to leading coefficients, see \cite[Lemma 5.3]{Mirwitt}.  We now sketch the application of the specialized recursion to relations for the partition function and a solution of Witten's conjecture.  

Relations come from the Virasoro Lie algebra.  The Witt subalgebra is generated by the differential operators $\mathcal L_n=-z^{n+1}\partial/\partial z,\, n\ge-1$, with commutators $[\mathcal L_n,\mathcal L_m]=(n-m)\mathcal L_{n+m}$. The string and dilaton equations can be written as linear homogeneous differential equations for the exponential $e^{\mathbf F}$ of the partition function.  The differential operator for the string equation is
\[
L_{-1}\,=\,-\frac{\partial}{\partial t_0}\,+\,\frac{\lambda^{-2}}{2}\,t_0^2\,+\,\sum_{j=0}^{\infty}t_{j+1}\frac{\partial}{\partial t_j},
\]
and the differential operator for the dilaton equation is
\[
L_0\,=\,-\frac32\frac{\partial\quad}{\partial t_1}\,+\,\sum_{j=0}^{\infty}\frac{2j+1}{2}t_j\frac{\partial}{\partial t_j}\,+\,\frac{1}{16}.
\]
With simple conditions, there is a unique way to extend operator definitions to obtain a representation of the $\{\mathcal L_n\}$ subalgebra.  The general operator is
\begin{multline*}
L_n\,=\,-\frac{(2n+3)!!}{2^{n+1}}\frac{\partial\quad }{\partial t_{n+1}}\,+\,\sum_{j=0}^{\infty}\frac{(2j+2n+1)!!}{(2j-1)!!\,2^{n+1}}\,t_j\, \frac{\partial\quad }{\partial t_{j+n}} \\
+\,\frac{\lambda^2}{2}\,\sum_{j=0}^{n-1}\frac{(2j+1)!!(2n-2j-1)!!}{2^{n+1}}\,\frac{\partial^2\qquad\quad }{\partial t_j\partial t_{n-j-1}},
\end{multline*}
with commutator $[ L_n, L_m]\,=\,(n-m) L_{n+m}$.  
\begin{thrm*}\cite[Theorem 6.1]{Mirwitt}. \textup{The Witten-Kontsevich conjecture: Virasoro constraints.}  For $n\ge -1$, then $L_n(e^{\mathbf F})\,=\,0.$  
\end{thrm*}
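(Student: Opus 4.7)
The plan is to deduce the Virasoro constraints from Mirzakhani's volume recursion by extracting its top-degree part and then repackaging the resulting recursion as a differential equation for the generating function $e^{\mathbf{F}}$.

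First I would combine Theorem \ref{norfor} with the WP kappa equation to identify the top-degree coefficients of $V_{g,n}(L)$, namely those with $|\alpha| = 3g-3+n$, as the pure $\psi$-intersection numbers $\langle \tau_{\alpha_1}\cdots\tau_{\alpha_n}\rangle_g$, up to the standard factorial normalization given in Theorem \ref{first}. This is the link between Mirzakhani's polynomial data and Witten's generating function: the $\kappa_1$-contributions contribute only to sub-leading coefficients and drop out once one restricts to the highest-degree part in $L$.

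Next I would specialize the volume recursion of Lecture~3 to its leading coefficients. The two elementary integrals $\int_0^\infty x^{2j+1} H(x,t)\,dx$ and $\int\!\!\int x^{2j+1} y^{2k+1} H(x+y,t)\,dx\,dy$ evaluate, at the top $t^{2(j+1)}$ or $t^{2(j+k+2)}$ level, to explicit rational multiples of double factorials $(2j+1)!!$ with all $\pi$-factors collapsing into the sub-leading terms. Tracking only the leading monomial in the recursion gives Lemma 5.3 of \cite{Mirwitt}: a closed recursion directly among the numbers $\langle\tau_{d_1}\cdots\tau_{d_n}\rangle_g$. The three ingredients $\caA^{con}$, $\caA^{dcon}$, $\caB$ become, respectively, a single-surface genus-reduction term, a splitting term quadratic in intersection numbers, and a boundary-absorption term linear in intersection numbers.

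The central step is then to match this specialized recursion term-by-term against $L_n(e^{\mathbf{F}}) = 0$. The boundary derivative $\partial(L_1 V)/\partial L_1$ produces the linear $\partial/\partial t_{n+1}$ piece of $L_n$ with coefficient $(2n+3)!!/2^{n+1}$. The $\caB_g$ term, after Leibniz expansion of $e^{\mathbf{F}}$, produces the linear part $\sum_j \tfrac{(2j+2n+1)!!}{(2j-1)!!\,2^{n+1}} t_j \partial/\partial t_{j+n}$, since passing $H(x,L_1\pm L_j)$ to its leading expansion yields exactly those double factorials. The $\caA^{con}_g$ and $\caA^{dcon}_g$ terms both involve $H(x+y, L_1)$ and a product of lower-genus volumes; under the exponential $e^{\mathbf{F}}$ these two cases unify into a single quadratic $\partial^2/\partial t_j \partial t_{n-j-1}$ term with the Leibniz rule automatically summing connected and disconnected contributions, and the symmetry factor $\tfrac{1}{2}$ in front of $\caA_g$ combining with the combinatorial weight $(2j+1)!!(2n-2j-1)!!/2^{n+1}$ expected in $L_n$.

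The main obstacle will be the bookkeeping in this last matching: the double-factorial weights must come out precisely, and one has to reconcile the symmetry factors ($|\sym(\gamma)|$, the $2^{\delta_{1g}\delta_{1n}}$ elliptic factor, and the $\prod 1/d_j!$ from the exponential expansion) with the way the recursion indexes ordered versus unordered splittings. Once the cases $n=-1$ and $n=0$ are identified with the string and dilaton equations (already checked in the text), it is efficient to verify $L_1$ directly from the specialized recursion and then propagate to all higher $n$ via the commutators $[L_n, L_m] = (n-m) L_{n+m}$, which reduce the problem to finitely many base cases. Uniqueness of the solution to the full Virasoro system with initial data $\langle \tau_0^3\rangle_0 = 1$ (equivalently $V_{0,3}=1$) together with the leading-coefficient recursion then closes the argument.
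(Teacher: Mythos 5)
Your main route is the paper's: identify the top-degree coefficients of $V_{g,n}(L)$ with the pure $\psi$-intersection numbers via Theorem \ref{norfor} and the kappa equation, specialize Mirzakhani's recursion to leading coefficients (Lemma 5.3 of \cite{Mirwitt}), and match the resulting relation among the $\langle\tau_{d_1}\cdots\tau_{d_n}\rangle_g$ term by term with $L_n(e^{\mathbf F})=0$: the derivative of $L_1V_{g,n}$ gives the $\partial/\partial t_{n+1}$ term with weight $(2n+3)!!$, the $\caB$ term gives the $t_j\,\partial/\partial t_{j+n}$ terms with weights $(2j+2n+1)!!/(2j-1)!!$, and $\caA^{con}$, $\caA^{dcon}$ combine under the Leibniz rule for $e^{\mathbf F}$ into the quadratic $\partial^2/\partial t_j\partial t_{n-j-1}$ part. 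This is precisely the displayed coefficient relation in the lecture, and ``rearranging'' it is the whole content of the paper's proof, so your central step is the correct one.

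The one genuine flaw is the proposed shortcut at the end: verifying only $L_{-1},L_0,L_1$ and then ``propagating to all higher $n$ via the commutators'' cannot work, because $L_{-1},L_0,L_1$ span a subalgebra isomorphic to $\mathfrak{sl}_2$ that is closed under bracket ($[L_1,L_{-1}]=2L_0$, $[L_1,L_0]=L_1$, $[L_0,L_{-1}]=L_{-1}$), so no iterated commutator of these ever produces $L_2$. You must verify directly at least one constraint with $n\ge 2$ (for instance $L_2$; note $L_{-1}$ and $L_2$ do generate all $L_n$, $n\ge -1$, via $[L_2,L_{-1}]=3L_1$, $[L_2,L_1]=L_3$, $[L_n,L_1]=(n-1)L_{n+1}$), and once the double-factorial bookkeeping is set up the verification for general $n$ costs nothing more than the case $n=2$ — which is why the paper carries out the matching for arbitrary $k_1$ and reads off $L_{k_1-1}(e^{\mathbf F})=0$ in one stroke. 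Also, the closing appeal to uniqueness of solutions of the Virasoro system is not needed for this statement: the claim is only that $e^{\mathbf F}$ is annihilated by the $L_n$, which follows directly from the rearranged recursion; uniqueness matters only if you want to recover the intersection numbers from the constraints.
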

\begin{proof} 
For an exponents multi index $\mathbf k=(k_1,\dots,k_n)$, the volume recursion formula becomes the coefficient relation
\[
(2k_1+1)V_{g,n}(L)[\mathbf k]\,=\,\mathcal A_{g,n}^{con}(L)[\mathbf k]\,+\,\mathcal A_{g,n}^{dcon}(L)[\mathbf k]\,+\,\mathcal B_{g,n}(L)[\mathbf k].
\]
The leading coefficient relation takes the following explicit form 
(following the labeling of boundaries, the punctures are labeled $1,\dots,n$)
\begin{multline*}
(2k_1+1)!!\langle\tau_{k_1}\cdots\tau_{k_n}\rangle\\
=\frac12\,\sum_{i+j=k_1-2}(2i+1)!!(2j+1)!!\sum_{I\subset\{2,\dots,n\}}
\langle\tau_i\tau_{\mathbf k_{I}}\rangle\,\langle\tau_j\tau_{\mathbf k_{I^c}}\rangle\\
+\,\frac12\sum_{i+j=k_1-2}(2i+1)!!(2j+1)!!\langle\tau_i\tau_j\tau_{k_2}\cdots\tau_{k_n}\rangle\\
+\,\sum_{j=2}^n\frac{(2k_1+2k_j-1)!!}{(2k_j-1)!!}\langle\tau_{k_2}\cdots\tau_{k_1+k_j-1}\cdots\tau_{k_n}\rangle.
\end{multline*}
Rearranging the explicit relation provides that $L_{k_1-1}(e^{\mathbf F})=0$.  
\end{proof}

Mulase and Safnuk consider a generating function for the intersections of combinations of the $\kappa_1$ and $\psi$ classes \cite{SafMu}
\[
\mathbf G(s,t_0,t_1,\dots)\,=\,\sum_g\,\langle e^{s\kappa_1+\sum t_j\tau_j}\rangle_g\,=\,\sum_g\sum_{m,\{d_j \}}\langle\kappa_1^m\tau_0^{d_0}\tau_1^{d_1}\cdots\rangle_g\frac{s^m}{m!}
\prod_{j=0}^{\infty}\frac{t_j^{d_j}}{d_j!},
\]
where again products, other than $3g-3+n$-products, are defined as zero.  Mulase and Safnuk use the volume recursion and rearrangement of terms to prove the following.
\begin{thrm*}\cite[Thrm. 1.1]{SafMu} \textup{Virasoro constraints.}  
For each $k\ge -1$, define
\begin{multline*}
\mathcal V_k=-\frac{1}{2}\sum_{i=0}^{\infty}(2(i+k)+3)!!\frac{(-2s)^i}{(2i+1)!}\frac{\partial\qquad }{\partial t_{i+k+1}}+\frac12\sum_{j=0}^{\infty}\frac{(2(j+k)+1)!!}{(2j-1)!!}t_j\frac{\partial\ }{\partial t_{j+k}}\\
+\frac14\sum_{\stackrel{d_1+d_2=k-1}{d_1,d_2\ge 0}}(2d_1+1)!!(2d_2+1)!!\frac{\partial^2\quad }{\partial t_{d_1}\partial t_{d_2}}+\frac{\delta_{k,-1}t_0^2}{4}+\frac{\delta_{k,0}}{48},
\end{multline*}
for the double factorial and Kronecker delta function $\delta_{*,*}$.  
Then
\begin{itemize}
  \item the operators $\mathcal V_k$ satisfy the Virasoro commutator relations $[\mathcal V_n,\mathcal V_m]=(n-m)\mathcal V_{n+m}$; 
  \item the generating function $\mathbf G$ satisfies $\mathcal V_k(e^{\mathbf G})=0$ for $k\ge -1$.
\end{itemize}
The initial conditions and second system of equations uniquely determine the generating function. 
\end{thrm*}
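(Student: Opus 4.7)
The plan is to imitate the derivation of the pure-$\psi$ Virasoro constraints from Mirzakhani's volume recursion, but to retain the \emph{full} polynomial information of $V_{g,n}(L)$ rather than only its leading coefficients. The bridge from volumes to mixed $\kappa_1$-$\psi$ intersections is the WP kappa equation $\omega=2\pi^2\kappa_1$ combined with Theorem \ref{norfor}: expanding
\[
V_{g,n}(L)\,=\,\frac{1}{d!}\int_{\Mbargn}\Big(2\pi^2\kappa_1+\sum_{j=1}^n\tfrac{L_j^2}{2}\psi_j\Big)^d
\]
shows that every coefficient of the volume polynomial is, up to a combinatorial constant and a fixed power of $2\pi^2$, one of the numbers $\langle\kappa_1^m\prod_j\tau_{\alpha_j}\rangle_g$. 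Thus the two-variable generating function $\mathbf G(s,\mathbf t)$ is nothing other than a repackaging of all of the volume polynomials $V_{g,n}(L)$ taken together, with the $s$-variable recording the $\kappa_1$-weight and the $t_j$-variables recording the $\psi_j$-weights.

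First I would convert the volume recursion $(2k_1+1)V_{g,n}(L)[\mathbf k]=\caA^{con}_{g,n}[\mathbf k]+\caA^{dcon}_{g,n}[\mathbf k]+\caB_{g,n}[\mathbf k]$, which in Lecture 3 was specialised to leading coefficients, into the full coefficient identity by keeping all powers of the $L_j$. Using the closed evaluations of the elementary integrals $\int_0^\infty x^{2j+1}H(x,t)\,dx$ and $\int\!\!\int x^{2j+1}y^{2k+1}H(x+y,t)\,dx\,dy$ --- each a polynomial in $t^2$ whose coefficients are even-integer zeta values, hence positive rational multiples of explicit powers of $\pi^2$ --- the recursion becomes a bilinear relation among the intersection numbers $\langle\kappa_1^m\prod_j\tau_{\alpha_j}\rangle_g$ weighted by prescribed double factorials. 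The factors of $\pi^{2i}$ produced by the zeta values are precisely what couples to an $s$-derivative: the identification $\omega\leftrightarrow 2\pi^2\kappa_1$ translates each $\pi^{2i}$ contribution into the coefficient $(-2s)^i/(2i+1)!$, after absorbing the $(2i+1)!!$-type normalisations arising from the arithmetic of the $H$-integrals. Summing over $(g,n)$ and packaging exponents as generating-function derivatives, the full recursion rewrites itself as $\mathcal V_{k_1-1}(e^{\mathbf G})=0$ for $k_1\ge 0$; the constant pieces $\delta_{k,-1}t_0^2/4$ and $\delta_{k,0}/48$ are supplied respectively by the normalisations $V_{0,3}=1$ and $V_{1,1}(L)=\pi^2/12+L^2/48$, i.e.\ by the seed values $\langle\tau_0^3\rangle_0=1$ and $\langle\tau_1\rangle_1=1/24$.

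Next I would verify the Virasoro commutator $[\mathcal V_n,\mathcal V_m]=(n-m)\mathcal V_{n+m}$ by a direct finite computation from the explicit formula for $\mathcal V_k$, independently of the geometric input. This reduces to three cross-bracket identities among the one-derivative, mixed $t_j\partial_{t_{j+k}}$, and two-derivative pieces; the $(-2s)^i$ series complicates the first two brackets but the double-factorial weights are tuned exactly for telescoping, as in the pure-$\psi$ case. Uniqueness then follows because $\mathcal V_{-1}$ and $\mathcal V_0$ encode $s$-deformed string and dilaton equations which, together with the seed values, express every $\langle\kappa_1^m\prod\tau_{\alpha_j}\rangle_g$ in terms of intersections of strictly smaller combined degree; iterating downward pins $\mathbf G$ down uniquely.

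The principal obstacle is the identification inside the middle step: matching the coefficients produced by the $H$-integral evaluations, which come naturally as even-zeta sums with double-factorial weights, to the specific series $(2(i+k)+3)!!\,(-2s)^i/(2i+1)!$ appearing in $\mathcal V_k$. I would handle this bookkeeping by first proving an intermediate lemma expressing each $\int_0^\infty x^{2j+1}H(x,t)\,dx$ as an explicit sum of monomials in $(t^2,\pi^2)$ whose coefficients exhibit the required double-factorial structure, and then pushing that identity uniformly through the moduli-integral unfolding. A second point requiring care is the compatibility of the disconnected recursion term $\caA^{dcon}$ with the bilinear $\partial^2/(\partial t_{d_1}\partial t_{d_2})$ summand of $\mathcal V_k$, since that is the mechanism by which products of genus-split generating functions get converted into second derivatives acting on $e^{\mathbf G}$ rather than on $\mathbf G$ itself.
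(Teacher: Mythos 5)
Your plan follows the same route the paper indicates for this result — the paper itself gives no proof, but cites Mulase–Safnuk and describes their argument as applying Mirzakhani's volume recursion in its full (not merely leading-coefficient) form, identifying the volume coefficients as mixed $\kappa_1$–$\psi$ intersection numbers via the normal form theorem and the kappa equation $\omega=2\pi^2\kappa_1$, and then rearranging terms into the operators $\mathcal V_k$, which is exactly your outline, including the direct commutator check and the triangular uniqueness argument. The one step you should make explicit is the promotion of the numerical coefficient relations, which a priori hold only with $2\pi^2$ in place of the formal variable $s$, to identities in $s$ itself; this is automatic because for fixed genus and fixed $\psi$-exponents the $\kappa_1$-power, and hence the accompanying power of $2\pi^2$, is forced by the dimension constraint $m+|\alpha|=3g-3+n$, so the relations attach to distinct monomials $s^m\prod_j t_j^{d_j}$ and the formal statement follows.
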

In a direct display that the intersection numbers for $\kappa_1$ and $\psi$ classes are equivalent intersection numbers for $\psi$ classes, Mulase and Safnuk show that
\[
\mathbf G(s,t_0,t_1,t_2,t_3\dots)\,=\,\mathbf F(t_0,t_1,t_2+\gamma_2,t_3+\gamma_3,\dots),
\]
where $\gamma_j=-(-s)^{j-1}/(2j+1)j!$ \cite[Thrm. 1.2]{SafMu}.  An explicit proof of the relation also comes from a formula of Faber, expressing kappa classes in terms of psi classes on moduli spaces for a greater number of punctures.
\vspace{.1in}

In his thesis \cite{Dothe}, Norman Do presents a 
\[
\mbox{generalized string equation}\qquad V_{g,n+1}(L,2\pi i)\,=\,\sum_{k=1}^n\int L_kV_{g,n}(L)\,dL_k,
\]
and 
\[  
\mbox{generalized dilaton equation}\qquad\frac{\partial V_{g,n+1}}{\partial L_{n+1}}(L,2\pi i)\,=\,2\pi i (2g-2+n)V_{g,n}(L),
\]
where on the left hand side, the value $2\pi i$ is substituted for the $(n+1)^{st}$ boundary length and $L=(L_1,\dots,L_n)$.  By Theorem \ref{first}, the second equation, for appropriate non negative multi indices $\alpha=(\alpha_1,\alpha_2,\dots,\alpha_n)$ and integers $m$, is equivalent to the relations
\[
\int_{\overline{\mathcal M}_{g,n+1}}\psi_1^{\alpha_1}\psi_2^{\alpha_2}\cdots\psi_n^{\alpha_n}\psi_{n+1}(\kappa_1-\psi_{n+1})^m=(2g-2+n)\int_{\overline{\mathcal M}_{g,n}}\psi_1^{\alpha_1}\psi_2^{\alpha_2}\cdots\psi_n^{\alpha_n}\kappa_1^m.
\]
A proof of the generalized equations is based on the pullback relations for psi and kappa classes, and general considerations for images of classes.  In particular for 
$\pi:\overline{\mathcal M}_{g,n+1}\longrightarrow \overline{\mathcal M}_{g,n}$, the morphism of forgetting the last puncture, then the classes $\widetilde\kappa_m,\ \widetilde\psi_k$ on $\overline{\mathcal M}_{g,n+1}$ and $\kappa_m,\ \psi_k$ on $\overline{\mathcal M}_{g,n}$, satisfy $\widetilde\kappa_m=\pi^*\kappa_m+\psi_{n+1}^m$ and  $\widetilde\psi_k=\pi^*\psi_k,+D_k$ (for $D_k$ the divisor of the $k^{th}$ puncture on the universal curve over $\overline{\mathcal M}_{g,n}$), \cite{HMbook}.  

A second proof of the equations is based on exact formulas for the operations in the volume recursion.  For the generalized dilaton equation, consider the following four operators acting on the ring $\mathbb C[x^2,y^2,L_1^2,\dots,L_n^2,\dots]$,
\begin{multline*}
2\frac{\partial\ }{\partial L_1}L_1[\cdot],\qquad \frac{\partial\quad}{\partial L_{n+1}}[\cdot]\bigg |_{L_{n+1}=2\pi i},\qquad \int_0^{\infty}\int_0^{\infty}xyH(x+y,L_1)[\cdot]dxdy,\\ \mbox{and}\quad \int_0^{\infty} x\big(H(x,L_1+L_k)+H(x,L_1-L_k)\big)[\cdot]dx.\qquad   
\end{multline*}
Formulas for the operators are developed.  For the proof, the generalized dilaton equation is written using the second operator, the volume recursion is applied for the left hand side, operator formulas are applied, and terms are gathered to give the right hand side.  
The generalized dilaton equation gives WP volumes for the compact case, including the following examples
\begin{align*}
&V_{2,0}=\frac{43\pi^6}{2160},  &V_{3,0}&=\frac{176557\pi^{12}}{1209600},\\
&V_{4,0}=\frac{1959225867017\pi^{18}}{493807104000}\qquad \mbox{and}
\  &V_{5,0}&=\frac{84374265930915479\pi^{24}}{355541114880000}. 
\end{align*}

We close the lecture by noting that there is extensive research on Witten's conjecture for the moduli space.  The Kontsevich and Mirzakhani approaches are analytic in nature.  Okounkov and Pandharipande \cite{OP} transformed the question to counting Hurwitz numbers, topological types of branched covers of the sphere, and used a combinatorial approach to count factorizations of permutations into transpositions. Their combinatorial approach gives Kontsevich's formula.  There is an even greater body of research on intersection numbers and relations on the moduli space \cite{HMbook}.

%% Enter subsection title between curly braces
%\nocite{Cmbbk,ImTan,WlFN,Wlsymp}
%\bibliographystyle{alpha}
%\bibliography{scottbib}

\providecommand\WlName[1]{#1}\providecommand\WpName[1]{#1}\providecommand\Wl{W%
lf}\providecommand\Wp{Wlp}\def\cprime{$'$}

% Set the ending of a LaTeX document
\end{document}